\renewcommand{\eprint}[1]{\url{#1}} 
\numberwithin{equation}{section}
\newtheorem{theorem}{Theorem}[section]
\newtheorem{lemma}[theorem]{Lemma}
\newtheorem{proposition}[theorem]{Proposition}
\newtheorem{corollary}[theorem]{Corollary}
\newtheoremstyle{remarkstyle}
{}{}{\itshape}{ }{\bfseries}{.}{ }{\thmname{#1}\thmnumber{ #2}\thmnote{ (#3)}}
\theoremstyle{remarkstyle}
\newtheorem{remark}{Remark}[section]
\newtheorem{definition}{Definition}[section]
\newcommand{\Z}{\mathbb Z}
\newcommand{\R}{\mathbb R}
\newcommand{\C}{\mathbb C}
\newcommand{\Sb}{\mathbb S}
\newcommand{\Kc}{\mathcal K}
\newcommand{\Hc}{\mathcal H}
\newcommand{\Mca}{\mathcal M}
\newcommand{\vareps}{\varepsilon}
\newcommand{\nnabla}{\slashed{\nabla}}
\DeclareMathOperator*{\opt}{opt}
\DeclareMathOperator*{\dist}{dist}
\DeclareMathOperator*{\supp}{supp}
\DeclareMathOperator*{\gamc}{{\gamma_c}}
\DeclareMathOperator*{\sigc}{{\sigma_c}}
\DeclareMathOperator*{\ima}{Im}
\DeclareMathOperator*{\rea}{Re}
\newcommand{\scal}[1]{\left\langle #1 \right\rangle}
\title[Scattering theory NLS with potential]
{Non-radial scattering theory for nonlinear Schr\"odinger equations with potential}
\author[V. D. Dinh]{Van Duong Dinh}
\address[V. D. Dinh]{Laboratoire Paul Painlev\'e UMR 8524, Universit\'e de Lille CNRS, 59655 Villeneuve d'Ascq Cedex, France
	and 
	Department of Mathematics, Ho Chi Minh City University of Education, 280 An Duong Vuong, Ho Chi Minh City, Vietnam}
\email{contact@duongdinh.com}
\subjclass[2010]{35Q44; 35Q55}
\keywords{Nonlinear Schr\"odinger equation; Kato potential; Scattering; Ground state; Concentration-compactness principle}
\begin{document}
	
	\begin{abstract}
		We consider a class of nonlinear Schr\"odinger equations with potential
		\[
		i\partial_t u +\Delta u - Vu = \pm |u|^\alpha u, \quad (t,x) \in \R \times \R^3,
		\]
		where $\frac{4}{3}<\alpha<4$ and $V$ is a Kato-type potential. We establish a scattering criterion for the equation with non-radial initial data using the ideas of Dodson-Murphy [Math. Res. Lett. 25(6):1805--1825]. As a consequence, we prove the energy scattering for the focusing problem with data below the ground state threshold. Our result extends the recent works of Hong [Commun. Pure Appl. Anal. 15(5):1571--1601] and Hamano-Ikeda [J. Evol. Equ. 2019]. We also study long time dynamics of global solutions to the focusing problem with data at the ground state threshold.
	\end{abstract}

	\maketitle

	\section{Introduction}
	\label{S1}
	\setcounter{equation}{0}
	We consider the Cauchy problem for a class of nonlinear Schr\"odinger equations with potential
	\begin{equation} \label{NLS-V}
	\left\{ 
	\begin{array}{rcl}
	i\partial_t u +\Delta u - Vu &=& \pm |u|^\alpha u, \quad (t,x) \in \R \times \R^3, \\
	u(0,x)&=& u_0(x),
	\end{array}
	\right.
	\end{equation}
	where $u: \mathbb{R} \times \mathbb{R}^3 \rightarrow \mathbb{C}$, $u_0: \mathbb{R}^3 \rightarrow \mathbb{C}$, $\frac{4}{3}<\alpha<4$, and $V$ is a real-valued potential. The range $\frac{4}{3}<\alpha<4$ is referred to the intercritical case which corresponds to the mass-supercritical and energy-subcritical case in three dimensions. The plus (resp. minus) sign in front of the nonlinearity corresponds to the defocusing (resp. focusing) case. In this paper, the potential $V: \R^3 \rightarrow \R$ is assumed to satisfy the following assumptions:
	\begin{align} \label{cond-1-V}
	V \in \mathcal{K} \cap L^{\frac{3}{2}}
	\end{align}
	and
	\begin{align} \label{cond-2-V}
	\|V_-\|_{\mathcal{K}} <4\pi,
	\end{align}
	where $\mathcal{K}$ is a class of Kato potentials with
	\[
	\|V\|_{\mathcal{K}} := \sup_{x\in \R^3} \int_{\R^3} \frac{|V(y)|}{|x-y|} dy
	\]
	and $V_-(x) := \min \{V(x),0\}$ is the negative part of $V$. 
	\begin{remark} \label{rem-exam-V}
		A typical example of potentials satisfying \eqref{cond-1-V} and \eqref{cond-2-V} is the following Yukawa-type potential
		\begin{align} \label{exam-V}
		V(x)= c |x|^{-\sigma} e^{-a|x|}, \quad c \in \R, \quad \sigma \in (0,2), \quad a>0.
		\end{align}
		The genuine Yukawa potential corresponds to $\sigma=1$. Nonlinear Schr\"odinger equation with Yukawa potential appears in a model describing the interaction between a meson field and a fermion field (see e.g., \cite{Yukawa}). We will see in Appendix that
		\begin{align} \label{norm-Lq-V}
		\|V\|_{L^q} = |c| \left[4\pi (aq)^{q\sigma-3} \Gamma(3-q\sigma)\right]^{\frac{1}{q}}
		\end{align}
		and
		\begin{align} \label{norm-K-V}
		\|V\|_{\Kc} = 4\pi |c| a^{2-\sigma} \Gamma(2-\sigma),
		\end{align}
		where $\Gamma$ is the Gamma function.
	\end{remark}
	By the assumptions \eqref{cond-1-V} and \eqref{cond-2-V}, it is known (see e.g., \cite{Hong}) that the operator $\Hc:= -\Delta +V$ has no eigenvalues, and the Schr\"odinger operator $e^{-it\Hc}$ enjoys dispersive and Strichartz estimates. Moreover, the Sobolev norms $\|\Lambda f\|_{L^2}$ and $\|\nabla f\|_{L^2}$ are equivalent, where
	\begin{align} \label{sobo-norm-L2}
	\|\Lambda f\|^2_{L^2} := \int |\nabla f|^2 dx + \int V |f|^2 dx.
	\end{align}
	Thanks to Strichartz estimates, it was shown in \cite{HI, Hong} that the Cauchy problem \eqref{NLS-V} is locally well-posed in $H^1$. In addition, local solutions satisfy the conservation of mass and energy
	\begin{align*}
	M(u(t))&:= \int |u(t,x)|^2 dx = M(u_0), \tag{Mass} \\
	E(u(t))&:= \frac{1}{2} \int |\nabla u(t,x)|^2 + \frac{1}{2} \int V(x) |u(t,x)|^2 dx \pm \frac{1}{\alpha+2} \int |u(t,x)|^{\alpha+2} dx = E(u_0). \tag{Energy}
	\end{align*}
	The main purpose of this paper is to study the energy scattering with non-radial data for \eqref{NLS-V}. 
	\begin{definition} [Energy scattering] A global solution $u \in C(\R,H^1)$ to \eqref{NLS-V} is said to be scattering in $H^1$ forward in time (resp. backward in time) if there exists $u_+\in H^1$ (resp. $u_-\in H^1$) such that
		\[
		\lim_{t\rightarrow +\infty} \|u(t)-e^{-it\Hc} u_+\|_{H^1} =0 \quad \left(\text{resp.} \lim_{t\rightarrow -\infty} \|u(t)-e^{-it\Hc} u_-\|_{H^1}=0 \right).
		\]
	\end{definition} 
	\subsection{Known results}
	Before stating our results, let us recall some known results related to the energy scattering for nonlinear Schr\"odinger equations (NLS) without potential, namely the equation
	\begin{equation} \label{NLS}
	\left\{ 
	\begin{array}{rcl}
	i\partial_t u +\Delta u &=& \pm |u|^\alpha u, \quad (t,x) \in \R \times \R^3, \\
	u(0,x)&=& u_0(x).
	\end{array}
	\right.
	\end{equation}
	It is well-known that \eqref{NLS} is locally well-posed in $H^1$. Moreover, local solutions satisfy the conservation laws of mass and energy
	\begin{align}
	M(u(t))&:= \int |u(t,x)|^2 dx = M(u_0), \nonumber \\
	E_0(u(t))&:=\frac{1}{2} \int |\nabla u(t,x)|^2 dx  \pm \frac{1}{\alpha+2} \int |u(t,x)|^{\alpha+2} dx = E_0(u_0). \label{defi-E0}
	\end{align}
	The equation \eqref{NLS} also satisfies the scaling invariance
	\begin{align} \label{scaling}
	u_\lambda(t,x):=\lambda^{\frac{2}{\alpha}} u(\lambda^2 t, \lambda x), \quad \lambda>0.
	\end{align}
	A direct computation gives
	\[
	\|u_\lambda(0)\|_{\dot{H}^\gamma} =\lambda^{\gamma-\frac{3}{2} +\frac{2}{\alpha}} \|u_0\|_{\dot{H}^\gamma},
	\]
	where $\dot{H}^\gamma$ denotes the homogeneous Sobolev space of order $\gamma$. This shows that the scaling \eqref{scaling} leaves $\dot{H}^{\gamc}$-norm of the initial data invariant, where
	\begin{align} \label{defi-gamc}
	\gamc:=\frac{3}{2}-\frac{2}{\alpha}.
	\end{align}
	We also define the critical exponent
	\begin{align} \label{defi-sigc}
	\sigc:=\frac{1-\gamc}{\gamc}=\frac{4-\alpha}{3\alpha-4}.
	\end{align}
	The energy scattering for \eqref{NLS} in the defocusing case was first established by Ginibre-Velo \cite{GV}. The proof was later simplified by Tao-Visan-Zhang \cite{TVZ}. 
	\begin{theorem}[\cite{GV,TVZ}] Let $\frac{4}{3}<\alpha<4$ and $u_0 \in H^1$. Then the corresponding solution to the defocusing problem \eqref{NLS} exists globally in time and scatters in $H^1$ in both directions.		
	\end{theorem}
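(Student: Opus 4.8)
The plan is to run the now-standard ``Morawetz estimate plus Strichartz bootstrap'' scheme; this is precisely the argument by which \cite{TVZ} streamlined the original iteration of \cite{GV}. The first, easy, step is global existence. In the defocusing case the energy $E_0(u(t)) = \frac12\int|\nabla u(t)|^2\,dx + \frac{1}{\alpha+2}\int|u(t)|^{\alpha+2}\,dx$ is a sum of nonnegative terms, so $\|\nabla u(t)\|_{L^2}^2 \le 2E_0(u_0)$ throughout the maximal interval of existence; combined with mass conservation this gives $\sup_t\|u(t)\|_{H^1}\le C(M(u_0),E_0(u_0))$, and the $H^1$ blow-up alternative from the local theory then forces the solution to be global.

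For scattering, the engine is an a priori global spacetime bound. I would invoke the three-dimensional interaction Morawetz inequality
\[
\int_{\R}\int_{\R^3}|u(t,x)|^4\,dx\,dt \;\lesssim\; \Big(\sup_{t\in\R}\|u(t)\|_{\dot H^{1/2}}\Big)^{2}\Big(\sup_{t\in\R}\|u(t)\|_{L^2}\Big)^{2},
\]
which follows from the (sign-definite) second time derivative of the two-particle correlation $\iint |u(t,x)|^2|u(t,y)|^2|x-y|\,dx\,dy$, a tensorized version of the Lin--Strauss Morawetz identity used in \cite{GV}. Interpolating $\|u\|_{\dot H^{1/2}}\le\|u\|_{L^2}^{1/2}\|u\|_{\dot H^1}^{1/2}$ and feeding in the conserved mass and energy together with the uniform $H^1$ bound just obtained yields $\|u\|_{L^4_tL^4_x(\R\times\R^3)}\le C(M(u_0),E_0(u_0))<\infty$.

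The heart of the proof is to promote this one a priori bound to a full set of $\dot H^1$--level Strichartz estimates, $\|u\|_{S^1(\R)}<\infty$, where $S^1$ is the intersection over admissible pairs $\big(\frac2q+\frac3r=\frac32\big)$ of the norms $\|\nabla\,\cdot\,\|_{L^q_tL^r_x}$ together with $\|\,\cdot\,\|_{L^\infty_tH^1}$. The mechanism is a stability argument: since the global $L^4_{t,x}$ norm is finite, one partitions $\R$ into finitely many intervals $I_1,\dots,I_J$ on which $\|u\|_{L^4_{t,x}(I_j\times\R^3)}\le\eta$ with $\eta$ small, and on each $I_j$ one runs a Strichartz estimate controlling the Duhamel contribution of $|u|^\alpha u$ --- via H\"older in spacetime and the fractional Leibniz rule --- by $\|u\|_{L^4_{t,x}(I_j)}^{\theta}\,\|u\|_{S^1(I_j)}^{1+\alpha-\theta}$ for a suitable $\theta>0$; smallness of $\eta$ then absorbs the $S^1$ factor by a continuity argument and gives $\|u\|_{S^1(I_j)}\le C(M(u_0),E_0(u_0))$, whence summing over the finitely many $j$ produces the global bound. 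I expect this to be the main obstacle: one must choose the spacetime exponents so that a genuine positive power of the $L^4_{t,x}$ norm can be extracted while every remaining factor stays inside an $H^1$-admissible Strichartz space, and this is exactly what confines the scheme to the open intercritical range $\frac43<\alpha<4$ (equivalently $0<\gamc<1$) --- at $\alpha=\frac43$ the nonlinearity is mass-critical and too rough for an $H^1$-subcritical iteration, while at $\alpha=4$ the energy-critical derivative loss is unaffordable.

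Granted $\|u\|_{S^1(\R)}<\infty$, scattering is routine. From the Duhamel formula $e^{-it\Delta}u(t)=u_0-i\int_0^t e^{-is\Delta}(|u|^\alpha u)(s)\,ds$, the finiteness of the global Strichartz norms makes the dual Strichartz norm of $|u|^\alpha u$ over $[T,\infty)$ tend to $0$ as $T\to\infty$, so the integral converges in $H^1$; setting $u_+:=u_0-i\int_0^\infty e^{-is\Delta}(|u|^\alpha u)(s)\,ds\in H^1$ gives $\|u(t)-e^{it\Delta}u_+\|_{H^1}\to0$ as $t\to+\infty$, and the backward statement follows identically by time reversal, with uniqueness of $u_\pm$ immediate from the convergence.
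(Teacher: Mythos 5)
Your proposal is correct and follows exactly the route the paper itself indicates for this cited result: global existence from the coercive conserved energy, the interaction Morawetz bound $\|u\|_{L^4(\R\times\R^3)}\leq C(M,E_0)$, and the standard upgrade to global Strichartz bounds and scattering as in \cite{GV,TVZ}. No discrepancies to report.
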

	The proof of this result is based on an a priori global bound $\|u\|_{L^4(\R\times \R^3)} \leq C(M,E_0)<\infty$ which is a consequence of interaction Morawetz estimates. We refer the reader to \cite{GV, TVZ} for more details. 

	In the focusing case, it is well-known that \eqref{NLS} admits a global non-scattering solution of the form $u(t,x)=e^{itx} Q(x)$, where 
	$Q$ is the unique positive radial solution to 
	\begin{align} \label{ell-equ}
	-\Delta Q + Q- |Q|^\alpha Q=0.
	\end{align}
	The energy scattering for the focusing problem \eqref{NLS} was first proved by Holmer-Roudenko \cite{HR} with $\alpha=2$ and radially symmetric initial data. The radial assumption was later removed by Duyckaerts-Holmer-Roudenko \cite{DHR}. Extensions of this result to any dimensions $N\geq 1$ and the whole range of the intercritical case were done by Cazenave-Fang-Xie \cite{CFX}, Akahori-Nawa \cite{AN} and Guevara \cite{Guevara}. 
	\begin{theorem}[\cite{AN, CFX, DHR, Guevara, HR}]
		Let $\frac{4}{3}<\alpha<4$. Let $u_0 \in H^1$ satisfy
		\begin{align*}
		E_0(u_0)[M(u_0)]^{\sigc} &< E_0(Q) [M(Q)]^{\sigc}, \\
		\|\nabla u_0\|_{L^2} \|u_0\|^{\sigc}_{L^2} &<\|\nabla Q\|_{L^2} \|Q\|^{\sigc}_{L^2}.
		\end{align*}
		Then the corresponding solution to the focusing problem \eqref{NLS} exists globally in time and scatters in $H^1$ in both directions.
	\end{theorem}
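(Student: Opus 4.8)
The plan is to follow the Kenig--Merle concentration--compactness and rigidity scheme, in the form used by Duyckaerts--Holmer--Roudenko \cite{DHR} and extended in \cite{AN,CFX,Guevara}, reducing the non-radial rigidity to a localized virial estimate; at the very end I record the alternative route --- closer to the method developed later in this paper --- through the Dodson--Murphy scattering criterion. \emph{Step 1 (variational analysis and globalization).} Let $C_{\mathrm{GN}}$ be the sharp constant in the Gagliardo--Nirenberg inequality $\|f\|_{L^{\alpha+2}}^{\alpha+2}\le C_{\mathrm{GN}}\|\nabla f\|_{L^2}^{3\alpha/2}\|f\|_{L^2}^{2-\alpha/2}$, attained by the ground state $Q$ of \eqref{ell-equ}; the Pohozaev identities for \eqref{ell-equ} give $\|Q\|_{L^2}^2=\tfrac{4-\alpha}{3\alpha}\|\nabla Q\|_{L^2}^2$ and $\|Q\|_{L^{\alpha+2}}^{\alpha+2}=\tfrac{2(\alpha+2)}{3\alpha}\|\nabla Q\|_{L^2}^2$. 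With these one recasts the hypotheses as $\mathcal{ME}(u_0):=E_0(u_0)M(u_0)^{\sigc}/(E_0(Q)M(Q)^{\sigc})<1$ and $\mathcal{G}(u_0):=\|\nabla u_0\|_{L^2}\|u_0\|_{L^2}^{\sigc}/(\|\nabla Q\|_{L^2}\|Q\|_{L^2}^{\sigc})<1$, both quantities being invariant under the scaling \eqref{scaling}. Since $M$ and $E_0$ are conserved, a continuity/bootstrap argument keeps $\mathcal{G}(u(t))<1$ on the whole maximal interval, so $\|\nabla u(t)\|_{L^2}$ stays uniformly bounded and (with $\|u(t)\|_{L^2}$ conserved) $u$ is global in $H^1$; moreover $\mathcal{G}(u(t))\le 1-\delta$ for some $\delta>0$, which upgrades, again through the sharp Gagliardo--Nirenberg inequality, to the uniform coercivity $8\|\nabla u(t)\|_{L^2}^2-\tfrac{12\alpha}{\alpha+2}\|u(t)\|_{L^{\alpha+2}}^{\alpha+2}\ge c\,\|\nabla u(t)\|_{L^2}^2>0$, i.e.\ positivity of the virial functional, which is what powers the rigidity step.

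\emph{Step 2 (critical element).} Call $u_0$ admissible if it satisfies both hypotheses. Assume scattering fails for some admissible datum and let $\mu_0$ be the infimum of $E_0(u_0)M(u_0)^{\sigc}$ over admissible data for which it fails. Combining a linear profile decomposition in $H^1$ organized along the $\dot H^{\gamc}$-critical scaling, the associated nonlinear profiles, the small-data scattering theory, the long-time perturbation (stability) lemma for \eqref{NLS}, and conservation of $\mathcal{ME}$, one extracts a critical solution $u_c$: global, non-scattering (say forward in time), with $E_0(u_c)M(u_c)^{\sigc}=\mu_0$, and whose orbit is precompact in $H^1$ modulo translations --- there is $x(t)\in\R^3$ with $\{u_c(t,\cdot+x(t)):t\ge 0\}$ precompact in $H^1$. (No scaling parameter is tracked: at $H^1$ regularity the soliton scale is pinned by $M$ and $E_0$.) By conservation of momentum and the Galilean invariance of \eqref{NLS} one may further normalize $\int\ima(\overline{u_c}\nabla u_c)\,dx=0$.

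\emph{Step 3 (rigidity).} One first proves $|x(t)|=o(t)$ as $t\to\infty$ by combining precompactness of the orbit with the zero-momentum normalization. Choose $\varphi$ smooth, radial and bounded, with $\varphi(x)=|x|^2$ for $|x|\le 1$, set $\varphi_R(x):=R^2\varphi(x/R)$ and $z_R(t):=\int\varphi_R(x)|u_c(t,x)|^2\,dx$. Given $t_1$ large, pick $R=R(t_1)$ with $\sup_{[0,t_1]}|x(t)|\ll R\ll t_1$; by precompactness the mass of $u_c(t)$, for $t\in[0,t_1]$, concentrates up to an arbitrarily small amount inside $\{|x|\le R\}$, so the error terms in $z_R''(t)$ are negligible and Step 1 gives $z_R''(t)\ge c'>0$ on $[0,t_1]$ (here $c'>0$ because precompactness and conservation of mass force $\inf_t\|\nabla u_c(t)\|_{L^2}>0$). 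Integrating, $z_R'(t_1)-z_R'(0)\ge c' t_1$, whereas $|z_R'(t)|\lesssim R\,\|u_c(t)\|_{L^2}\|\nabla u_c(t)\|_{L^2}\lesssim R=o(t_1)$; letting $t_1\to\infty$ yields a contradiction. Hence no critical element exists, so every admissible solution scatters forward in time, and the time-reversal symmetry $u(t,x)\mapsto\overline{u(-t,x)}$ gives scattering backward in time as well.

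\emph{Main obstacle and an alternative.} The genuinely hard point is Step 3 in the non-radial case: for radial data one may take $x(t)\equiv 0$ and a fixed-center localized virial closes the argument immediately, but in general the translation parameter must be controlled, and establishing $|x(t)|=o(t)$ --- via precompactness of $\{u_c(t,\cdot+x(t))\}$, the reduction to zero momentum, and a careful split of the virial into a core piece near $x(t)$ and a small tail --- is the technical heart of \cite{DHR}. An approach that bypasses concentration--compactness altogether, and runs parallel to the method developed later in this paper, is that of Dodson--Murphy: prove a scattering criterion asserting that a global $H^1$-bounded solution which satisfies the uniform coercivity of Step 1 and for which $\int_{|x|\le R}|u(t_n,x)|^2\,dx\to 0$ along some sequence $t_n\to\infty$ must scatter forward in time, and then verify this criterion from a Lin--Strauss/interaction Morawetz estimate with radial weight, which under the sub-threshold coercivity furnishes $\int_0^\infty\!\!\int_{|x|\le R}|x|^{-1}|u(t,x)|^{\alpha+2}\,dx\,dt<\infty$ and hence the required local-mass smallness along a sequence of times.
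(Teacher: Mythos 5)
Your proposal follows exactly the concentration--compactness--rigidity scheme of Kenig--Merle as implemented in \cite{DHR, CFX, AN, Guevara}, which is precisely the route the paper attributes to this theorem (the paper does not reprove it, but summarizes the same three steps: variational analysis, extraction of a minimal non-scattering element via profile decomposition, and rigidity via a localized virial argument with control of the translation parameter). Your sketch, including the sharp Gagliardo--Nirenberg/Pohozaev bookkeeping, the coercivity $8\|\nabla u\|_{L^2}^2-\tfrac{12\alpha}{\alpha+2}\|u\|_{L^{\alpha+2}}^{\alpha+2}\gtrsim\|\nabla u\|_{L^2}^2$, and the $|x(t)|=o(t)$ reduction under the zero-momentum normalization, is consistent with that cited proof.
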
 
	The proof of this result is based on the concentration-compactness-rigidity argument of Kenig-Merle \cite{KM}. It consists of three main steps: variational analysis, existence of the minimal blow-up solution via the profile decomposition, and rigidity argument. This method is robust and has been applied to show the energy scattering for various nonlinear Schr\"odinger-type equations. 
	
	Concerning the energy scattering for \eqref{NLS-V}, Hong \cite{Hong} made use of the concentration-compactness-rigidity argument of Colliander-Keel-Staffilani-Takaoka-Tao \cite{CKSTT} and Kenig-Merle \cite{KM} to show the energy scattering for the cubic nonlinearity, i.e. $\alpha=2$. More precisely, he proved the following result.
	\begin{theorem}[\cite{Hong}]  \label{theo-Hong}
		Let $\alpha=2$.
		
		\noindent $\bullet$ (The focusing case) Let $V:\R^3 \rightarrow \R$ satisfy \eqref{cond-1-V}, $V\geq 0$, $x\cdot \nabla V \leq 0$, and $x\cdot \nabla V \in L^{\frac{3}{2}}$. Let $u_0 \in H^1$ satisfy
			\begin{align} 
			E(u_0) M(u_0) &<E_0(Q) M(Q), \label{cond-1-u0-Hong} \\
			\|\Lambda u_0\|_{L^2} \|u_0\|_{L^2} &<\|\nabla Q\|_{L^2} \|Q\|_{L^2}, \label{cond-2-u0-Hong}
			\end{align}
			where $\|\Lambda u_0\|_{L^2}$ is defined as in \eqref{sobo-norm-L2} and $E_0(Q)$ is as in \eqref{defi-E0}. Then the corresponding solution to the focusing problem \eqref{NLS-V} exists globally in time and scatters in $H^1$ in both directions.
			
		\noindent $\bullet$ (The defocusing case) Let $V:\R^3 \rightarrow \R$ satisfy \eqref{cond-1-V}, \eqref{cond-2-V}, and $\|(x\cdot \nabla V)_+\|_{\mathcal{K}} <4\pi$. Let $u_0 \in H^1$. Then the corresponding solution to the defocusing problem \eqref{NLS-V} exists globally in time and scatters in $H^1$ in both directions.
	\end{theorem}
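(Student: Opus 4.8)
The plan is to run the concentration--compactness--rigidity scheme of Kenig--Merle \cite{KM}, in the variant of Colliander--Keel--Staffilani--Takaoka--Tao \cite{CKSTT}, replacing the free flow $e^{it\Delta}$ by the perturbed flow $e^{-it\Hc}$. The local theory is furnished by the hypotheses on $V$: under \eqref{cond-1-V}--\eqref{cond-2-V} the operator $\Hc=-\Delta+V$ has no eigenvalues, $e^{-it\Hc}$ obeys the same dispersive and Strichartz estimates as $e^{it\Delta}$, and $\|\Lambda\cdot\|_{L^2}\simeq\|\nabla\cdot\|_{L^2}$; these yield $H^1$ well-posedness, a small-data scattering criterion in Strichartz norms, and a long-time-perturbation (stability) lemma. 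One then introduces, in the focusing case, the threshold $\delta_c$ defined as the supremum of those $\delta>0$ such that every $H^1$ datum with $E(u_0)M(u_0)<\delta$ satisfying \eqref{cond-2-u0-Hong} yields a solution scattering in both directions, and argues by contradiction that $\delta_c\ge E_0(Q)M(Q)$; in the defocusing case one similarly shows $\delta_c=+\infty$. Preceding this, one carries out the \emph{variational analysis} (only in the focusing case): using the sharp Gagliardo--Nirenberg inequality, the Pohozaev identities for $Q$, and $V\ge0$ (so that $\|\nabla u\|_{L^2}\le\|\Lambda u\|_{L^2}$), the conditions \eqref{cond-1-u0-Hong}--\eqref{cond-2-u0-Hong} are propagated along the flow, giving a uniform $H^1$ bound (hence global existence) together with the coercivity of the virial functional, $8\|\nabla u(t)\|_{L^2}^2-6\|u(t)\|_{L^4}^4\ge c\,\|\nabla u(t)\|_{L^2}^2$ uniformly in $t$ for some $c>0$.

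The heart of the proof is the construction of a \emph{critical element}. The essential ingredient is a linear profile decomposition for bounded sequences in $H^1$ adapted to $e^{-it\Hc}$. Since $V\in L^{3/2}\cap\Kc$ decays in an averaged sense yet breaks translation invariance, the profiles split into two families: those whose spatial translation parameters stay bounded, whose associated nonlinear profiles solve \eqref{NLS-V} itself, and those whose translations run to spatial infinity, where $V$ is negligible and whose nonlinear profiles are governed by the potential-free equation \eqref{NLS}. Feeding this decomposition, the stability lemma, and the known scattering theory for \eqref{NLS} recalled above (which also pins the threshold $E_0(Q)M(Q)$ in the focusing case) into the standard contradiction argument produces, should $\delta_c$ fall short, a nonzero global solution $u_c$ with $E(u_c)M(u_c)=\delta_c$ that does not scatter and whose orbit $\{u_c(t):t\in\R\}$ is precompact in $H^1$; note that no translation parameter enters this compactness statement, because the potential anchors the concentration point.

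The \emph{rigidity} step rules out such a $u_c$ via a truncated virial identity. Taking a smooth radial cutoff $\chi_R$ with $\chi_R(x)=|x|^2$ on $|x|\le R$ and differentiating $\int\chi_R|u_c(t)|^2\,dx$ twice gives $8\|\nabla u_c\|_{L^2}^2-6\|u_c\|_{L^4}^4-4\int(x\cdot\nabla V)|u_c|^2\,dx$ in the focusing case, resp.\ $8\|\nabla u_c\|_{L^2}^2+6\|u_c\|_{L^4}^4-4\int(x\cdot\nabla V)|u_c|^2\,dx$ in the defocusing case, up to error terms supported in $\{|x|\gtrsim R\}$. In the focusing case $x\cdot\nabla V\le0$ makes the potential term nonnegative, and, using $x\cdot\nabla V\in L^{3/2}$ together with the precompactness of the orbit, the errors are $o_R(1)$ uniformly in $t$; combined with the coercivity of the virial functional and the fact that a nonzero precompact orbit keeps $\|\nabla u_c(t)\|_{L^2}$ bounded below, the second derivative exceeds a fixed positive constant once $R$ is large, contradicting $0\le\int\chi_R|u_c|^2\le R^2M(u_0)$. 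In the defocusing case the nonlinear term already has the favourable sign, and the Kato bound $\int|W||f|^2\,dx\le\frac1{4\pi}\|W\|_{\Kc}\|\nabla f\|_{L^2}^2$ together with $\|(x\cdot\nabla V)_+\|_{\Kc}<4\pi$ gives $8\|\nabla u_c\|_{L^2}^2-4\int(x\cdot\nabla V)|u_c|^2\,dx\ge4\|\nabla u_c\|_{L^2}^2>0$, again forcing a contradiction unless $u_c\equiv0$.

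I expect the main obstacle to be the profile decomposition adapted to $e^{-it\Hc}$ and the ensuing nonlinear-profile analysis: one must show carefully that, after translating an escaping profile, the potential term genuinely vanishes in the limit so that the free NLS scattering theory applies with the correct mass--energy threshold, establish the asymptotic orthogonality of the nonlinear flows, and patch them into an approximate solution controlled by the stability lemma. A secondary, more technical, difficulty lies in the rigidity step, where the limited integrability of $V$ and of $x\cdot\nabla V$ (only $L^{3/2}$) must be balanced against the $H^1$-precompactness of the critical orbit in order to render the truncation errors negligible.
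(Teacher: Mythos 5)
This theorem is quoted from Hong \cite{Hong}; the paper does not reprove it, noting only that the proof is the concentration--compactness--rigidity scheme of \cite{CKSTT,KM} with a linear profile decomposition adapted to $e^{-it\Hc}$, the potential being handled perturbatively in a suitable Strichartz norm. Your outline --- local theory from the dispersive/Strichartz estimates for $e^{-it\Hc}$ and the norm equivalence, variational coercivity below the threshold, a critical element built from profiles with bounded versus escaping translation parameters (the escaping ones governed by the free NLS), and rigidity via a truncated virial identity using $x\cdot\nabla V\le 0$ in the focusing case and the Kato bound with $\|(x\cdot\nabla V)_+\|_{\Kc}<4\pi$ in the defocusing case --- is essentially that same approach, and it is correct at the level of detail given.
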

	The proof of this result depends heavily on linear profile decomposition. However, due to the lack of translation invariance for both linear and nonlinear equations caused by the potential, showing the linear profile decomposition is more involved. To overcome the difficulty, Hong considered the potential as a perturbation of the linear equation, and chose a suitable Strichartz norm to make the error small. We refer the reader to \cite{Hong} for more details.

	Recently, Hamano-Ikeda \cite{HI} extended Hong's result to the whole range of the intercritical case, i.e. $\frac{4}{3}<\alpha<4$ and radially symmetric initial data. More precisely, they proved the following result.
	\begin{theorem}[\cite{HI}] \label{theo-HI}
		Let $\frac{4}{3}<\alpha<4$. Let $V:\R^3 \rightarrow \R$ be radially symmetric satisfying \eqref{cond-1-V}, $V\geq 0$, $x\cdot \nabla V \leq 0$, and $x \cdot \nabla V \in L^{\frac{3}{2}}$. Let $u_0 \in H^1$ be radially symmetric satisfying
		\begin{align} 
		E(u_0) [M(u_0)]^{\sigc} &< E_0(Q) [M(Q)]^{\sigc}, \label{cond-1-HI} \\
		\|\nabla u_0\|_{L^2} \|u_0\|^{\sigc}_{L^2} &< \|\nabla Q\|_{L^2} \|Q\|^{\sigc}_{L^2}. \label{cond-2-HI}
		\end{align}
		Then the corresponding solution to the focusing problem \eqref{NLS-V} exists globally in time and scatters in $H^1$ in both directions.
	\end{theorem}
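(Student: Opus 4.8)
The plan is to adapt the Dodson--Murphy scheme to the operator $\Hc=-\Delta+V$: first reduce energy scattering to one scattering criterion, then verify that criterion via a truncated virial/Morawetz inequality, using crucially the radial symmetry of $u_0$ and the sign conditions $V\ge0$, $x\cdot\nabla V\le0$. \emph{Variational step.} Since $V\ge0$ we have $E(u)\ge E_0(u)$, so \eqref{cond-1-HI}--\eqref{cond-2-HI} force the free-energy subthreshold conditions at $t=0$. I would then use the sharp Gagliardo--Nirenberg inequality $\|f\|_{L^{\alpha+2}}^{\alpha+2}\le C_{\mathrm{GN}}\|\nabla f\|_{L^2}^{\frac{3\alpha}{2}}\|f\|_{L^2}^{\frac{4-\alpha}{2}}$ (whose optimiser is the free ground state $Q$), together with mass--energy conservation and $\|\nabla u\|_{L^2}\le\|\Lambda u\|_{L^2}$, to deduce by the usual continuity argument that $u$ is global, that $\sup_t\|u(t)\|_{H^1}<\infty$, that
\[
\|\nabla u(t)\|_{L^2}\,\|u(t)\|_{L^2}^{\sigc}\le(1-\delta)\,\|\nabla Q\|_{L^2}\,\|Q\|_{L^2}^{\sigc},\qquad t\in\R,
\]
for some $\delta=\delta(M(u_0),E(u_0))>0$, and (exactly as in the potential-free case) the coercive bound $8\|\nabla u(t)\|_{L^2}^2-\frac{12\alpha}{\alpha+2}\|u(t)\|_{L^{\alpha+2}}^{\alpha+2}\ge\delta'\|\nabla u(t)\|_{L^2}^2$ for all $t$, as well as $\|u(t)\|_{L^{\alpha+2}}^{\alpha+2}\lesssim\|\nabla u(t)\|_{L^2}^2$ from the same inequalities and the uniform $H^1$ bound.

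\emph{Scattering criterion.} Working in Strichartz spaces for $e^{-it\Hc}$ and treating $V\in\Kc\cap L^{3/2}$ perturbatively, exactly as in the local theory of \cite{HI,Hong}, I would prove: there exist $\vareps>0$ and $R>0$, depending only on $M(u_0)$ and $E(u_0)$, such that if
\[
\liminf_{t\to+\infty}\int_{|x|\le R}|u(t,x)|^2\,dx\le\vareps^2,
\]
then $u$ scatters forward in time in $H^1$. The argument is the Dodson--Murphy/Tao one: on $[T,+\infty)$ write $u$ by Duhamel based at $T$, split the Duhamel forcing into a part concentrated near time $T$ (controlled by the local-mass smallness via H\"older and the radial Sobolev estimate) and a tail (controlled by the $L^1\to L^\infty$ dispersive decay of $e^{-it\Hc}$), and then close a small-data continuation argument in a suitable Strichartz norm.

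\emph{Morawetz estimate and conclusion.} For $R\gg1$ I would take $\varphi_R(x)=R^2\varphi(x/R)$ with $\varphi$ smooth, radial, $\varphi(r)=r^2$ for $r\le1$, $0\le\varphi''\le2$, and set $Z_R(t)=2\,\ima\int\bar u(t)\,\nabla\varphi_R\cdot\nabla u(t)\,dx$. Differentiating along \eqref{NLS-V} yields
\[
\frac{d}{dt}Z_R(t)=8\|\nabla u\|_{L^2}^2-\frac{12\alpha}{\alpha+2}\|u\|_{L^{\alpha+2}}^{\alpha+2}-4\int x\cdot\nabla V\,|u|^2\,dx+\mathcal{E}_R(t),
\]
where $\mathcal{E}_R(t)$ collects the contributions of $\varphi_R-|x|^2$ (supported in $|x|\gtrsim R$) and the discrepancy $-2\int(\nabla\varphi_R-2x)\cdot\nabla V\,|u|^2$. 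Since $x\cdot\nabla V\le0$ the potential term is $\ge0$; since $x\cdot\nabla V\in L^{3/2}$ and $\||x|u(t)\|_{L^\infty}\lesssim\|\nabla u(t)\|_{L^2}^{1/2}\|u(t)\|_{L^2}^{1/2}$ (radial Sobolev) with $\sup_t\|u(t)\|_{H^1}<\infty$, one gets $|\mathcal{E}_R(t)|=o_R(1)$ uniformly in $t$. Using $|Z_R(t)|\lesssim R\|u\|_{H^1}^2$, integrating over $[0,T]$, and invoking the coercive bound of the variational step,
\[
\frac1T\int_0^T\|u(t)\|_{L^{\alpha+2}}^{\alpha+2}\,dt\ \lesssim\ \frac1T\int_0^T\|\nabla u(t)\|_{L^2}^2\,dt\ \lesssim\ \frac{R}{T}+o_R(1).
\]
Choosing $R$ large and then letting $T\to\infty$ would produce $t_n\to+\infty$ with $\|u(t_n)\|_{L^{\alpha+2}}\to0$, hence (by H\"older on balls and the conserved mass) $\int_{|x|\le R'}|u(t_n)|^2\,dx\to0$ for every fixed $R'$; the hypothesis of the scattering criterion then holds, so $u$ scatters forward in time, and by the $t\mapsto-t$, $u\mapsto\bar u$ symmetry of \eqref{NLS-V} also backward.

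\emph{Main obstacle.} The delicate point will be controlling $\mathcal{E}_R$: the virial identity for \eqref{NLS-V} produces the extra term $-2\int\nabla\varphi_R\cdot\nabla V\,|u|^2$ whose good sign is visible only for the exact weight $|x|^2$, so for the truncation one must bound the remainder $\int(\nabla\varphi_R-2x)\cdot\nabla V\,|u|^2$, supported in $|x|\gtrsim R$, by $o_R(1)$ uniformly in time -- precisely where $V\ge0$, $x\cdot\nabla V\le0$, $x\cdot\nabla V\in L^{3/2}$ and the radial symmetry of $u_0$ are all used (a non-radial $u_0$ would destroy the uniform-in-$t$ decay of the tail terms, which is why the later non-radial result of this paper will require a different mechanism). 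A further nuisance is that \eqref{NLS-V} has no scaling symmetry, so every constant in the variational and Morawetz steps must be tracked purely in terms of $M(u_0)$ and $E(u_0)$.
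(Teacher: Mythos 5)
Your architecture (local-mass scattering criterion, variational subthreshold analysis, truncated virial/Morawetz with radial Sobolev and dispersive estimates) is the Dodson--Murphy scheme that Hamano--Ikeda follow and that the paper recalls, and your variational and scattering-criterion steps are sound. The gap is in the Morawetz step: the error $\mathcal{E}_R(t)$ in your identity is \emph{not} $o_R(1)$ uniformly in $t$. Besides the exterior nonlinear and potential discrepancies (which are indeed small, via radial Sobolev and the $L^{3/2}$ tail of $x\cdot\nabla V$), $\mathcal{E}_R$ contains the kinetic discrepancy $4\int(\varphi_R''-2)|\partial_r u|^2\,dx$ (plus its angular analogue), supported in $|x|\gtrsim R$ and of size $\|\nabla u(t)\|_{L^2(|x|\ge R)}^2$; radial Sobolev gives pointwise decay of $u$, not of $\nabla u$, and for any fixed $R$ a fixed portion of the kinetic energy eventually lives in $\{|x|\ge R\}$. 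In fact your claimed conclusion
\[
\frac1T\int_0^T\|\nabla u(t)\|_{L^2}^2\,dt\ \lesssim\ \frac{R}{T}+o_R(1)
\]
is provably false under your own hypotheses: the coercive bound gives $E(u_0)\ge c\,\|\nabla u_0\|_{L^2}^2>0$, while $E(u(t))\le \tfrac12\|\nabla u(t)\|_{L^2}^2+\tfrac12\|V\|_{L^{3/2}}\|u(t)\|_{L^6}^2\le C\|\nabla u(t)\|_{L^2}^2$, so conservation of energy forces $\|\nabla u(t)\|_{L^2}^2\ge E(u_0)/C>0$ for all $t$, and the left-hand side cannot tend to $0$ as $T\to\infty$, $R\to\infty$. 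The same objection applies to the deduced statement $\|u(t_n)\|_{L^{\alpha+2}(\mathbb{R}^3)}\to0$, which would force $E(u_0)=0$.

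The repair is exactly the step the paper attributes to Hamano--Ikeda: \emph{coercivity on sufficiently large balls}, not global coercivity. Keep the truncated virial as a one-sided bound, using $0\le\varphi_R''\le2$ (and $\varphi_R'\ge0$, which also settles the sign of the exterior potential term since $\nabla\varphi_R\cdot\nabla V=\frac{\varphi_R'(r)}{r}\,x\cdot\nabla V\le0$) to \emph{discard} the exterior kinetic contributions rather than to estimate them, so that only interior quantities appear. Then verify the subthreshold/coercivity hypothesis for the localized function $\chi_R u$: since $\|\nabla(\chi_R u)\|_{L^2}^2\le\|\nabla u\|_{L^2}^2+CR^{-2}\|u\|_{L^2}^2$ and cutting off decreases the mass, the condition $\|\nabla(\chi_R u)\|_{L^2}\|\chi_R u\|_{L^2}^{\sigma_c}\le(1-\delta/2)\|\nabla Q\|_{L^2}\|Q\|_{L^2}^{\sigma_c}$ holds for $R$ large, giving $\|\nabla(\chi_R u)\|_{L^2}^2-\frac{3\alpha}{2(\alpha+2)}\|\chi_R u\|_{L^{\alpha+2}}^{\alpha+2}\ge\nu\|\nabla(\chi_R u)\|_{L^2}^2$. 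The Morawetz estimate then yields a space-time bound on localized quantities, e.g. $\frac1T\int_0^T\int_{|x|\le R}|u|^{\alpha+2}\,dx\,dt\lesssim \frac{R}{T}+R^{-\alpha}+R^{-2}$, which is compatible with energy conservation and, along a sequence of times, gives smallness of the local potential energy and hence (by H\"older and mass conservation) of the local mass inside a fixed ball --- precisely the input your scattering criterion needs. With this substitution the rest of your plan goes through and coincides with the proof in \cite{HI}.
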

	The proof of this result is based on a recent argument of Dodson-Murphy \cite{DM} which makes use of the radial assumption. This is done by three main steps. The first one is to use nonlinear estimates to show a suitable scattering criterion. The second one is to use variational arguments to derive the coercivity on sufficiently large balls. Finally, thanks to the coercivity, Morawetz estimates, and the radial Sobolev embedding, one obtains a space time decay which implies the smallness of $L^2$-norm of the solution inside a large ball for sufficiently large time. This together with dispersive estimates imply that the global solution satisfies the scattering criterion.
	
	In the defocusing case, the energy scattering for non-radial data was proved by the first author in \cite[Theorem 1.4]{Dinh-INLS-poten}. 
	\begin{theorem} [\cite{Dinh-INLS-poten}]
		Let $\frac{4}{3}<\alpha<4$. Let $V:\R^3 \rightarrow \R$ be radially symmetric satisfying \eqref{cond-1-V}, \eqref{cond-2-V}, $x\cdot \nabla V \leq 0$, and $\partial_r V \in L^q$ for any $\frac{3}{2}\leq q\leq \infty$. Let $u_0 \in H^1$. Then the corresponding solution to the defocusing problem \eqref{NLS-V} exists globally in time and scatters in $H^1$ in both directions.
	\end{theorem}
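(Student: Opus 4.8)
The plan is to run the three–step scheme of Dodson--Murphy \cite{DM} (as used in \cite{HI}) in the \emph{defocusing, non-radial} setting, where the energy is automatically coercive so that no variational/threshold analysis is needed.

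\emph{Step 1: global existence and a uniform bound.} In the defocusing case the conserved energy is
\[
E(u(t))=\tfrac12\|\nabla u(t)\|_{L^2}^2+\tfrac12\int V|u(t)|^2\,dx+\tfrac1{\alpha+2}\|u(t)\|_{L^{\alpha+2}}^{\alpha+2}=\tfrac12\|\Lambda u(t)\|_{L^2}^2+\tfrac1{\alpha+2}\|u(t)\|_{L^{\alpha+2}}^{\alpha+2}.
\]
By \eqref{cond-1-V}--\eqref{cond-2-V} the norms $\|\Lambda\cdot\|_{L^2}$ and $\|\nabla\cdot\|_{L^2}$ are equivalent, hence $\|\nabla u(t)\|_{L^2}^2\lesssim E(u_0)$, which with mass conservation gives $\sup_{t\in\R}\|u(t)\|_{H^1}\le E<\infty$; by the $H^1$ local theory of \cite{HI,Hong} the solution is therefore global. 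It then suffices to prove scattering forward in time, the backward statement following from the time–reversal symmetry $u(t,x)\mapsto\overline{u(-t,x)}$, which preserves \eqref{NLS-V} since $V$ is real.

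\emph{Step 2: scattering criterion.} As in \cite{HI}, and independently of the sign of the nonlinearity, one combines the Duhamel formula, the dispersive and Strichartz estimates for $e^{-it\Hc}$ (valid under \eqref{cond-1-V}--\eqref{cond-2-V}) and nonlinear estimates to obtain: there exist $\varepsilon=\varepsilon(E)>0$ and $R=R(E)>0$ such that if
\[
\liminf_{t\to+\infty}\int_{|x|\le R}|u(t,x)|^2\,dx\le\varepsilon^2,
\]
then a scattering–critical space–time norm of $u$ on $[T,\infty)$ is small (hence finite) for some large $T>0$, which by the stability theory forces $u$ to scatter forward in time. It remains to verify this hypothesis.

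\emph{Step 3: Morawetz estimate and non-concentration.} Consider $M(t):=2\,\mathrm{Im}\int\overline{u(t)}\,\tfrac{x}{|x|}\cdot\nabla u(t)\,dx$, which is a priori bounded, $|M(t)|\le2\|u(t)\|_{L^2}\|\nabla u(t)\|_{L^2}\lesssim_{M,E}1$ for all $t$. A standard virial computation for \eqref{NLS-V} — justified for $H^1$ solutions by the usual regularisation of the weight $|x|$, which discards only a nonnegative Dirac contribution at the origin — gives, for some $c_\alpha>0$,
\[
\frac{d}{dt}M(t)\ \ge\ c_\alpha\int_{\R^3}\frac{|u(t,x)|^{\alpha+2}}{|x|}\,dx\ -\ 2\int_{\R^3}\partial_r V(x)\,|u(t,x)|^2\,dx\ \ge\ c_\alpha\int_{\R^3}\frac{|u(t,x)|^{\alpha+2}}{|x|}\,dx,
\]
where the first inequality already drops the nonnegative angular–derivative term, and the last uses $x\cdot\nabla V\le0$, i.e.\ $\partial_r V\le0$; the potential integral is finite since $|u|^2\in L^1\cap L^3$ while $\partial_r V\in L^{3/2}\cap L^\infty$ by the hypothesis $\partial_r V\in L^q$, $\tfrac32\le q\le\infty$. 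Integrating over $[0,T]$ and using the uniform bound on $M$ yields, uniformly in $T$,
\[
\int_0^\infty\!\!\int_{\R^3}\frac{|u(t,x)|^{\alpha+2}}{|x|}\,dx\,dt\ \le\ \frac1{c_\alpha}\Big(\sup_{t}M(t)-\inf_{t}M(t)\Big)\ \lesssim_{M,E}\ 1.
\]
Hence there is a sequence $t_n\to+\infty$ with $\int\frac{|u(t_n,x)|^{\alpha+2}}{|x|}\,dx\to0$, and by Hölder's inequality (exponents $\tfrac{\alpha+2}2$ and $\tfrac{\alpha+2}{\alpha}$)
\[
\int_{|x|\le R}|u(t_n,x)|^2\,dx\ \le\ \Big(\int_{|x|\le R}\frac{|u(t_n)|^{\alpha+2}}{|x|}\,dx\Big)^{\frac{2}{\alpha+2}}\Big(\int_{|x|\le R}|x|^{\frac{2}{\alpha}}\,dx\Big)^{\frac{\alpha}{\alpha+2}}\ \xrightarrow[n\to\infty]{}\ 0,
\]
the second integral being plainly finite since $\alpha>0$. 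Thus $\liminf_{t\to+\infty}\int_{|x|\le R}|u(t,x)|^2\,dx=0\le\varepsilon^2$ for the $R$ from Step 2, and the criterion yields forward scattering, completing the proof.

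\emph{Where the difficulty lies.} Since the problem is defocusing there is no variational step, so the heart of the matter is Step 3: one must carry out the virial computation \emph{in the presence of $V$} and check that the extra term $-2\int\partial_r V\,|u|^2$ is simultaneously well defined — exactly where $\partial_r V\in L^q$, $\tfrac32\le q\le\infty$, is used, to pair with $|u|^2\in L^1\cap L^3$ — and of the right (nonnegative) sign, which is where the repulsivity $x\cdot\nabla V\le0$ enters. Moreover, for non-radial $u_0$ one cannot use a radial Morawetz weight and must instead work with the untruncated Lin--Strauss weight $|x|$; this is admissible only because the Morawetz action $M(t)$ itself is a priori controlled by the conserved mass and energy, so no growing boundary error appears. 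Everything else is a routine transcription of the potential–free Dodson--Murphy argument, together with the linear estimates for $e^{-it\Hc}$ from \cite{Hong}.
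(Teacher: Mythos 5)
Your Steps 1 and 3 are fine: global existence is immediate from the defocusing energy, $V\geq -$(small Kato part) and the equivalence $\|\Lambda\cdot\|_{L^2}\sim\|\nabla\cdot\|_{L^2}$, and the Lin--Strauss Morawetz inequality with weight $|x|$ does hold for non-radial solutions, the potential term entering with the favorable sign since $\partial_r V\le 0$ and being finite by $\partial_r V\in L^q$. The genuine gap is Step 2. The scattering criterion you invoke --- ``there exist $\vareps(E),R(E)$ such that $\liminf_{t\to+\infty}\int_{|x|\le R}|u(t,x)|^2dx\le\vareps^2$ implies scattering'' --- is Tao's criterion as used in \cite{DM} and \cite{HI}, and it is proved there only for \emph{radially symmetric} solutions. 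Its proof must upgrade the smallness of the mass in the ball $B_R$ at a single large time $T$ to smallness of $u(T)$ in a global Lebesgue norm (in order to control the recent-past Duhamel term), and the exterior region $\{|x|\ge R\}$ is handled via the radial Sobolev embedding $|u(x)|\lesssim |x|^{-1}\|u\|_{H^1}$. In the theorem at hand only the potential $V$ is radial; $u_0\in H^1$ is arbitrary, so the solution has no radial decay and this step of the criterion's proof collapses. Nor does your Step 3 compensate: the bound on $\int\!\!\int |x|^{-1}|u|^{\alpha+2}$ only rules out concentration \emph{near the origin}, and for non-radial solutions says nothing about mass sitting at distance $\gg R$ from the origin, which is exactly the configuration the criterion cannot see. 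Asserting the criterion holds ``independently of the sign of the nonlinearity'' does not address this; the obstruction is the lack of radial symmetry of $u$, not the sign.

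This is precisely why the non-radial defocusing result is proved differently: in \cite{Dinh-INLS-poten} via an interaction (two-particle) Morawetz estimate yielding the a priori bound $\|u\|_{L^4(\R\times\R^3)}\le C(M,E)$ in the spirit of \cite{TVZ}, and in the present paper via the Dodson--Murphy non-radial scheme \cite{DM-nonrad}: the scattering criterion of Lemma \ref{lem-scat-crite} (smallness of a spacetime $L^q$ norm on long windows, with no reference to a fixed spatial ball), combined with the interaction Morawetz functional $\Mca^{\otimes 2}_R$, Galilean-shifted cutoffs $u^\xi$, and, when $V$ is radial, the auxiliary one-particle estimate of Corollary \ref{coro-mora-est-defocus} to absorb the $\partial_r V$ term. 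These tools are translation-insensitive, which is the feature your proposal is missing. To repair your argument you would either need a proof of a Tao-type criterion valid for non-radial data (not available by the cited techniques), or you should replace Steps 2--3 by one of the two interaction-Morawetz routes above.
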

	The proof of this result is based on interaction Morawetz estimates in the same spirit of \cite{TVZ}. The first step is to use interaction Morawetz estimates to show a priori global bound $\|u\|_{L^4(\R\times \R^3)} \leq C(M,E)<\infty$. This global bound combined with nonlinear estimates show the energy scattering. For more details, we prefer the reader to \cite[Appendix]{Dinh-INLS-poten}.
	
	The energy scattering for NLS with other-type of potentials have been studied in other works. For instance, Carles \cite{Carles} proved the energy scattering for a smooth real-valued potential satisfying for $\mu>2$,
	\[
	|\partial^\alpha V(x)| \leq \frac{C_\alpha}{(1+|x|)^{\mu+|\alpha|}}, \quad \forall \alpha \in \R^N
	\]
	and there exists $M=M(N, \mu)>0$ such that 
	\[
	\left(\frac{x}{|x|} \cdot \nabla V(x)\right)_+ \leq \frac{M}{(1+|x|)^{\mu+1}}, \quad \forall x\in \R^N,
	\]
	where $f_+:= \max \{0,f\}$.	Lafontaine \cite{Lafontaine} proved the energy scattering for 1D NLS with non-negative potential satisfying
	\[
	V\in L^1_1(\R), \quad V' \in L^1_1(\R), \quad x V' \leq 0,
	\]
	where
	\[
	\|V\|_{L^1_1(\R)}:= \int_{\R} |V(x)| (1+|x|) dx.
	\]
	We also refer to other related works of Banica-Visciglia \cite{BV}, Killip-Murphy-Visan-Zheng \cite{KMVZ}, Lu-Miao-Murphy \cite{LMM}, Zheng \cite{Zheng} and Forcella-Visciglia \cite{FV}. 
	
	\subsection{Main result}
	Inspiring by the aforementioned results, the purpose of this paper is to show the energy scattering for \eqref{NLS-V} with non-radially symmetric initial data. More precisely, we prove the following scattering criterion.
	
	\begin{theorem}[Scattering criterion] \label{theo-scat-crite}
		Let $\frac{4}{3}<\alpha<4$. 
		
		\noindent $\bullet$ (The focusing case) Let $V:\R^3 \rightarrow \R$ be radially symmetric satisfying \eqref{cond-1-V}, $V\geq 0$, $x\cdot \nabla V \leq 0$, and $\partial_r V \in L^q$ for any $\frac{3}{2}\leq q\leq \infty$. Let $u$ be a $H^1$-solution to the focusing problem \eqref{NLS-V} defined on the maximal forward time interval of existence $[0,T^*)$. Assume that 
			\begin{align} \label{scat-cond}
			\sup_{t\in [0,T^*)} \|u(t)\|^{\alpha+2}_{L^{\alpha+2}} \|u(t)\|^{2\sigc}_{L^2} < \|Q\|^{\alpha+2}_{L^{\alpha+2}} \|Q\|^{2\sigc}_{L^2}.
			\end{align}
			Then the solution exists globally in time, i.e. $T^*<\infty$, and scatters in $H^1$ forward in time. A similar result holds for the negative times. 
						
		\noindent $\bullet$ (The defocusing case) Let $V:\R^3 \rightarrow \R$ satisfy \eqref{cond-1-V}, \eqref{cond-2-V}, and 
			\begin{align} \label{cond-3-V}
			\left\{
			\begin{array}{l}
			\text{either } V \text{ be radially symmetric}, x \cdot \nabla V \leq 0, \text{ and } \partial_r V \in L^q \text{ for any } \frac{3}{2} \leq q \leq \infty;\\
			\text{or } V \text{ be non-radially symmetric}, x \cdot \nabla V \in L^{\frac{3}{2}}, x \cdot \nabla V \leq 0, \text{ and } \nabla^2 V \text{ be non-positive definite.}
			\end{array}
			\right.
			\end{align}
			Let $u_0 \in H^1$. Then the corresponding solution to the defocusing problem \eqref{NLS} exists globally in time and scatters in $H^1$ in both directions.
	\end{theorem}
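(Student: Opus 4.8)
\emph{Proof plan.} The plan is to follow the Dodson--Murphy scheme, which rests on three ingredients: a variational globalization step, an abstract scattering criterion, and a Morawetz-type estimate verifying it; the focusing and defocusing cases differ mainly in which Morawetz inequality is used. \emph{Step 1 (globalization and coercivity).} In the focusing case, mass conservation turns \eqref{scat-cond} into the uniform bound $\sup_{t\in[0,T^*)}\|u(t)\|_{L^{\alpha+2}}^{\alpha+2}<\infty$; inserting this into the energy identity $E(u_0)=\tfrac12\|\Lambda u(t)\|_{L^2}^2-\tfrac1{\alpha+2}\|u(t)\|_{L^{\alpha+2}}^{\alpha+2}$ and using the equivalence of $\|\Lambda\cdot\|_{L^2}$ with $\|\nabla\cdot\|_{L^2}$ gives $\sup_{t\in[0,T^*)}\|u(t)\|_{H^1}<\infty$, hence $T^*=+\infty$ by the blow-up alternative. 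A short variational argument comparing \eqref{scat-cond} with the sharp Gagliardo--Nirenberg inequality (whose optimizer $Q$ satisfies $\|\nabla Q\|_{L^2}^2=\tfrac{3\alpha}{2(\alpha+2)}\|Q\|_{L^{\alpha+2}}^{\alpha+2}$) then shows that the virial functional $G(u(t)):=\|\nabla u(t)\|_{L^2}^2-\tfrac{3\alpha}{2(\alpha+2)}\|u(t)\|_{L^{\alpha+2}}^{\alpha+2}$ stays uniformly coercive, $G(u(t))\ge\delta\|\nabla u(t)\|_{L^2}^2\ge\delta'\|u(t)\|_{L^{\alpha+2}}^{\alpha+2}$ for all $t$. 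In the defocusing case all of this is automatic: mass and energy conservation bound $\|u(t)\|_{H^1}$ directly (hence global existence), and the relevant virial functional has a manifestly favorable sign.

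\emph{Step 2 (scattering criterion).} Next I would prove the Dodson--Murphy lemma: there exist $\vareps>0$ and $R_0>0$, depending only on $\sup_t\|u(t)\|_{H^1}$, such that if $\liminf_{t\to+\infty}\int_{|x|\le R_0}|u(t,x)|^2\,dx\le\vareps^2$, then $u$ scatters in $H^1$ forward in time. This is standard and uses no symmetry: with the dispersive and Strichartz estimates for $e^{-it\Hc}$ (valid under \eqref{cond-1-V}--\eqref{cond-2-V}, since $\Hc$ has no eigenvalue), write $u$ via Duhamel, split the nonlinear integral at an intermediate time, bound the far-in-time contribution by dispersive decay and the near one via H\"older using the assumed smallness of the local mass, and conclude that a suitable Strichartz norm of $u$ on $[T,\infty)$ is small, so that small-data theory applies.

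\emph{Step 3 (Morawetz estimate).} In the focusing case I would verify the hypothesis of Step 2 with a truncated virial identity with radial weight $a_R$ ($a_R(x)\simeq|x|^2$ near $0$, bounded, $\nabla^2a_R\ge0$ globally): its action obeys $|M_R(t)|\lesssim R^2\sup_t\|u(t)\|_{H^1}^2$, while $\tfrac{d}{dt}M_R$ controls a positive multiple of $G_R(u(t))$ (the localization of $G$ to $B_R$) plus a potential term of favorable sign --- here $V\ge0$, $x\cdot\nabla V\le0$ and $\partial_rV\in L^q$ are used --- minus errors supported on $\{|x|\sim R\}$. Averaging over $[T,2T]$ and using the localized coercivity of Step 1 (which controls $\|u(t)\|_{L^{\alpha+2}(B_R)}^{\alpha+2}$ up to the same annulus errors) yields
\[
\frac1T\int_T^{2T}\|u(t)\|_{L^{\alpha+2}(B_R)}^{\alpha+2}\,dt\ \lesssim\ \frac{R^2}{T}+\big(\text{annulus errors at scale }R\big).
\]
Taking $R$ among dyadic scales $2^jR_0$ and pigeonholing over $j$ --- using that the dyadic sum of the annulus errors is controlled by the globally and uniformly-in-time bounded quantities $\int_{|x|\ge R_0}\big(|\nabla u(t)|^2+|u(t)|^{\alpha+2}+R_0^{-2}|u(t)|^2\big)dx$ --- selects, along a sequence of times, a scale at which those errors are arbitrarily small; letting $T\to+\infty$ then produces large times $t_0$ with $\|u(t_0)\|_{L^{\alpha+2}(B_{R_0})}$ small, hence $\int_{|x|\le R_0}|u(t_0)|^2\,dx\le\vareps^2$ by H\"older on the ball, and Step 2 finishes the focusing case. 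In the defocusing case I would instead run an interaction Morawetz estimate in the spirit of \cite{TVZ} and \cite{Dinh-INLS-poten}: under \eqref{cond-3-V} the potential contributes a nonnegative term ($x\cdot\nabla V\le0$ together with $\nabla^2V$ non-positive definite when $V$ is non-radial, and $\partial_rV\in L^q$ when $V$ is radial), giving the a priori bound $\|u\|_{L^4(\R\times\R^3)}\le C(M,E)<\infty$, which a standard perturbation argument upgrades to scattering in both directions.

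\emph{Main difficulty.} The delicate point, and where this goes beyond Hamano--Ikeda, is the control of the annulus terms in Step 3 without assuming $u$ radial: the radial Sobolev embedding $\|u(t)\|_{L^\infty(|x|\sim R)}\lesssim R^{-1}\|u(t)\|_{H^1}$, which disposes of them in the radial case, is no longer available. One must instead exploit the telescoping of these errors over dyadic scales together with a coercivity that controls the $L^{\alpha+2}$ norm on balls (not merely $\|\nabla u\|_{L^2}$) in the focusing case, and the tensorized structure of the interaction Morawetz functional in the defocusing case; making these estimates interlock --- in particular matching the pigeonholed scale with the radius demanded by the scattering criterion while keeping every error uniformly controlled in $t$ --- is exactly where the hypotheses $V\ge0$, $x\cdot\nabla V\le0$, $\partial_rV\in L^q$ (focusing) and \eqref{cond-3-V} (defocusing) enter essentially.
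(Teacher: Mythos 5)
There is a genuine gap: your plan is essentially the radial Dodson--Murphy/Hamano--Ikeda scheme, and its two central steps do not survive the removal of radial symmetry, which is the whole content of this theorem. First, the scattering criterion you state in Step 2 (smallness of $\int_{|x|\le R_0}|u(t)|^2\,dx$ at one large time, for a ball centered at the origin) is not ``standard and uses no symmetry'': in its proof the recent-time Duhamel piece is only small on the ball, and the exterior contribution is controlled in the radial case by the radial Sobolev embedding $|u(x)|\lesssim |x|^{-1}\|u\|_{H^1}$, which is unavailable here; moreover, for non-radial solutions the relevant concentration need not sit near the origin at all, so an origin-centered fixed-ball mass criterion cannot detect it. The paper instead proves a different criterion (Lemma \ref{lem-scat-crite}): smallness of the space-time norm $\|u\|_{L^q([t_0-\vareps^{-\sigma},t_0]\times\R^3)}$ on a window inside every interval $(a,a+T_0)$, with $q=\frac{5\alpha}{2}$, obtained from dispersive and Strichartz estimates for $e^{-it\Hc}$. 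Second, in Step 3 your ordinary truncated virial with origin-centered weight produces annulus errors $\int_{|x|\sim R}|u|^{\alpha+2}\,dx$ which, for non-radial $u$, are merely bounded; their dyadic sum is comparable to $\|u\|_{L^{\alpha+2}}^{\alpha+2}$, so pigeonholing over scales yields smallness only like $1/J$ at a time-dependent scale and never the uniform smallness you need, and even if it did, it would feed into the invalid criterion of Step 2. The mechanism the paper actually uses, and which is absent from your plan, is the non-radial interaction Morawetz estimate of Dodson--Murphy: the action $\Mca^{\otimes 2}_R$ built from the convolution weights $\phi_R,\psi_R$, averaged over all centers $z$, over dyadic radii $R\in[R_0,R_0e^J]$ and over long time windows, combined with a Galilean boost $\xi(t,z,R)$ chosen to annihilate the localized momentum so that the coercivity (refined Gagliardo--Nirenberg, Lemma \ref{lem-GN}, and Lemma \ref{lem-coer-2}) applies on balls centered at \emph{arbitrary} $z$; the resulting estimate \eqref{inter-mora-est} is then converted into the $L^q$ smallness by pigeonholing in $R$, in the lattice translates of $\chi_{R_1}$, and in time, plus the inequality $\|u\|^4_{L^3}\lesssim\|u\|^2_{L^2}\|\nabla u^\xi\|^2_{L^2}$ and interpolation with $L^{10}_{t,x}$.

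Two further points. The potential term in the interaction Morawetz identity is $-2\iint|u(y)|^2\psi_R(x-y)(x-y)\cdot\tfrac{x}{|x|}\,\partial_r V\,|u(x)|^2\,dx\,dy$; contrary to your defocusing sketch, this has no sign even when $\partial_r V\le 0$, because $(x-y)\cdot\tfrac{x}{|x|}$ changes sign. The paper bounds it by $R\int(-\partial_r V)|u(t)|^2dx$ and absorbs that via the separate one-particle Morawetz estimate (Lemma \ref{lem-mora-est-focus}, Corollary \ref{coro-mora-est-defocus}); only in the non-radial defocusing case does the sign come for free, from $x\cdot\nabla V\le 0$ together with $\nabla^2V\le 0$ after writing $\nabla V(x)=\nabla V(x-y)+\int_0^1\nabla^2V(x-y+\theta y)\,y\,d\theta$. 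This coupling of the interaction estimate with the ordinary Morawetz estimate, and the hypotheses $\partial_r V\in L^q$ that make it legitimate, is another ingredient your outline omits.
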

	
	Theorem \ref{theo-scat-crite} gives a general criterion for the energy scattering for \eqref{NLS-V}. In the focusing case, Theorem \ref{theo-scat-crite} allows us to study long time dynamics of solutions with data lying both below and at the ground state threshold (see Theorem \ref{theo-scat-below} and Theorem \ref{theo-scat-at}). In the defocusing case, Theorem \ref{theo-scat-crite} not only gives an alternative proof for the energy scattering given in \cite[Theorem 1.4]{Dinh-INLS-poten} but also extends this result to the case of non-radially symmetric potential. Moreover, comparing to \cite{Hong}, we do not assume any smallness condition on $\|(x \cdot \nabla V)_+\|_{\Kc}$.
	
	\begin{remark} 
		The condition $\partial_r V \in L^q$ for any $\frac{3}{2} \leq q \leq \infty$ is needed to ensure $\partial_r V |u(t)|^2  \in L^1$ (see Remark \ref{rem-Lq}). One may relax this assumption to $\partial_r V \in L^q + L^\infty$ for some $q \geq \frac{3}{2}$.
	\end{remark}
	
	\begin{remark} \label{rem-zero-poten}
		There is no non-zero potential $V$ satisfying the following properties: $V$ is radially symmetric, $V \in L^{\frac{3}{2}}$, $V\geq 0$, $x \cdot \nabla V \in L^{\frac{3}{2}}$, $x\cdot \nabla V \leq 0$, and $\nabla^2 V$ is non-positive definite. Under these assumptions, $V$ is non-negative, concave, and decreasing in the radial direction. This potential does not belong to $L^{\frac{3}{2}}$ except $V \equiv 0$.
	\end{remark}
	
	\begin{remark}
		It was pointed out in \cite{HI} using the result of \cite{Mizutani-PAMS} that if $V\in L^{\frac{3}{2}}$ and $V\geq 0$, then there exist $f_{\pm} \in H^1$ such that
		\[
		\lim_{t\rightarrow \pm \infty} \|e^{-it\Hc} f - e^{it\Delta} f_{\pm}\|_{H^1} =0.
		\]
		By this result, the scattering for the focusing case given in Theorem $\ref{theo-scat-below}$ can be rewritten as: there exist $u_+ \in H^1$ such that 
		\[
		\lim_{t\rightarrow + \infty} \|u(t)-e^{it\Delta} u_+ \|_{H^1} =0, 
		\]
		i.e. the solution behaves like the linear solution without potential at infinity.
	\end{remark}

	A first application of Theorem \ref{theo-scat-crite} is the following energy scattering below the ground state threshold.
	
	\begin{theorem} [Scattering below the ground state threshold] \label{theo-scat-below}
		Let $\frac{4}{3}<\alpha<4$. Let $V:\R^3 \rightarrow \R$ be radially symmetric satisfying \eqref{cond-1-V}, $V\geq 0$, $x\cdot \nabla V \leq 0$, and $\partial_r V \in L^q$ for any $\frac{3}{2} \leq q \leq \infty$. Let $u_0 \in H^1$ satisfy \eqref{cond-1-HI} and \eqref{cond-2-HI}. Then the corresponding solution to the focusing problem \eqref{NLS-V} satisfies 
		\begin{align} \label{est-solu-below}
		\sup_{t\in (-T_*,T^*)} \|u(t)\|^{\alpha+2}_{L^{\alpha+2}} \|u(t)\|^{2\sigc}_{L^2} < \|Q\|^{\alpha+2}_{L^{\alpha+2}}\|Q\|^{2\sigc}_{L^2},
		\end{align}
		where $(-T_*,T^*)$ is the maximal time interval of existence. In particular, the solution exists globally in time, and scatters in $H^1$ in both directions.			
	\end{theorem}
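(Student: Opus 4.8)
The plan is to deduce Theorem \ref{theo-scat-below} from the scattering criterion of Theorem \ref{theo-scat-crite}: it suffices to show that, under the subthreshold assumptions \eqref{cond-1-HI}--\eqref{cond-2-HI}, the solution satisfies the uniform bound \eqref{est-solu-below} on its maximal interval of existence $(-T_*,T^*)$, and then invoke the criterion. The verification of \eqref{est-solu-below} is the classical variational ``energy-trapping'' argument of Holmer--Roudenko, and the only point needing attention is to accommodate the potential: since $V\ge 0$ one has $E_0(f)\le E(f)$ for every $f\in H^1$, where $E_0$ is the potential-free energy \eqref{defi-E0}, so that the comparison with the potential-free ground state $Q$ goes in the favorable direction.

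First I would record the scale-invariant form of the sharp Gagliardo--Nirenberg inequality in $\R^3$ for the nonlinearity $|u|^\alpha u$, namely
\[
\|f\|_{L^{\alpha+2}}^{\alpha+2}\,\|f\|_{L^2}^{2\sigc}\le C_{\mathrm{GN}}\bigl(\|\nabla f\|_{L^2}^2\,\|f\|_{L^2}^{2\sigc}\bigr)^{\frac{3\alpha}{4}},
\]
with equality at $f=Q$; together with the Pohozaev identities for \eqref{ell-equ} this identifies $\eta_\star:=\|\nabla Q\|_{L^2}^2\|Q\|_{L^2}^{2\sigc}$ as the unique maximizer on $(0,\infty)$ of $f(y):=\tfrac12 y-\tfrac{C_{\mathrm{GN}}}{\alpha+2}y^{3\alpha/4}$, with $f(\eta_\star)=E_0(Q)[M(Q)]^{\sigc}$ and $C_{\mathrm{GN}}\,\eta_\star^{3\alpha/4}=\|Q\|_{L^{\alpha+2}}^{\alpha+2}\|Q\|_{L^2}^{2\sigc}$. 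Next, writing $m:=M(u_0)$ and $\eta(t):=\|\nabla u(t)\|_{L^2}^2\,\|u(t)\|_{L^2}^{2\sigc}=\|\nabla u(t)\|_{L^2}^2\,m^{\sigc}$ (mass is conserved), I would combine conservation of energy, the inequality $E_0\le E$, and the Gagliardo--Nirenberg inequality above to obtain, for every $t$ in the maximal interval,
\[
f(\eta(t))\le E_0(u(t))\,m^{\sigc}\le E(u_0)\,m^{\sigc}=:\mathcal E_0<E_0(Q)[M(Q)]^{\sigc}=f(\eta_\star),
\]
the last strict inequality being precisely \eqref{cond-1-HI}. Since $\alpha>\tfrac43$, the function $f$ strictly increases on $[0,\eta_\star]$ and strictly decreases afterwards, so the sublevel set $\{y\ge 0:\,f(y)\le\mathcal E_0\}$ splits into a left interval $[0,\eta_0]$ and a right interval $[\eta_1,\infty)$ with $\eta_0<\eta_\star<\eta_1$ and $f(\eta_0)=\mathcal E_0$; moreover $\eta(0)<\eta_\star$ by \eqref{cond-2-HI}, so $f(\eta(0))\le\mathcal E_0$ forces $\eta(0)\in[0,\eta_0]$. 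As $t\mapsto\eta(t)$ is continuous on the connected maximal interval and $f(\eta(t))\le\mathcal E_0$ throughout, a standard continuity/bootstrap argument keeps $\eta(t)\in[0,\eta_0]$ for all $t$. Consequently, by the scale-invariant Gagliardo--Nirenberg inequality,
\[
\sup_{t\in(-T_*,T^*)}\|u(t)\|_{L^{\alpha+2}}^{\alpha+2}\|u(t)\|_{L^2}^{2\sigc}\le C_{\mathrm{GN}}\,\eta_0^{3\alpha/4}<C_{\mathrm{GN}}\,\eta_\star^{3\alpha/4}=\|Q\|_{L^{\alpha+2}}^{\alpha+2}\|Q\|_{L^2}^{2\sigc},
\]
which is \eqref{est-solu-below}. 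Finally, since $\eta(t)\le\eta_0$ together with mass conservation bounds $\|u(t)\|_{H^1}$ uniformly, the blow-up alternative gives $T_*=T^*=\infty$ (this is also contained in Theorem \ref{theo-scat-crite}), and Theorem \ref{theo-scat-crite} yields scattering forward in time; scattering backward in time follows by applying the same reasoning to $\overline{u(-t)}$, which solves \eqref{NLS-V} with data $\overline{u_0}$ satisfying the same hypotheses.

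The main obstacle is the energy-trapping step: one must convert the strict inequality \eqref{cond-1-HI} into a uniform-in-time gap between $\eta(t)$ and $\eta_\star$, so that the \emph{supremum} over all $t$ (not merely each individual value) stays strictly below the ground-state quantity $\|Q\|_{L^{\alpha+2}}^{\alpha+2}\|Q\|_{L^2}^{2\sigc}$ needed to trigger \eqref{scat-cond}. The potential itself creates no real difficulty here: because $V\ge 0$ the inequality $E_0\le E$ is exactly the one required, the relevant ground state remains the potential-free $Q$, and the conserved auxiliary quantity is the ordinary mass $M(u_0)$ rather than any potential-weighted mass, so no new variational analysis involving $\Hc$ is needed—the remaining work is purely the potential-free Gagliardo--Nirenberg/Pohozaev bookkeeping and the continuity argument.
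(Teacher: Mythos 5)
Your proposal is correct and follows essentially the same route as the paper: reduce to the scattering criterion of Theorem \ref{theo-scat-crite}, then run the Holmer--Roudenko energy-trapping argument using $V\geq 0$ (so $E_0\leq E$), the sharp Gagliardo--Nirenberg inequality with the Pohozaev identities for $Q$, conservation of mass and energy, and a continuity argument to obtain a uniform gap below the threshold, which yields \eqref{est-solu-below}. The only difference is cosmetic: the paper quantifies the gap via the normalized function $H(\lambda)$ and a parameter $\vartheta$, while you read it off from the sublevel-set structure of $f(y)=\tfrac12 y-\tfrac{C_{\opt}}{\alpha+2}y^{3\alpha/4}$; both give the same uniform strict bound.
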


	\begin{remark}
		Comparing to \cite{Hong}, our result extends the one in \cite{Hong} (with radially symmetric potential) to the whole range of the intercritical case. Comparing to \cite{HI}, our result improves the one in \cite{HI} by removing the radial assumption on initial data. 
	\end{remark}
	
	Another application of Theorem \ref{theo-scat-crite} is the following long time dynamics for solutions lying at the ground state threshold for the focusing problem \eqref{NLS-V}.
	
	\begin{theorem} [Scattering at the ground state threshold] \label{theo-scat-at}
		Let $\frac{4}{3}<\alpha<4$. Let $V:\R^3 \rightarrow \R$ be radially symmetric satisfying \eqref{cond-1-V}, $V\geq 0$, $x\cdot \nabla V \leq 0$, and $\partial_r V \in L^q$ for any $\frac{3}{2} \leq q \leq \infty$. Let $u_0 \in H^1$ satisfy
		\begin{align} \label{cond-enegy-at}
		E(u_0) [M(u_0)]^{\sigc} = E_0(Q) [M(Q)]^{\sigc}
		\end{align}
		and
		\begin{align} \label{cond-grad-at}
		\|\nabla u_0\|_{L^2} \|u_0\|^{\sigc}_{L^2} < \|\nabla Q\|_{L^2} \|Q\|^{\sigc}_{L^2}.
		\end{align}
		Then the corresponding solution to the focusing problem \eqref{NLS-V} exists globally in time. Moreover, the solution either scatters in $H^1$ forward in time, or there exist a time sequence $t_n\rightarrow \infty$ and a sequence $(y_n)_{n\geq 1} \subset \R^3$ satisfying $|y_n| \rightarrow \infty$ such that
		\[
		u(t_n, \cdot+y_n) \rightarrow e^{i\theta} \lambda Q \quad \text{strongly in } H^1 
		\]
		for some $\theta \in \R$ and $\lambda:= \frac{\|u_0\|_{L^2}}{\|Q\|_{L^2}}$ as $n\rightarrow \infty$.
	\end{theorem}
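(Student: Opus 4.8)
\emph{Overview and global existence.} The plan is to combine the threshold variational analysis with the scattering criterion of Theorem~\ref{theo-scat-crite} and a concentration--compactness argument. Write $\sigc$ as in \eqref{defi-sigc}, set $\mathcal{E}(f):=\|f\|_{L^{\alpha+2}}^{\alpha+2}\|f\|_{L^2}^{2\sigc}$, $\mathcal{E}_Q:=\|Q\|_{L^{\alpha+2}}^{\alpha+2}\|Q\|_{L^2}^{2\sigc}$ and $x_Q:=\|\nabla Q\|_{L^2}\|Q\|_{L^2}^{\sigc}$, and recall that the sharp Gagliardo--Nirenberg inequality, after raising to a power and multiplying by $\|f\|_{L^2}^{2\sigc}$, can be written in the scale-free form $\mathcal{E}(f)\le C_{\mathrm{GN}}\big(\|\nabla f\|_{L^2}\|f\|_{L^2}^{\sigc}\big)^{3\alpha/2}$, with equality precisely when $f$ is, up to the symmetries of \eqref{ell-equ}, a rescaling of $Q$. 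Since $V\ge0$ one has $\|\nabla f\|_{L^2}\le\|\Lambda f\|_{L^2}$ and $E_0(f)\le E(f)$ (with $E_0$ as in \eqref{defi-E0}), so conservation of energy gives $E_0(u(t))[M(u_0)]^{\sigc}\le E(u_0)[M(u_0)]^{\sigc}=E_0(Q)[M(Q)]^{\sigc}$ throughout the maximal interval. Setting $g(x):=\tfrac12x^2-\tfrac{C_{\mathrm{GN}}}{\alpha+2}x^{3\alpha/2}$, Gagliardo--Nirenberg yields $E_0(u(t))[M(u_0)]^{\sigc}\ge g\big(\|\nabla u(t)\|_{L^2}\|u(t)\|_{L^2}^{\sigc}\big)$, while the Pohozaev identities for $Q$ show that $g$ is strictly increasing on $[0,x_Q]$ with $g(x_Q)=E_0(Q)[M(Q)]^{\sigc}$. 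Hypothesis \eqref{cond-grad-at}, continuity in $t$, and these facts then force $\|\nabla u(t)\|_{L^2}\|u(t)\|_{L^2}^{\sigc}<x_Q$ for all $t$: an equality at some $t_0$ would force equality in Gagliardo--Nirenberg, so $u(t_0)$ would be a rescaled ground state with $|u(t_0)|>0$ everywhere, giving $\int V|u(t_0)|^2>0$ and hence $E(u(t_0))[M(u_0)]^{\sigc}>E_0(Q)[M(Q)]^{\sigc}$, contradicting \eqref{cond-enegy-at}. Together with mass conservation this gives global existence.

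\emph{The dichotomy.} The trapping just obtained, combined with the scale-free Gagliardo--Nirenberg inequality, gives $\mathcal{E}(u(t))\le\mathcal{E}_Q$, with strict inequality at every finite $t$. If $\sup_{t\ge0}\mathcal{E}(u(t))<\mathcal{E}_Q$, then \eqref{scat-cond} holds (with $T^*=\infty$) and Theorem~\ref{theo-scat-crite} gives scattering forward in time --- the first alternative. Otherwise $\sup_{t\ge0}\mathcal{E}(u(t))=\mathcal{E}_Q$; since this supremum cannot be attained at a finite time, there is $t_n\to+\infty$ with $\mathcal{E}(u(t_n))\to\mathcal{E}_Q$. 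The scale-free inequality then forces $\|\nabla u(t_n)\|_{L^2}\|u(t_n)\|_{L^2}^{\sigc}\to x_Q$, and inserting this into $E_0(u(t_n))[M(u_0)]^{\sigc}=\tfrac12\big(\|\nabla u(t_n)\|_{L^2}\|u(t_n)\|_{L^2}^{\sigc}\big)^2-\tfrac1{\alpha+2}\mathcal{E}(u(t_n))$, which therefore converges to $E_0(Q)[M(Q)]^{\sigc}=E(u_0)[M(u_0)]^{\sigc}$, together with $E(u(t_n))=E_0(u(t_n))+\tfrac12\int V|u(t_n)|^2=E(u_0)$, one obtains $\int V|u(t_n)|^2\to0$.

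\emph{Extracting the ground-state bubble.} The sequence $v_n:=u(t_n)$ is bounded in $H^1$ with $\|v_n\|_{L^2}=\|u_0\|_{L^2}$, $\|\nabla v_n\|_{L^2}\to x_Q\|u_0\|_{L^2}^{-\sigc}$, and $\mathcal{E}(v_n)\to\mathcal{E}_Q$. Since $\alpha+2<6$, the embedding $H^1(\R^3)\hookrightarrow L^{\alpha+2}$ loses compactness only through translations, so $v_n$ admits a profile decomposition $v_n=\sum_{j=1}^{J}\phi^j(\cdot-x_n^j)+w_n^J$ with $|x_n^j-x_n^k|\to\infty$ for $j\ne k$, $\limsup_n\|w_n^J\|_{L^{\alpha+2}}\to0$ as $J\to\infty$, and Pythagorean expansions of the $L^2$-, $\dot H^1$- and $L^{\alpha+2}$-norms. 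Applying Gagliardo--Nirenberg to each profile, using $\sum_j\|\phi^j\|_{L^2}^2\le\|u_0\|_{L^2}^2$ and $\sum_j\|\nabla\phi^j\|_{L^2}^2\le\lim_n\|\nabla v_n\|_{L^2}^2$, together with the elementary bound $\sum_j\beta_j^p\alpha_j^q\le1$ (valid for $p=3\alpha/4>1$, $q=(4-\alpha)/4\in(0,1)$, $\beta_j,\alpha_j\in[0,1]$, $\sum_j\beta_j=\sum_j\alpha_j=1$), one gets $\sum_j\|\phi^j\|_{L^{\alpha+2}}^{\alpha+2}\le\mathcal{E}_Q\|u_0\|_{L^2}^{-2\sigc}$; but the left-hand side equals $\lim_n\|v_n\|_{L^{\alpha+2}}^{\alpha+2}=\mathcal{E}_Q\|u_0\|_{L^2}^{-2\sigc}$, so every inequality is an equality. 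Hence there is exactly one non-trivial profile $\phi$, it carries all of the $L^2$-, $\dot H^1$- and $L^{\alpha+2}$-norm (so $w_n^J\to0$ in $H^1$), and $\phi$ realizes equality in Gagliardo--Nirenberg. By Weinstein's classification of the optimizers, $\phi$ is, up to a phase and a translation $y_n$, the ground state with $L^2$-norm $\|u_0\|_{L^2}$, i.e.\ the $e^{i\theta}\lambda Q$ of the statement with $\lambda=\|u_0\|_{L^2}/\|Q\|_{L^2}$, and $v_n(\cdot+y_n)\to e^{i\theta}\lambda Q$ in $H^1$ along a subsequence. Finally $|y_n|\to\infty$: if $y_n$ were bounded, then along a subsequence $v_n\to e^{i\theta}\lambda Q(\cdot-y_\infty)$ in $H^1$, hence $\int V|v_n|^2\to\lambda^2\int V|Q(\cdot-y_\infty)|^2>0$ (using $V\in L^{3/2}$, $V\ge0$ not identically zero, and $Q>0$), contradicting $\int V|u(t_n)|^2\to0$. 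This gives the second alternative.

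\emph{Main obstacle.} The crux is the concentration--compactness step: ruling out vanishing and, above all, splitting of the near-maximizing sequence $u(t_n)$, so that it concentrates into a single ground-state bubble. This rests on the strict sub-additivity of the mass- and gradient-constrained Gagliardo--Nirenberg functional (packaged in the elementary inequality above) together with Weinstein's rigidity for its optimizers; the escape $|y_n|\to\infty$ is then forced, rather than postulated, by $\int V|u(t_n)|^2\to0$ and the positivity of $V$ and of $Q$.
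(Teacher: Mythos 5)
Your proposal is correct and follows essentially the same route as the paper: the same threshold trapping and case distinction, the same reduction of the scattering alternative to Theorem~\ref{theo-scat-crite} via \eqref{scat-cond}, the same extraction of a sequence $t_n\to\infty$ with $\int V|u(t_n)|^2\,dx\to 0$, and the same Gagliardo--Nirenberg rigidity plus escape argument for $|y_n|\to\infty$, the only (cosmetic) difference being that you package the compactness step as a translation-only profile decomposition with a subadditivity inequality, where the paper invokes Lions' vanishing/dichotomy/compactness lemma. One shared caveat: identifying the limiting optimizer from its $L^2$-norm alone pins it down only up to the two-parameter family $e^{i\theta}\mu Q(\nu(\cdot-x_0))$; combining with the limiting gradient norm $\lambda^{-\sigc}\|\nabla Q\|_{L^2}$ gives $\nu=\lambda^{-(1+\sigc)}$, so the limit is a rescaled ground state rather than literally $e^{i\theta}\lambda Q$ unless $\|u_0\|_{L^2}=\|Q\|_{L^2}$ --- but this identification is made in exactly the same way in the paper's own proof, so your argument matches it.
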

	
	To our knowledge, the first result studied long time dynamics of solutions to the focusing nonlinear Schr\"odinger equation with data at the ground state threshold belongs to Duyckaerts-Roudenko \cite{DR}. They have showed qualitative properties of solutions at the ground state threshold based on delicate spectral estimates. However, their results are limited to the case of cubic nonlinearity, i.e. $\alpha=2$ in \eqref{NLS}. Recently, the first author in \cite{Dinh-DCDS} gave a simple proof for long time dynamics of solutions to the focusing  NLS with data at the ground state threshold in any dimensions. Our result is an extension of that in \cite{Dinh-DCDS} to the case of external potential. 
	
	\subsection{Outline of the proof} \label{subsec:outline}
	The proof of Theorem $\ref{theo-scat-below}$ is based on recent arguments of Dodson-Murphy \cite{DM-nonrad} which do not use the concentration-compactness-rigidity argument. The proof makes use of a suitable scattering criterion and the interaction Morawetz estimate as follows. First, by using Strichartz estimates, it was shown in \cite{HI} that if $u$ is a global solution to \eqref{NLS} satisfying 
	\[
	\sup_{t\in \R} \|u(t)\|_{H^1} \leq A
	\]
	for some constant $A>0$, then there exists $\delta=\delta(A)>0$ such that if 
	\begin{align} \label{scat-1-intro}
	\|e^{i(t-T)\Delta} u(T)\|_{L^q([T,\infty)\times \R^3)} <\delta
	\end{align}
	for some $T>0$, where $	q:=\frac{5\alpha}{2}$, then the solution scatters in $H^1$ forward in time. Second, thanks to dispersive estimates, the condition \eqref{scat-1-intro} is later reduced to show that there exist $\vareps=\vareps(A)>0$ and $T_0=T_0(\vareps,A)>0$ such that for any $a\in \R$, there exists $t_0 \in (a,a+T_0)$ such that $[t_0-\vareps^{-\sigma},t_0] \subset (a,a+T_0)$ and
	\begin{align} \label{scat-2-intro}
	\|u\|_{L^q([t_0-\vareps^{-\sigma},t_0]\times \R^3)} \lesssim \vareps^\mu
	\end{align}
	for some $\sigma, \mu>0$ satisfying
	\begin{align} \label{cond-sigma-mu-intro}
	\mu \alpha -\frac{\sigma}{10}>0.
	\end{align} 
	Third, to show \eqref{scat-2-intro}, we rely on the interaction Morawetz estimate introduced by Dodson-Murphy \cite{DM-nonrad}. More precisely, we consider the interaction Morawetz action
	\[
	\Mca^{\otimes 2}_R(t):= \iint |u(t,y)|^2 \psi(x-y)(x-y)\cdot 2 \ima(\overline{u}(t,x) \nabla u(t,x)) dxdy,
	\]
	where $\psi$ is a suitable localization. Taking into account the coercivity property of solutions and using the Galelian transformation, we show that there exists $T_0=T_0(\vareps), J=J(\vareps)$, $R_0=R_0(u_0,Q)$ such that for any $a \in \R$,
	\begin{align} \label{est-inte-mora-intro}
	\frac{1}{JT_0} \int_a^{a+T_0} \int_{R_0}^{R_0e^J} \frac{1}{R^N} \iiint |\chi_R(y-z) u(t,y)|^2 |\nabla[\chi_R(x-z) u^\xi(t,x)]|^2 dxdydz \frac{dR}{R} dt \lesssim \vareps,
	\end{align}
	where $\chi_R(x)=\chi(x/R)$ is a cutoff function and $u^\xi(t,x) =e^{ix\cdot \xi} u(t,x)$ with some $\xi = \xi(t,z,R) \in \R^3$. In the case $V$ is radially symmetric, we also make use of an estimate related to the Morawetz action
	\[
	\Mca_R(t):= \int \psi(x)x\cdot 2 \ima(\overline{u}(t,x) \nabla u(t,x)) dx.
	\]
	Finally, thanks to \eqref{est-inte-mora-intro}, an orthogonal argument similar to that of \cite{XZZ} implies \eqref{scat-2-intro}. 
	
	This paper is organized as follows. In Section $\ref{S2}$, we recall some preliminaries including dispersive estimates, Strichartz estimates, and the equivalence of Sobolev norms. In Section $\ref{S3}$, we recall the local well-posedness and show a suitable scattering criterion for \eqref{NLS-V}. Section $\ref{S4}$ is devoted to the proof of the interaction Morawetz estimate. The proof of Theorem $\ref{theo-scat-crite}$ is given in Section $\ref{S5}$. We give the proofs of Theorem \ref{theo-scat-below} and Theorem \ref{theo-scat-at} in Section \ref{S6}. Finally, a remark on long time dynamics for nonlinear Schr\"odinger equations with repulsive inverse-power potentials is given in Section \ref{S7}.
	
	\section{Preliminaries}
	\label{S2}
	\setcounter{equation}{0}
	In this section, we recall some useful estimates related to the Schr\"odinger operator with Kato potentials.
	
	\subsection{Dispersive estimate}
	\begin{lemma} [Dispersive estimate \cite{Hong}] Let $V:\R^3 \rightarrow \R$ satisfy \eqref{cond-1-V} and \eqref{cond-2-V}. Then we have
		\begin{align} \label{dis-est}
		\|e^{-it \Hc} f\|_{L^\infty} \lesssim |t|^{-\frac{3}{2}} \|f\|_{L^1}
		\end{align}
		for any $f \in L^1$.
	\end{lemma}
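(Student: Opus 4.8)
The plan is to deduce \eqref{dis-est} from a spectral representation of $e^{-it\Hc}$ together with a limiting absorption expansion of the resolvent $R_V(z):=(\Hc-z)^{-1}$ around the free resolvent $R_0(z):=(-\Delta-z)^{-1}$, in the spirit of Rodnianski--Schlag and Beceanu--Goldberg. First I would record what \eqref{cond-1-V} and \eqref{cond-2-V} provide. In dimension three $R_0(\mu^2\pm i0)$ has the explicit kernel $\frac{e^{\pm i\mu|x-y|}}{4\pi|x-y|}$ for $\mu>0$, so a Schur test with weight $|V|^{1/2}$ gives $\big\||V|^{1/2}R_0(\mu^2\pm i0)|V|^{1/2}\big\|_{L^2\to L^2}\le\frac{1}{4\pi}\|V\|_{\Kc}$ uniformly in $\mu\ge0$, and in particular $\big\||V_-|^{1/2}(-\Delta)^{-1}|V_-|^{1/2}\big\|_{L^2\to L^2}\le\frac{1}{4\pi}\|V_-\|_{\Kc}<1$ by \eqref{cond-2-V}. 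By the Birman--Schwinger principle this forces $\Hc$ to have no eigenvalues, spectrum equal to $[0,\infty)$ and purely absolutely continuous, and no zero-energy resonance; moreover the operator $M_\pm(\mu):=I+|V|^{1/2}R_0(\mu^2\pm i0)\,(\operatorname{sgn}V)\,|V|^{1/2}$ is invertible on $L^2$ with inverse bounded uniformly in $\mu$ (for $\mu$ in a bounded set, from compactness of $|V|^{1/2}R_0|V|^{1/2}$---valid since $V\in L^{3/2}$---together with the Fredholm alternative and the absence of the zero obstruction; for large $\mu$, from the high-energy estimates mentioned below). Stone's formula and the substitution $\lambda=\mu^2$ then give
\[
e^{-it\Hc}f=\frac{1}{\pi i}\int_0^\infty e^{-it\mu^2}\,\mu\,\big[R_V(\mu^2+i0)-R_V(\mu^2-i0)\big]f\,d\mu .
\]

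Next I would insert the symmetric resolvent identity
\[
R_V(\mu^2\pm i0)=R_0(\mu^2\pm i0)-R_0(\mu^2\pm i0)\,|V|^{1/2}\,M_\pm(\mu)^{-1}\,(\operatorname{sgn}V)\,|V|^{1/2}\,R_0(\mu^2\pm i0)
\]
into the previous display. The contribution of the leading term $R_0(\mu^2+i0)-R_0(\mu^2-i0)$ is exactly the free propagator $e^{it\Delta}$, for which the classical bound $\|e^{it\Delta}f\|_{L^\infty}\le(4\pi|t|)^{-3/2}\|f\|_{L^1}$ already yields \eqref{dis-est}. It therefore remains to control the correction term. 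Substituting the explicit kernels of $R_0(\mu^2\pm i0)$ turns it into an oscillatory integral in $\mu$ with phase $-t\mu^2\pm\mu(|x-y_1|+|y_2-y|)$, where $y_1,y_2$ are the internal variables contracted with $|V|^{1/2}$, times an amplitude assembled from the Coulomb factors $\tfrac{1}{4\pi|x-y_1|}$, $\tfrac{1}{4\pi|y_2-y|}$ and the kernel of $M_\pm(\mu)^{-1}$. Performing the $d\mu$ integration by (non-)stationary phase produces the decay $|t|^{-3/2}$, provided the amplitude is $C^1$ in $\mu$ with norms integrable over $(0,\infty)$ once the stationary-phase weights are taken into account; the remaining $L^1\to L^\infty$ bound then closes by using the Kato estimate above to absorb the Coulomb singularities against $|V|^{1/2}$, followed by Cauchy--Schwarz in the internal variables.

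The step I expect to be the main obstacle is this last one: making the oscillatory $\mu$-integration rigorous uniformly in the spatial parameters, which amounts to a quantitative limiting absorption principle for $\Hc$, i.e. suitable bounds on $R_0(\mu^2\pm i0)$, on $M_\pm(\mu)^{-1}$ and on their first $\mu$-derivatives, valid both as $\mu\to0^+$ and as $\mu\to+\infty$. The small-$\mu$ part is exactly where the quantitative smallness \eqref{cond-2-V} is essential: it forces the zero-energy Birman--Schwinger operator to have norm strictly below $1$, ruling out a zero eigenvalue and a zero-energy resonance and thereby keeping $M_\pm(\mu)^{-1}$ bounded down to $\mu=0$. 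The large-$\mu$ part is delicate because $\partial_\mu R_0(\mu^2\pm i0)$ has a non-decaying kernel, so differentiation in $\mu$ must be traded for integration by parts in the spatial variables; here $V\in L^{3/2}$ together with the Coulomb bounds keeps the relevant operator norms finite and summable. Splitting the $\mu$-integral at $\mu=1$, estimating the low- and high-energy pieces with these inputs, and summing gives \eqref{dis-est}.
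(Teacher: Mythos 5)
First, note that the paper contains no proof of this lemma: it is imported verbatim from \cite{Hong}, where it is obtained by verifying the spectral hypotheses (no eigenvalues, no zero-energy resonance) under \eqref{cond-1-V}--\eqref{cond-2-V} and then invoking the Beceanu--Goldberg dispersive estimate for scaling-critical Kato-class potentials. Your proposal is, in effect, an attempt to re-derive that underlying theorem via Stone's formula, the symmetric resolvent identity and stationary phase; the roadmap is the right one, but the steps you leave as ``expected obstacles'' are exactly the substantive content, and as written there are genuine gaps. (i) Invertibility of $M_\pm(\mu)$ for $\mu>0$: the Fredholm alternative only reduces this to showing the kernel is trivial, i.e.\ to excluding embedded positive eigenvalues/resonances of $\Hc$. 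This does \emph{not} follow from $\|V_-\|_{\Kc}<4\pi$, which controls only negative energies and the zero-energy threshold; at this regularity one needs a separate absence-of-embedded-eigenvalues theorem (Kato/Agmon for decaying potentials, Ionescu--Jerison or Koch--Tataru for $\Kc\cap L^{3/2}$). Also, at $\mu=0$ the relevant Birman--Schwinger operator involves the full $V$, not $V_-$, so your Neumann-series smallness does not literally apply; the correct route (and Hong's) is the form inequality $-\Delta+V\ge -\Delta+V_-\ge \bigl(1-\tfrac{\|V_-\|_{\Kc}}{4\pi}\bigr)(-\Delta)$, which rules out a zero eigenvalue/resonance.

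(ii) High energies: the Schur/Kato bound $\||V|^{1/2}R_0(\mu^2\pm i0)|V|^{1/2}\|\le\tfrac{1}{4\pi}\|V\|_{\Kc}$ is uniform in $\mu$ but does not decay, and $V\in L^{3/2}$ only reaches the non-decaying resolvent endpoint $L^{6/5}\to L^{6}$, so the ``high-energy estimates mentioned below'' are not available at this generality without first approximating $V$ in Kato norm by bounded compactly supported potentials (this is precisely why Beceanu--Goldberg work in the closure $\Kc_0$ of nice potentials, a fact one must also verify for $\Kc\cap L^{3/2}$). (iii) The final oscillatory-integral step: performing stationary phase term by term in the Born series with Coulomb-singular amplitudes and then summing is not known to close by direct estimates for scaling-critical Kato potentials; Beceanu--Goldberg bypass it with an operator-valued Wiener-algebra argument rather than the $C^1$-in-$\mu$ amplitude bounds you propose. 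So your text is a plausible outline of the known proof strategy, but the decisive ingredients---absence of embedded spectrum, high-energy invertibility, and the summable dispersive bound for the tail of the resolvent expansion---are asserted rather than proved; for the purposes of this paper the honest proof is the citation to \cite{Hong} and the references therein.
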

	
	\subsection{Strichartz estimates}
	Let $I \subset \R$ be an interval and $q,r \in [1,\infty]$. We define the mixed norm
	\[
	\|u\|_{L^q(I,L^r)} := \left(\int_I \left( \int_{\R^3} |u(t,x)|^r dx \right)^{\frac{q}{r}} dt \right)^{\frac{1}{q}}
	\]
	with a usual modification when either $q$ or $r$ are infinity. When $q=r$, we use the notation $L^q(I \times \R^3)$ instead of $L^q(I,L^q)$.
	
	\begin{definition}
		A pair $(q,r)$ is said to be Schr\"odinger admissible, for short $(q,r)\in S$, if
		\[
		\frac{2}{q}+\frac{3}{r}=\frac{3}{2}, \quad r\in [2,6].
		\]
	\end{definition}
	
	Thanks to dispersive estimates \eqref{dis-est}, the abstract theory of Keel-Tao \cite{KT} implies the following Strichartz estimates.
	\begin{proposition}[Strichartz estimates \cite{Hong}]
		Let $V:\R^3 \rightarrow \R$ satisfy \eqref{cond-1-V} and \eqref{cond-2-V}. Let $I \subset \R$ be an interval. Then there exists a constant $C>0$ independent of $I$ such that the following estimates hold:
		\begin{itemize}
			\item (Homogeneous estimates)
			\[
			\|e^{-it\Hc} f\|_{L^q(I,L^r)} \leq C \|f\|_{L^2}
			\]
			for any $f \in L^2$ and any Schr\"odinger admissible pair $(q,r)$.
			\item (Inhomogeneous estimates)
			\[
			\left\|\int_0^t e^{-i(t-s)\Hc} F(s) ds \right\|_{L^q(I,L^r)} \leq C \|F\|_{L^{m'}(I,L^{n'})}
			\]
			for any $F \in L^{m'}(I,L^{n'})$ and any Schr\"odinger admissible pairs $(q,r), (m,n)$, where $(m,m')$ and $(n,n')$ are H\"older conjugate pairs.
		\end{itemize}
	\end{proposition}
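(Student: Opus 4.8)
The plan is to obtain both estimates as a direct application of the abstract Strichartz theorem of Keel--Tao \cite{KT}, whose two structural hypotheses we now verify for the propagator $U(t):=e^{-it\Hc}$. First, recalling that the free resolvent $(-\Delta)^{-1}$ on $\R^3$ has kernel $\tfrac{1}{4\pi|x-y|}$, a Schur-test argument shows that \eqref{cond-2-V} makes the negative part $V_-$ form-bounded with respect to $-\Delta$ with relative bound $\|V_-\|_{\mathcal{K}}/(4\pi)<1$; combined with \eqref{cond-1-V}, the KLMN form-perturbation theorem then produces a self-adjoint, bounded-below realization of $\Hc=-\Delta+V$ with form domain $H^1$. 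Consequently $e^{-it\Hc}$ is a one-parameter unitary group on $L^2$, so $\|U(t)f\|_{L^2}=\|f\|_{L^2}$ for every $t\in\R$. Second, since $U(t)U(s)^{*}=e^{-i(t-s)\Hc}$, the dispersive estimate \eqref{dis-est} reads precisely $\|U(t)U(s)^{*}g\|_{L^\infty}\lesssim |t-s|^{-3/2}\|g\|_{L^1}$, which is exactly the untruncated decay bound required by the abstract theorem, with decay exponent $\sigma=\tfrac32$.

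With these two inputs in hand, the Keel--Tao theorem applies with $\sigma=\tfrac32$. In their formulation a pair $(q,r)$ is admissible when $\tfrac1q+\tfrac{\sigma}{r}=\tfrac{\sigma}{2}$ and $q\geq2$, which for $\sigma=\tfrac32$ is exactly $\tfrac2q+\tfrac3r=\tfrac32$ with $q\geq2$, i.e. $(q,r)\in S$ with $r\in[2,6]$; and since $\sigma=\tfrac32\neq1$, the double endpoint $q=2$ (equivalently $r=6$) is included. The theorem then yields, with constants independent of the time interval, the homogeneous bound $\|U(t)f\|_{L^q(\R,L^r)}\lesssim\|f\|_{L^2}$ and the retarded inhomogeneous bound
\[
\Bigl\|\int_{s<t}U(t)U(s)^{*}F(s)\,ds\Bigr\|_{L^q(\R,L^r)}\lesssim\|F\|_{L^{m'}(\R,L^{n'})}
\]
for all admissible pairs $(q,r),(m,n)$. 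Restricting $F$ to an arbitrary subinterval $I\subset\R$ (which does not change the constants) and using the interval structure to identify $\int_{s<t}$ with the causal integral $\int_0^t$ on $I$ --- or, away from the double endpoint, invoking the Christ--Kiselev lemma, which applies there since $q>m'$ --- gives the two estimates exactly as stated.

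The only genuinely delicate point lies inside \cite{KT}, namely the inhomogeneous estimate at the double endpoint $q=m=2$, where the Christ--Kiselev lemma is unavailable; this is handled directly in Keel--Tao's proof via a bilinear real-interpolation argument, and it is available to us precisely because the decay exponent $\sigma=\tfrac32$ exceeds $1$. On our side the work is confined to checking the abstract hypotheses: the $L^2$-unitarity of $e^{-it\Hc}$, which rests on the self-adjoint form realization of $\Hc$ furnished by \eqref{cond-1-V}--\eqref{cond-2-V} through the KLMN theorem, and the sharp unbounded-time dispersive decay \eqref{dis-est}; in particular no short-time truncation or frequency localization is required, which is why the single estimate \eqref{dis-est} suffices.
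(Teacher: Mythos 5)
Your proposal is correct and follows the same route the paper takes: the paper simply invokes the dispersive estimate \eqref{dis-est} together with the abstract Keel--Tao theorem (citing \cite{Hong} and \cite{KT}), and your argument fills in exactly the standard verification of its two hypotheses ($L^2$-unitarity via the KLMN form realization of $\Hc$ under \eqref{cond-1-V}--\eqref{cond-2-V}, and the untruncated $|t-s|^{-3/2}$ decay), with the endpoint included since $\sigma=\tfrac32>1$. No substantive difference from the paper's approach.
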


	\subsection{The equivalence of Sobolev norms}
	Let $\gamma \geq 0$. We define the homogeneous and inhomogeneous Sobolev spaces associated to $\Hc$ as the closure of $C^\infty_0(\R^3)$ under the norms
	\[
	\|f\|_{\dot{W}^{\gamma,r}_V}:=\|\Lambda^\gamma f \|_{L^r}, \quad \|f\|_{W^{\gamma,r}_V}:= \|\scal{\Lambda}^\gamma f\|_{L^r}, \quad \Lambda:=\sqrt{\Hc}, \quad \scal{\Lambda}:=\sqrt{1+\Hc}.
	\]
	When $r=2$, we abbreviate $\dot{H}^\gamma_V:= \dot{W}^{\gamma,2}_V$ and $H^\gamma_V:=W^{\gamma,2}_V$. We have the following Sobolev estimates and the equivalence of Sobolev spaces due to Hong \cite{Hong}. 
	\begin{lemma} [Sobolev estimates \cite{Hong}]
		Let $V:\R^3 \rightarrow \R$ satisfy \eqref{cond-1-V} and \eqref{cond-2-V}. Then we have
		\[
		\|f\|_{L^q} \lesssim \|f\|_{\dot{W}^{\gamma,r}_V}, \quad \|f\|_{L^q} \lesssim \|f\|_{W^{\gamma,r}_V},
		\]
		where $1<r<q<\infty$, $1<r<\frac{3}{\gamma}$, $0\geq \gamma\leq 2$ and $\frac{1}{q}=\frac{1}{r}-\frac{\gamma}{3}$.
	\end{lemma}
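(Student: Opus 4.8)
The plan is to reduce the two inequalities to mapping properties of fractional powers of $\Hc$ and then to prove those by comparing $\Hc$ with $-\Delta$ at the level of resolvent kernels. First I would record that, by the Kato condition $\|V_-\|_{\Kc}<4\pi$, a Schur test on the kernel $\frac{|V_-(x)|^{1/2}|V_-(y)|^{1/2}}{4\pi|x-y|}$ gives $\int V_-|f|^2\,dx\le\frac{\|V_-\|_{\Kc}}{4\pi}\|\nabla f\|_{L^2}^2$ for $f\in H^1$, so $\Hc=-\Delta+V$ is nonnegative on $L^2$ with no zero eigenvalue or resonance, and the powers $\Lambda^\gamma=\Hc^{\gamma/2}$, $\scal{\Lambda}^\gamma=(1+\Hc)^{\gamma/2}$ are well defined for $0\le\gamma\le 2$. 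Substituting $g=\Lambda^\gamma f$ and $g=\scal{\Lambda}^\gamma f$ respectively, the two claims become
\[
\Hc^{-\gamma/2}:L^r\to L^q,\qquad (1+\Hc)^{-\gamma/2}:L^r\to L^q
\]
for $1<r<q<\infty$, $1<r<3/\gamma$, $\frac1q=\frac1r-\frac{\gamma}{3}$, so it suffices to establish these mapping properties.

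The key step is a pointwise bound on the resolvent kernel, uniform in the spectral parameter. Writing $W:=-V_-\ge 0$ and $\Hc_+:=-\Delta+V_+$, so that $\Hc=\Hc_+-W$, I would first note that since $V_+\ge 0$ the Trotter product formula (equivalently Feynman--Kac) makes the kernel of $e^{-t\Hc_+}$ nonnegative and bounded by the free heat kernel, hence the kernel of $(\Hc_++\lambda)^{-1}=\int_0^\infty e^{-\lambda t}e^{-t\Hc_+}\,dt$ is nonnegative and dominated pointwise by $\frac{e^{-\sqrt\lambda|x-y|}}{4\pi|x-y|}$ for every $\lambda\ge 0$. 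Then I would expand $(\Hc+\lambda)^{-1}=\sum_{n\ge 0}(\Hc_++\lambda)^{-1}[W(\Hc_++\lambda)^{-1}]^n$, split each exponential factor $e^{-\sqrt\lambda|\cdot|}$ into one part that recombines through the triangle inequality into $e^{-c\sqrt\lambda|x-y|}$ and one part that is absorbed into a Yukawa-improved Kato norm, and sum the geometric-type series using $\sup_x\int\frac{W(y)}{4\pi|x-y|}\,dy=\frac{\|V_-\|_{\Kc}}{4\pi}<1$. This yields, for some $C,c>0$ independent of $\lambda$,
\[
\left|\,[(\Hc+\lambda)^{-1}](x,y)\,\right|\le C\,\frac{e^{-c\sqrt\lambda|x-y|}}{|x-y|},\qquad \lambda\ge 0.
\]
(Alternatively one can simply invoke the classical Gaussian upper bound $|e^{-t\Hc}(x,y)|\lesssim t^{-3/2}e^{-c|x-y|^2/t}$ for Schr\"odinger semigroups with small-negative-part Kato potentials, which gives the same conclusions below.)

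Granting this, I would use the subordination identity $\Hc^{-\gamma/2}=\frac{\sin(\pi\gamma/2)}{\pi}\int_0^\infty\lambda^{-\gamma/2}(\Hc+\lambda)^{-1}\,d\lambda$ for $0<\gamma<2$ (the endpoint $\gamma=2$ being the $\lambda=0$ resolvent, handled directly by the bound above) to see that the kernel of $\Hc^{-\gamma/2}$ is bounded by $C\int_0^\infty\lambda^{-\gamma/2}\frac{e^{-c\sqrt\lambda|x-y|}}{|x-y|}\,d\lambda=C'|x-y|^{-(3-\gamma)}$, the identity following from the rescaling $\lambda\mapsto\lambda/|x-y|^2$ and the exponential decay being precisely what makes the $\lambda$-integral converge at $\infty$ when $\gamma<2$. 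Thus $\Hc^{-\gamma/2}$ has a kernel dominated by the Riesz potential kernel of order $\gamma$, and the Hardy--Littlewood--Sobolev inequality---whose admissible exponents are exactly $1<r<q<\infty$ with $\frac1q=\frac1r-\frac{\gamma}{3}$, equivalently $1<r<3/\gamma$---gives $\Hc^{-\gamma/2}:L^r\to L^q$, i.e. $\|f\|_{L^q}\lesssim\|\Lambda^\gamma f\|_{L^r}$. For the inhomogeneous estimate I would instead write $(1+\Hc)^{-\gamma/2}=\frac{\sin(\pi\gamma/2)}{\pi}\int_1^\infty(\mu-1)^{-\gamma/2}(\Hc+\mu)^{-1}\,d\mu$, whose kernel is $\lesssim|x-y|^{-(3-\gamma)}$ near the diagonal and decays exponentially for $|x-y|\to\infty$; splitting it into its near-diagonal part (handled by Hardy--Littlewood--Sobolev exactly as above) and its exponentially small tail (handled by Young's inequality, the tail lying in $L^s$ with $\frac1s=1-\frac1r+\frac1q$) yields $(1+\Hc)^{-\gamma/2}:L^r\to L^q$, i.e. $\|f\|_{L^q}\lesssim\|\scal{\Lambda}^\gamma f\|_{L^r}$.

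The step I expect to be the main obstacle is the uniform-in-$\lambda$ resolvent kernel bound: the positive part $V_+$ carries no smallness, so one cannot perturb directly off $-\Delta$, but it enters with a favourable sign and can be absorbed into $\Hc_+$ by positivity, after which the Neumann series in $W=-V_-$ converges precisely because $\|V_-\|_{\Kc}<4\pi$. The delicate point there is to retain a genuine (if slightly degraded) Yukawa-type decay $e^{-c\sqrt\lambda|x-y|}$ through the iteration, since that decay is what both makes the subordination integral converge at infinity and supplies the exponential tail needed for the Bessel-potential kernel. Everything else---the reduction via $g=\Lambda^\gamma f$, the subordination identities, and the final applications of Hardy--Littlewood--Sobolev and Young---is routine.
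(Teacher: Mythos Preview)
The paper does not prove this lemma itself; it is quoted from \cite{Hong}. The route taken in \cite{Hong}, and implicitly here through the lemma immediately following, is to first establish the norm equivalence $\|\Lambda^\gamma f\|_{L^r}\sim\||\nabla|^\gamma f\|_{L^r}$ (from a Gaussian heat-kernel upper bound on $e^{-t\Hc}$ together with the associated $L^r$ functional calculus) and then apply the classical Sobolev embedding $\dot W^{\gamma,r}\hookrightarrow L^q$. Your route is genuinely different and more direct for this particular inequality: you bypass the norm equivalence entirely, dominate the integral kernel of $\Hc^{-\gamma/2}$ pointwise by the Riesz kernel $|x-y|^{-(3-\gamma)}$, and finish with Hardy--Littlewood--Sobolev. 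This yields a self-contained proof of the embedding without the spectral-multiplier machinery, at the cost of not producing the equivalence itself (which the paper needs elsewhere). One caution: in your Neumann-series derivation of the uniform resolvent bound, the splitting of the exponentials handles the decay $e^{-c\sqrt\lambda|x-y|}$ but does not by itself recover the singular factor $|x-y|^{-1}$ through the iteration; doing so via the $3G$ inequality $G_0(x,z)G_0(z,y)\le G_0(x,y)\bigl(G_0(x,z)+G_0(z,y)\bigr)$ costs a factor of $2$ per step and only sums when $\|V_-\|_{\Kc}<2\pi$. Your parenthetical fallback---the Gaussian upper bound on $e^{-t\Hc}$ combined with $\Hc^{-\gamma/2}=\Gamma(\gamma/2)^{-1}\int_0^\infty t^{\gamma/2-1}e^{-t\Hc}\,dt$---is the clean way to close the argument under the stated hypothesis $\|V_-\|_{\Kc}<4\pi$.
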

	
	\begin{lemma} [Equivalence of Sobolev spaces \cite{Hong}] Let $V:\R^3 \rightarrow \R$ satisfy \eqref{cond-1-V} and \eqref{cond-2-V}. Then we have 
		\[
		\|f\|_{\dot{W}^{\gamma,r}_V} \sim \|f\|_{\dot{W}^{\gamma,r}}, \quad \|f\|_{W^{\gamma,r}_V} \sim \|f\|_{W^{\gamma,r}},
		\]
		where $1<r<\frac{3}{\gamma}$ and $0\leq \gamma \leq 2$.
	\end{lemma}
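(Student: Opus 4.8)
The plan is to reduce the two stated equivalences to the $L^r$-boundedness, for $0\le\gamma\le2$ and $1<r<3/\gamma$, of the operators
\[
\mathcal A_\gamma:=\Hc^{\gamma/2}(-\Delta)^{-\gamma/2},\qquad \mathcal B_\gamma:=(-\Delta)^{\gamma/2}\Hc^{-\gamma/2},
\]
together with their inhomogeneous analogues (obtained on replacing $\Hc,-\Delta$ by $1+\Hc,1-\Delta$): writing $f=(-\Delta)^{-\gamma/2}g$ shows that $\mathcal A_\gamma\colon L^r\to L^r$ is the same assertion as $\|f\|_{\dot{W}^{\gamma,r}_V}\lesssim\|f\|_{\dot{W}^{\gamma,r}}$, and $\mathcal B_\gamma$ gives the reverse inequality, while the case $\gamma=0$ is trivial. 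Two facts coming from \eqref{cond-1-V}--\eqref{cond-2-V} will be used. First, the Schr\"odinger semigroup $e^{-t\Hc}$ is uniformly bounded on each $L^r$, $1\le r\le\infty$, and its integral kernel obeys the Gaussian bound $0\le e^{-t\Hc}(x,y)\lesssim t^{-3/2}e^{-c|x-y|^2/t}$ for $t>0$ --- the classical theory of Kato-class Schr\"odinger semigroups, the constant $4\pi$ in \eqref{cond-2-V} being exactly the threshold that prevents the negative part of $V$ from destroying this bound, since $(4\pi|x-y|)^{-1}$ is the free resolvent kernel in $\R^3$; by the standard spectral-multiplier machinery this implies that $\Hc$ has a bounded $H^\infty$-calculus on $L^r$ for $1<r<\infty$, so that in particular $(\Hc+\lambda)^{-1}$ has $L^r$-operator norm $\lesssim\lambda^{-1}$ uniformly in $\lambda>0$ and the imaginary powers $\Hc^{iy}$ extend to bounded operators on $L^r$ with operator norm growing at most subexponentially in $|y|$ (the same being trivially true for $-\Delta$). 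Second, $0$ is neither an eigenvalue nor a resonance of $\Hc$, see \cite{Hong}.

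I would first settle the endpoint $\gamma=2$. The bound for $\mathcal A_2$ is the inequality $\|Vf\|_{L^r}\lesssim\|\Delta f\|_{L^r}$ for $1<r<3/2$, which follows from H\"older, $\|Vf\|_{L^r}\le\|V\|_{L^{3/2}}\|f\|_{L^{r_*}}$ with $\tfrac1{r_*}=\tfrac1r-\tfrac23\in(0,1)$, combined with the Sobolev embedding $\dot{W}^{2,r}\hookrightarrow L^{r_*}$; this uses \eqref{cond-1-V}. The bound for $\mathcal B_2$ --- equivalently, the $L^r$-boundedness of $(-\Delta)\Hc^{-1}=I-V\Hc^{-1}$ --- is the delicate one. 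Since $(-\Delta+V)^{-1}=(I+(-\Delta)^{-1}V)^{-1}(-\Delta)^{-1}$, one has $V\Hc^{-1}=(I+V(-\Delta)^{-1})^{-1}\,V(-\Delta)^{-1}$, where $V(-\Delta)^{-1}\colon L^r\to L^r$ is bounded for $1<r<3/2$ by H\"older and Sobolev exactly as in the $\mathcal A_2$ step, and is compact because $V\in L^{3/2}$; hence, by the Fredholm alternative, $(I+V(-\Delta)^{-1})^{-1}$ is bounded on $L^r$ as soon as $(I+V(-\Delta)^{-1})g=0$ has no nonzero solution $g\in L^r$ --- which, translated back via $g=-\Delta h$, says precisely that there is no nonzero $h$ in the relevant space with $\Hc h=0$, i.e.\ that $0$ is neither an eigenvalue nor a resonance of $\Hc$. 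I expect this step to be the main obstacle.

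Finally, for $0<\gamma<2$ I would interpolate between the endpoints. Applying Stein's complex interpolation theorem to the analytic family $z\mapsto\Hc^{z}(1-\Delta)^{-z}$ on the strip $0\le\rea z\le1$ --- bounded on $L^{p_0}$ for every $p_0\in(1,\infty)$ on the line $\rea z=0$ by the imaginary-power estimates, and bounded on $L^{p_1}$ for every $p_1\in(1,3/2)$ on the line $\rea z=1$ via the factorization $\Hc^{1+iy}(1-\Delta)^{-1-iy}=\Hc^{iy}\,[\Hc(1-\Delta)^{-1}]\,(1-\Delta)^{-iy}$ and the $\mathcal A_2$ bound --- yields, for $\theta=\gamma/2$, boundedness on $L^r$ with $\tfrac1r=\tfrac{1-\theta}{p_0}+\tfrac{\theta}{p_1}$; as $p_0$ ranges over $(1,\infty)$ and $p_1$ over $(1,3/2)$ this sweeps out precisely the range $1<r<3/\gamma$. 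Interpolating $z\mapsto(1-\Delta)^{z}\Hc^{-z}$ in the same way, now with the $\mathcal B_2$ bound in the middle, gives $\mathcal B_\gamma$; and the homogeneous estimates follow by repeating the argument with $1+\Hc,1-\Delta$ replaced by $\Hc,-\Delta$, the imaginary-power bounds and the $\gamma=2$ endpoints being equally available there. Alternatively, one may treat $0<\gamma<2$ directly by inserting the resolvent identity $(\Hc+\lambda)^{-1}-(-\Delta+\lambda)^{-1}=-(\Hc+\lambda)^{-1}V(-\Delta+\lambda)^{-1}$ into the subordination formula, which gives
\[
\mathcal A_\gamma-I=c_\gamma\int_0^\infty\lambda^{\gamma/2}\,(\Hc+\lambda)^{-1}\,V\,(-\Delta+\lambda)^{-1}(-\Delta)^{-\gamma/2}\,d\lambda,
\]
and then estimating the $L^r$-norm of the integrand by distributing H\"older and Sobolev exponents across $V$ and using the $\lambda^{-1}$ decay of the two resolvents; the $\lambda$-integral is seen to converge precisely in the range $1<r<3/\gamma$.
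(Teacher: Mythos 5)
The paper never proves this lemma: it is imported verbatim from Hong \cite{Hong}, so there is no in-paper argument to compare against; the relevant comparison is with the standard proof in the cited source. Your reconstruction follows essentially that route --- reduce the equivalences to $L^r$-boundedness of $\Hc^{\gamma/2}(-\Delta)^{-\gamma/2}$ and $(-\Delta)^{\gamma/2}\Hc^{-\gamma/2}$, settle the endpoint $\gamma=2$ by H\"older and Sobolev using $V\in L^{3/2}$, obtain bounded imaginary powers from the uniform Gaussian heat-kernel bound (which is exactly where $\|V_-\|_{\Kc}<4\pi$ enters) via a spectral multiplier theorem, and then sweep out $1<r<3/\gamma$ by Stein's complex interpolation --- and it is correct in outline, granting the standard facts you quote. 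The one place you genuinely deviate, and where your argument is more fragile than necessary, is the reverse endpoint $(-\Delta)\Hc^{-1}=I-V\Hc^{-1}$: your Fredholm argument for $I+V(-\Delta)^{-1}$ on $L^r$ forces you to exclude nontrivial solutions of $\Hc h=0$ with $h$ only in $\dot{W}^{2,r}\hookrightarrow L^{r_*}$, $\tfrac{1}{r_*}=\tfrac{1}{r}-\tfrac{2}{3}$, $1<r<\tfrac32$; the form bound $\int |V_-||h|^2\,dx\le\tfrac{\|V_-\|_{\Kc}}{4\pi}\|\nabla h\|^2_{L^2}$ kills an $H^1$ zero eigenvalue, but ruling out these more general resonance-type solutions needs a bootstrap that you leave entirely to the citation, and the paper itself only records ``no eigenvalues.'' This detour is avoidable: the Gaussian bound you already invoked gives, after integration in $t$, the kernel estimate $0\le \Hc^{-1}(x,y)\lesssim |x-y|^{-1}$, whence $\|V\Hc^{-1}f\|_{L^r}\le\|V\|_{L^{3/2}}\|\Hc^{-1}f\|_{L^{r_*}}\lesssim \|f\|_{L^r}$ by H\"older and Hardy--Littlewood--Sobolev; this is shorter, uses no spectral black box beyond the heat-kernel bound, and is the mechanism of the cited proof. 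Your closing ``alternative'' via the subordination formula is only a plausibility sketch --- the convergence of the $\lambda$-integral on the full range $1<r<3/\gamma$ requires splitting small and large $\lambda$ and resolvent smoothing estimates you do not spell out --- so it should not be offered as a stand-alone proof.
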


	\section{Local theory}
	\label{S3}
	\setcounter{equation}{0}
	
	\subsection{Local well-posedness}
	We recall the following local well-posedness and small data scattering for \eqref{NLS-V} due to Hamano-Ikeda \cite{HI}.
	
	\begin{lemma} [Local well-posedness \cite{HI}]
		Let $0<\alpha<4$. Let $V:\R^3 \rightarrow \R$ satisfy \eqref{cond-1-V} and \eqref{cond-2-V}. Let $u_0 \in H^1$. Then there exists $T=T(\|u_0\|_{H^1})>0$ and a unique solution 
		\[
		u \in C([-T,T],H^1) \cap L^q([-T,T],W^{1,r}_V)
		\]
		to \eqref{NLS-V} for any Schr\"odinger admissible pair $(q,r)$.
	\end{lemma}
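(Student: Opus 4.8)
The plan is a standard contraction mapping argument applied to the Duhamel formulation, in which the potential enters only through the propagator $e^{-it\Hc}$ (the term $Vu$ needing no separate treatment) and through the equivalence of Sobolev norms, while the genuine work is a subcritical nonlinear estimate. Writing \eqref{NLS-V} as
\[
u(t)=e^{-it\Hc}u_0\mp i\int_0^t e^{-i(t-s)\Hc}\big(|u|^\alpha u\big)(s)\,ds=:\Phi(u)(t),
\]
I would fix a convenient Schr\"odinger admissible pair $(\gamma,\rho)$ (for instance $\rho=\alpha+2$, which is admissible in $\R^3$ precisely because $0<\alpha<4$ forces $\rho\in(2,6)$, modulo the caveat below) and, for parameters $T,M>0$ to be chosen, realize $\Phi$ as a self-map and a contraction on
\[
X(T,M):=\Big\{u\in C([-T,T],H^1)\cap L^\gamma([-T,T],W^{1,\rho}_V):\ \|u\|_{L^\infty_tH^1}+\|u\|_{L^\gamma_tW^{1,\rho}_V}\le M\Big\},
\]
which is complete for the distance $d(u,v):=\|u-v\|_{L^\infty_tL^2}+\|u-v\|_{L^\gamma_tL^\rho}$. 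I use a distance without derivatives so that the fixed-point argument also covers $0<\alpha<1$, where $z\mapsto|z|^\alpha z$ is merely H\"older continuous.

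By the homogeneous and inhomogeneous Strichartz estimates recalled in Section~\ref{S2}, together with the equivalence of Sobolev norms (which gives $H^1_V\simeq H^1$ and, for $\sigma$ in the admissible range, $W^{1,\sigma}_V\simeq W^{1,\sigma}$), bounding $\|\Phi(u)\|_{X(T,M)}$ and $d(\Phi(u),\Phi(v))$ reduces to controlling $|u|^\alpha u$, respectively $|u|^\alpha u-|v|^\alpha v$, in a dual Strichartz norm of the form $L^{\gamma'}([-T,T],W^{1,\rho'})$. For this I would use the fractional product/chain rule followed by H\"older in the space variable, placing $\alpha$ of the factors in $L^{\alpha+2}_x$, which is controlled by $H^1_x$ through the Sobolev embedding since $2<\alpha+2<6$ in $\R^3$; schematically $\big\||u|^\alpha u\big\|_{W^{1,\rho'}_x}\lesssim\|u\|_{L^{\alpha+2}_x}^{\alpha}\|u\|_{W^{1,\rho}_x}$, and similarly $\big| |z|^\alpha z-|w|^\alpha w\big|\lesssim(|z|^\alpha+|w|^\alpha)|z-w|$ for the difference. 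Then H\"older in time, with the $\alpha$ low-order factors in $L^\infty_t$ and the remaining factor in $L^\gamma_t$, produces a gain $T^\theta$ with $\theta=\theta(\alpha)>0$, the positivity of $\theta$ being exactly the $H^1$-subcriticality $\gamc=\tfrac32-\tfrac2\alpha<1$ (equivalently $\gamma>2$). Altogether one obtains $\|\Phi(u)\|_X\lesssim\|u_0\|_{H^1}+T^\theta M^{\alpha+1}$ and $d(\Phi(u),\Phi(v))\lesssim T^\theta M^\alpha\,d(u,v)$, so choosing $M\sim\|u_0\|_{H^1}$ and then $T=T(\|u_0\|_{H^1})>0$ small makes $\Phi$ a contraction; its unique fixed point is the claimed solution, and uniqueness in the class $C([-T,T],H^1)\cap L^\gamma([-T,T],W^{1,\rho}_V)$ follows by the usual shrink-the-interval/connectedness argument.

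Finally, to promote the solution to $u\in L^q([-T,T],W^{1,r}_V)$ for \emph{every} admissible pair $(q,r)$, I would apply the Strichartz estimates once more to the Duhamel identity already satisfied by $u$, using the output pair $(q,r)$ on the left while keeping the nonlinearity measured in the fixed dual pair on the right, whose norm was shown to be finite in the previous step. The step I expect to be the main obstacle is precisely the nonlinear estimate in the dual Strichartz space: the admissible pair(s) must be selected with some care so that the subcritical product/chain-rule estimate closes uniformly over the whole range $\tfrac43<\alpha<4$ (indeed $0<\alpha<4$), and so that the exponents that occur stay within the range for which $W^{1,\sigma}_V\simeq W^{1,\sigma}$ holds — if $\alpha+2$ falls outside that range one must instead split the nonlinear factors across two admissible pairs with the derivative-carrying factor in a space with exponent below the threshold. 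Apart from this bookkeeping, the potential plays no essential role in the local theory.
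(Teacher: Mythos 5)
The paper does not prove this lemma itself (it refers to \cite{HI}, Lemma 4.2), and your contraction-mapping argument via the Duhamel formula, Strichartz estimates for $e^{-it\Hc}$, and the equivalence of Sobolev norms is essentially the same approach used there. The caveat you flag is the real point and you resolve it correctly: since $\|f\|_{W^{1,r}_V}\sim\|f\|_{W^{1,r}}$ only for $r<3$ while $\alpha+2>3$ in the relevant range, the derivative-carrying factor must be placed in an admissible space with spatial exponent below $3$ (e.g. pairs such as $\left(10,\tfrac{30}{13}\right)$, as the paper itself uses later), and with such a splitting the subcritical estimate closes with a gain $T^\theta$, $\theta>0$, over the whole range $0<\alpha<4$.
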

	
	\begin{lemma} [Small data scattering \cite{HI}] \label{lem-small-scat}
		Let $\frac{4}{3}<\alpha<4$. Let $V: \R^3 \rightarrow \R$ satisfy \eqref{cond-1-V} and \eqref{cond-2-V}. Suppose $u$ is a global solution to \eqref{NLS-V} satisfying 
		\[
		\|u\|_{L^\infty(\R, H^1)} \leq A
		\]
		for some constant $A>0$. Then there exists $\delta=\delta(A)>0$ such that if
		\[
		\|e^{-i(t-T)\Hc} u(T)\|_{L^q([T,\infty)\times \R^3)} <\delta
		\]
		for some $T>0$, where 
		\begin{align} \label{defi-q}
		q:=\frac{5\alpha}{2}, 
		\end{align}
		then $u$ scatters in $H^1$ forward in time. 
	\end{lemma}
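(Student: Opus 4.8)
The plan is to upgrade the smallness of the free evolution in the scattering norm to a global space-time bound for the nonlinear solution, and then to read off the asymptotic state from Duhamel's formula. Fix $I=[T,\infty)$ and recall
\[
u(t)=e^{-i(t-T)\Hc}u(T)\mp i\int_T^t e^{-i(t-s)\Hc}\big(|u|^\alpha u\big)(s)\,ds,\qquad t\in I.
\]
Alongside the scattering norm $\|u\|_{L^q(J\times\R^3)}$ (here $q=\frac{5\alpha}{2}$), I would work with an $\Hc$-adapted Strichartz norm $\|u\|_{\Sh(J)}:=\|u\|_{L^\infty(J,H^1)}+\|u\|_{L^a(J,W^{1,r}_V)}$ for a fixed admissible pair $(a,r)$ with $r<3$; by the equivalence of Sobolev spaces recalled above the $\dot{W}^{1,r}_V$-norm is comparable to the $\dot{W}^{1,r}$-norm, so a derivative may be passed freely between $u$ and the nonlinearity. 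The aim is to prove $\|u\|_{L^q(I\times\R^3)}<\infty$ and $\|u\|_{\Sh(I)}<\infty$, from which scattering is immediate.

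The ingredients are two linear estimates and their matching nonlinear estimates. The first linear estimate is a Kato-type inhomogeneous bound
\[
\Big\|\int_T^t e^{-i(t-s)\Hc}F(s)\,ds\Big\|_{L^q(J\times\R^3)}\lesssim \|F\|_{L^{q/(\alpha+1)}(J\times\R^3)},
\]
obtained by interpolating the dispersive estimate \eqref{dis-est} with the $L^2\to L^2$ bound and running a Hardy--Littlewood--Sobolev inequality in the time variable; its nonlinear counterpart is simply the identity $\||u|^\alpha u\|_{L^{q/(\alpha+1)}(J\times\R^3)}=\|u\|_{L^q(J\times\R^3)}^{\alpha+1}$. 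The second linear estimate is the $\Hc$-Strichartz estimate bounding $\|u\|_{\Sh(J)}$ by $\|u(T)\|_{H^1}$ plus a dual Strichartz norm $\|\cdot\|_{N(J)}$ of $\langle\nabla\rangle(|u|^\alpha u)$; its nonlinear counterpart uses the pointwise bound $|\nabla(|u|^\alpha u)|\lesssim|u|^\alpha|\nabla u|$, Hölder's inequality, the admissibility relations and the embedding $W^{1,r}_V\hookrightarrow L^r$ to give
\[
\big\|\langle\nabla\rangle(|u|^\alpha u)\big\|_{N(J)}\lesssim \|u\|_{L^q(J\times\R^3)}^{\alpha}\,\|u\|_{\Sh(J)}.
\]
In both nonlinear estimates the scattering norm carries a power of at least $\alpha>\tfrac43>0$ that is available for the bootstrap. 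The delicate point — which I expect to be the main obstacle — is the first linear estimate: since $q=\frac{5\alpha}{2}$ is \emph{not} Schrödinger-admissible, one must work with non-admissible exponents, and checking that the Hardy--Littlewood--Sobolev argument (and the accompanying Hölder exponents) goes through for the whole intercritical range $\frac{4}{3}<\alpha<4$ — which is exactly where the two endpoints of the range enter — is the technical heart of the matter; this is carried out in detail in \cite{HI}.

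Granting these ingredients, the global bounds follow in two bootstrap steps. First, for $\tau\in I$ put $X(\tau):=\|u\|_{L^q([T,\tau]\times\R^3)}$; by the local theory $X$ is finite for $\tau$ near $T$ and continuous and nondecreasing in $\tau$, and inserting the first pair of estimates into Duhamel's formula gives
\[
X(\tau)\le C_0\big\|e^{-i(t-T)\Hc}u(T)\big\|_{L^q([T,\infty)\times\R^3)}+C_1X(\tau)^{\alpha+1}\le C_0\delta+C_1X(\tau)^{\alpha+1}.
\]
If $\delta=\delta(A)$ is small enough then a standard continuity argument forces $X(\tau)\le 2C_0\delta$ for every $\tau\in I$, hence $\|u\|_{L^q(I\times\R^3)}\le 2C_0\delta$. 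Second, the $\Hc$-Strichartz estimate and the second nonlinear estimate on $[T,\tau]$ yield $\|u\|_{\Sh([T,\tau])}\le C(A)+C_2(2C_0\delta)^{\alpha}\|u\|_{\Sh([T,\tau])}$; since $\|u\|_{\Sh([T,\tau])}$ is finite for every finite $\tau$ (again by the local theory, using that $u$ is global) and $C_2(2C_0\delta)^{\alpha}<\tfrac12$ after further shrinking $\delta$, absorbing the last term and letting $\tau\to\infty$ gives $\|u\|_{\Sh(I)}\le 2C(A)<\infty$.

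Finally, define
\[
u_+:=e^{iT\Hc}u(T)\mp i\int_T^\infty e^{is\Hc}\big(|u|^\alpha u\big)(s)\,ds.
\]
Using Strichartz estimates, the second nonlinear estimate and $\|u\|_{\Sh(I)}<\infty$ one checks that the integral converges in $H^1$, so $u_+\in H^1$, and that for $t\ge T$,
\[
\|u(t)-e^{-it\Hc}u_+\|_{H^1}=\Big\|\int_t^\infty e^{-i(t-s)\Hc}\big(|u|^\alpha u\big)(s)\,ds\Big\|_{H^1}\lesssim \|u\|_{L^q([t,\infty)\times\R^3)}^{\alpha}\,\|u\|_{\Sh([t,\infty))},
\]
which tends to $0$ as $t\to+\infty$, being the tail of a convergent integral. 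Hence $u$ scatters in $H^1$ forward in time; the backward statement is entirely analogous.
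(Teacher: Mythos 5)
Your overall strategy (close a bootstrap in the scattering norm $L^q_{t,x}$, then in a full Strichartz norm, then read off $u_+$ from Duhamel) is the standard one, but the estimate on which your first bootstrap rests is not available, and your proposed derivation of it does not produce it. The diagonal ``Kato-type'' bound
\[
\Big\|\int_T^t e^{-i(t-s)\Hc}F(s)\,ds\Big\|_{L^q(J\times\R^3)}\lesssim \|F\|_{L^{q/(\alpha+1)}(J\times\R^3)},\qquad q=\tfrac{5\alpha}{2},
\]
cannot be obtained by interpolating \eqref{dis-est} with the $L^2$ bound and applying Hardy--Littlewood--Sobolev in time: that argument uses $\|e^{-i\tau\Hc}\|_{L^{q'}\to L^{q}}\lesssim|\tau|^{-3(\frac12-\frac1q)}$ and therefore forces the \emph{dual} spatial exponent $q'$ on the right-hand side, whereas $q'=q/(\alpha+1)$ only at the endpoint $\alpha=\tfrac43$. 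Worse, the estimate itself is false as a global-in-time bound once $\alpha\geq\tfrac{16}{9}$, hence in particular in the cubic case $\alpha=2$, where it would read $L^{5/3}_{t,x}\to L^{5}_{t,x}$. Indeed, writing the dual pair as $\tilde q=\tilde r=\tfrac{5\alpha}{3\alpha-2}$, a necessary condition for the retarded estimate is the acceptability condition $\tfrac1{\tilde q}<3\bigl(\tfrac12-\tfrac1{\tilde r}\bigr)$: testing the adjoint operator on data supported in $0\le t\le 1$, the output decays in $L^{\tilde r}_x$ exactly like $|s|^{-3(\frac12-\frac1{\tilde r})}$ as $s\to-\infty$, and this is in $L^{\tilde q}_s$ only under that condition, which here is equivalent to $\alpha<\tfrac{16}{9}$. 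Since this failure already occurs for the free propagator, no interpolation scheme based on the dispersive estimate can rescue it for $e^{-it\Hc}$. Consequently your inequality $X(\tau)\le C_0\delta+C_1X(\tau)^{\alpha+1}$ is unjustified, and the attribution of this step to \cite{HI} is not accurate: no such non-admissible diagonal estimate is proved there (nor in this paper, which does not reprove the lemma but cites \cite{HI}).

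The way the argument is actually run (in \cite{HI}, and visibly in this paper's proof of Lemma \ref{lem-scat-crite}) is to control the scattering norm through admissible pairs with derivatives: by Sobolev embedding and the equivalence of Sobolev norms, $\|w\|_{L^q(J\times\R^3)}\lesssim\|\Lambda^{\gamc}w\|_{L^q(J,L^r)}$ with $r=\tfrac{30\alpha}{15\alpha-8}$ and $(q,r)\in S$, and then Strichartz plus the fractional chain rule give
\[
\Big\|\int_T^t e^{-i(t-s)\Hc}(|u|^\alpha u)(s)\,ds\Big\|_{L^q(J\times\R^3)}\lesssim\||\nabla|^{\gamc}(|u|^\alpha u)\|_{L^2(J,L^{6/5})}\lesssim\|u\|^\alpha_{L^q(J\times\R^3)}\,\||\nabla|^{\gamc}u\|_{L^{10}(J,L^{30/13})}.
\]
This couples the scattering norm to a derivative Strichartz norm, so the two bounds must be closed by a single joint continuity argument, not by your sequential scheme (first close $X$ alone, then $\Sh$), which is exactly what the false linear estimate was meant to enable. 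Your second nonlinear estimate, the absorption step for $\Sh$, and the construction of $u_+$ are fine as written; it is the first linear estimate, and the sequential structure built on it, that must be replaced.
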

	We refer the reader to \cite[Lemma 4.2 and Lemma 4.3]{HI} for the proof of the above results.
	
	\subsection{Scattering criteria}
	
	\begin{lemma} [Scattering criteria] \label{lem-scat-crite}
		Let $\frac{4}{3}<\alpha<4$. Let $V:\R^3 \rightarrow \R$ satisfy \eqref{cond-1-V} and \eqref{cond-2-V}. Suppose that $u$ is a global solution to \eqref{NLS-V} satisfying
		\[
		\|u\|_{L^\infty(\R,H^1)} \leq A
		\]
		for some constant $A>0$. Then there exist $\vareps=\vareps(A)>0$ sufficiently small and $T_0=T_0(\vareps,A)>0$ sufficiently large such that if for any $a \in \R$, there exists $t_0 \in (a,a+T_0)$ such that $[t_0-\vareps^{-\sigma},t_0] \subset (a,a+T_0)$ and
		\begin{align} \label{scat-cond-a}
		\|u\|_{L^q([t_0-\vareps^{-\sigma},t_0] \times \R^3)} \lesssim \vareps^{\mu}
		\end{align}
		for some $\sigma, \mu>0$ satisfying 
		\begin{align} \label{cond-sigma-mu}
		\mu \alpha -\frac{\sigma}{10}>0, 
		\end{align}
		where $q$ is as in \eqref{defi-q}, then $u$ scatters in $H^1$ forward in time.
	\end{lemma}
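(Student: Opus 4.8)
The plan is to reduce the statement to the small-data scattering criterion of Lemma~\ref{lem-small-scat}: with $\delta=\delta(A)$ the constant given there, it suffices to produce a single time $T\ge 0$ with
\[
\|e^{-i(t-T)\Hc}u(T)\|_{L^q([T,\infty)\times\R^3)}<\delta .
\]
I would fix $\vareps$ small and $T_0$ large (depending on $A$, $\sigma$, $\mu$) and then invoke the hypothesis — which holds for \emph{every} $a$ — with a parameter $a$ taken as large as convenient for the given solution. This yields $t_0$ and a window $I_0:=[t_0-\vareps^{-\sigma},t_0]\subset(a,a+T_0)$ on which $\|u\|_{L^q(I_0\times\R^3)}\lesssim\vareps^{\mu}$ (this is \eqref{scat-cond-a}); I set $T:=t_0$. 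Duhamel's formula on $I_0$, propagated forward, gives for $t\ge t_0$
\[
e^{-i(t-t_0)\Hc}u(t_0)=e^{-i(t-t_0+\vareps^{-\sigma})\Hc}u(t_0-\vareps^{-\sigma})\ \mp\ i\int_{I_0}e^{-i(t-s)\Hc}\big(|u|^\alpha u\big)(s)\,ds=:\mathrm{I}(t)+\mathrm{II}(t),
\]
and it remains to make $\|\mathrm{I}\|_{L^q([t_0,\infty)\times\R^3)}$ and $\|\mathrm{II}\|_{L^q([t_0,\infty)\times\R^3)}$ each $<\delta/2$.

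For the nonlinear piece $\mathrm{II}$ I would combine the inhomogeneous Strichartz estimate with the Sobolev embedding $\|f\|_{L^q}\lesssim\|f\|_{\dot{W}^{\gamc,r}_V}$ and the equivalence $\dot{W}^{\gamc,r}_V\sim\dot{W}^{\gamc,r}$ for the Schr\"odinger-admissible pair $(q,r)$ singled out by $\gamc=\frac32-\frac2\alpha\in(0,1)$ (cf.\ \eqref{defi-gamc}), then a fractional product rule on $|u|^\alpha u$ and H\"older in time over $I_0$. Distributing the $\alpha+1$ copies of $u$ so that $\alpha$ of them carry the small norm $\|u\|_{L^q(I_0\times\R^3)}$ and the last, together with the $\gamc$ derivative, sits in a Strichartz norm that is $\le C(A)$ by $\|u\|_{L^\infty(\R,H^1)}\le A$, the only place where $|I_0|=\vareps^{-\sigma}$ enters is the H\"older factor, leading to a bound of the form
\[
\|\mathrm{II}\|_{L^q([t_0,\infty)\times\R^3)}\lesssim C(A)\,\vareps^{-\sigma/10}\,\|u\|^{\alpha}_{L^q(I_0\times\R^3)}\lesssim C(A)\,\vareps^{\mu\alpha-\sigma/10},
\]
which is $<\delta/2$ for $\vareps$ small precisely because of \eqref{cond-sigma-mu}.

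For the linear piece $\mathrm{I}$ I would expand $u(t_0-\vareps^{-\sigma})$ once more by Duhamel from time $0$ (the solution is global), obtaining $\mathrm{I}(t)=e^{-it\Hc}u_0\mp i\int_0^{t_0-\vareps^{-\sigma}}e^{-i(t-s)\Hc}(|u|^\alpha u)(s)\,ds$. The first summand lies in $L^q(\R\times\R^3)$ with norm $\lesssim\|u_0\|_{\dot{H}^{\gamc}}\lesssim A$ (Strichartz followed by the critical Sobolev embedding, $\gamc\in(0,1)$), so its tail on $[t_0,\infty)$ is $<\delta/4$ once $t_0$ — hence $a$ — is taken large. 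For the remaining integral the key observation is that $t-s\ge\vareps^{-\sigma}$ for all $t\ge t_0$ and $0\le s\le t_0-\vareps^{-\sigma}$, so the dispersive estimate \eqref{dis-est}, together with the $L^{q'}\!\to L^{q}$ bounds obtained from it by interpolation with the trivial $L^2\to L^2$ bound, applies with a large time separation; since moreover $\||u|^\alpha u(s)\|_{L^{q'}}=\|u(s)\|^{\alpha+1}_{L^{(\alpha+1)q'}}\lesssim A^{\alpha+1}$ (using $H^1\hookrightarrow L^{(\alpha+1)q'}$, valid because $(\alpha+1)q'\in[2,6]$ for $\frac43<\alpha<4$), and a short computation gives the net space--time decay exponent $\frac32-\frac4q=\frac32-\frac{8}{5\alpha}>0$ on the whole intercritical range, one can bound this integral by $C(A)\,\vareps^{\kappa}$ for some $\kappa>0$, hence $<\delta/4$. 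Adding the three contributions gives $\|e^{-i(t-t_0)\Hc}u(t_0)\|_{L^q([t_0,\infty)\times\R^3)}<\delta$, and Lemma~\ref{lem-small-scat} yields forward scattering; the backward-in-time statement is identical.

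I expect the treatment of the linear piece $\mathrm{I}$, and specifically of the Duhamel tail $\int_0^{t_0-\vareps^{-\sigma}}e^{-i(t-s)\Hc}(|u|^\alpha u)\,ds$, to be the real obstacle: the uniform $H^1$ bound gives no decay of $e^{-it\Hc}f$ in $L^q_x$, so all of the smallness has to be squeezed out of the time gap $\vareps^{-\sigma}$ through the dispersive estimate, and in the intercritical regime the pointwise decay rate $\frac32(1-\frac2q)$ may be strictly less than $1$. Making the resulting space--time integral both convergent and quantitatively small then requires interpolating \eqref{dis-est} against the Strichartz estimates (an inhomogeneous, non-admissible estimate), and balancing this decay against the H\"older loss on $I_0$ is exactly what forces the compatibility condition \eqref{cond-sigma-mu}.
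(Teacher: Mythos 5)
Your reduction to Lemma \ref{lem-small-scat}, your choice $T=t_0$ with $a$ taken large, and your treatment of the recent Duhamel piece $\mathrm{II}$ (Strichartz, Sobolev equivalence, H\"older in time on $I_0$ giving $\vareps^{\mu\alpha-\sigma/10}$) coincide with the paper's argument, as does the splitting of $\mathrm{I}$ into $e^{-it\Hc}u_0$ plus the tail $F_1=\mp i\int_0^{t_0-\vareps^{-\sigma}}e^{-i(t-s)\Hc}(|u|^\alpha u)(s)\,ds$. The gap is exactly where you yourself point: your bound on $F_1$ does not close on the stated range of $\alpha$. Using the interpolated dispersive estimate $L^{q'}\to L^{q}$ with decay $(t-s)^{-\frac32(1-\frac2q)}$ and Minkowski gives, as you say, the integrand $(t_0-s)^{-(\frac32-\frac4q)}$ in $s$; but for $\int_{\vareps^{-\sigma}}^{t_0}\tau^{-(\frac32-\frac4q)}\,d\tau$ to be $\lesssim\vareps^{\kappa}$ uniformly in the arbitrarily large $t_0$ you need this exponent to exceed $1$, not merely be positive. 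That forces $q=\frac{5\alpha}{2}>8$, i.e. $\alpha>\frac{16}{5}$; and for $\alpha\le\frac{12}{5}$ (where $\frac32-\frac3q\le1$) even the pointwise-in-$t$ integral over $s\in[0,t_0-\vareps^{-\sigma}]$ diverges as $t_0\to\infty$. So the argument as written fails for most of the intercritical range, including $\alpha=2$, and the fallback you mention (interpolating the dispersive estimate against Strichartz) is asserted rather than executed --- it is precisely the nontrivial step.

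The paper closes this differently: it never estimates the tail by integrating dispersive bounds in a single spatial Lebesgue norm. It interpolates $\|F_1\|_{L^q_{t,x}}\le\|F_1\|^{\theta}_{L^k_tL^l_x}\|F_1\|^{1-\theta}_{L^p_tL^\infty_x}$ with $(k,l)$ Schr\"odinger admissible; the admissible norm is merely bounded (not small), obtained from the identity $F_1(t)=e^{-i(t-T+\vareps^{-\sigma})\Hc}u(T-\vareps^{-\sigma})-e^{-it\Hc}u_0$, which exhibits $F_1$ as a difference of two free evolutions of uniformly bounded $H^1$ data and thus bypasses the divergent $s$-integral entirely. All the smallness is extracted from the $L^\infty_x$ piece, where the full $L^1\to L^\infty$ decay $(t-s)^{-3/2}$ is integrable in $s$ and gives $\|F_1(t)\|_{L^\infty}\lesssim(t-T+\vareps^{-\sigma})^{-1/2}$, hence $\|F_1\|_{L^p_tL^\infty_x}\lesssim\vareps^{\sigma(\frac12-\frac1p)}$ for $p>2$; the exponents $\theta,k,l,p$ are then chosen (with a case split at $\alpha=2$) to satisfy the admissibility and $p>2$ constraints on the whole range $\frac43<\alpha<4$. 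To repair your proof you would need to carry out this (or an equivalent) interpolation argument for the tail term.
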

	
	\begin{proof}
		By Lemma $\ref{lem-small-scat}$, it suffices to show that there exists $T>0$ such that 
		\begin{align} \label{small-cond-T}
		\|e^{-i(t-T)\Hc} u(T)\|_{L^q([T,\infty)\times \R^3)} \lesssim \vareps^\vartheta
		\end{align}
		for some $\vartheta>0$.
		
		To show \eqref{small-cond-T}, we first write
		\[
		e^{-i(t-T)\Hc} u(T)=e^{-it\Hc}u_0 \mp i \int_0^T e^{-i(t-s)\Hc} |u(s)|^\alpha u(s) ds.
		\]
		By Sobolev embedding and Strichartz estimates, we have
		\[
		\|e^{-it\Hc} u_0\|_{L^q(\R\times \R^3)} \lesssim \|\Lambda^{\gamc} e^{-it\Hc} u_0\|_{L^q(\R,L^r)} \lesssim \|\Lambda^{\gamc} u_0\|_{L^2} \lesssim \|u_0\|_{H^1}<\infty,
		\]
		where
		\begin{align} \label{defi-r}
		r:=\frac{30\alpha}{15\alpha-8}
		\end{align}
		is so that $(q,r)\in S$. By the monotone convergence theorem, there exists $T_1>0$ sufficiently large such that for any $T>T_1$,
		\begin{align} \label{est-line-part}
		\|e^{-it\Hc} u_0\|_{L^q([T,\infty)\times \R^3)} \lesssim \vareps.
		\end{align}
		Taking $a=T_1$ and $T=t_0$ with $a$ and $t_0$ as in \eqref{scat-cond-a}, we write
		\begin{align*}
		i \int_0^T e^{-i(t-s)\Hc} |u(s)|^\alpha u(s) ds &= i \int_I e^{-i(t-s)\Hc} |u(s)|^\alpha u(s) ds + i\int_J e^{-i(t-s)\Hc} |u(s)|^\alpha u(s) ds \\
		&=:F_1 (t) + F_2(t),
		\end{align*}
		where $I:= [0,T-\vareps^{-\sigma}]$ and $J:= [T-\vareps^{-\sigma},T]$. 
		
		By Sobolev embedding, Strichartz estimates, \eqref{scat-cond-a} and \eqref{cond-sigma-mu}, we see that
		\begin{align} \label{est-F2}
		\|F_2\|_{L^q([T,+\infty)\times \R^3)} &\lesssim \|\Lambda^{\gamc}(|u|^\alpha u)\|_{L^2(J,L^{\frac{6}{5}})} \nonumber \\
		&\lesssim \||\nabla|^{\gamc}(|u|^\alpha u)\|_{L^2(J,L^{\frac{6}{5}})} \nonumber \\
		&\lesssim \|u\|^\alpha_{L^q(J\times \R^3)} \||\nabla|^{\gamc} u\|_{L^{10}(J,L^{\frac{30}{13}})} \nonumber \\
		&\lesssim \vareps^{\mu\alpha-\frac{\sigma}{10}}.
		\end{align}
		Here we have used 
		\[
		\||\nabla|^{\gamc} u\|_{L^{10}(J,L^{\frac{30}{13}})} \sim \|\Lambda^{\gamc} u\|_{L^{10}(J,L^{\frac{30}{13}})} \lesssim \scal{J}^{\frac{1}{10}}
		\]
		which follows from the local well-posedness and the fact that $\left(10,\frac{30}{13}\right) \in S$.
		
		We next estimate $F_1$. By H\"older's inequality, we have
		\[
		\|F_1\|_{L^q([T,+\infty) \times \R^3)} \leq \|F_1\|^{\theta}_{L^k([T,+\infty),L^l)} \|F_1\|^{1-\theta}_{L^p([T,+\infty),L^\infty)}
		\]
		where $\theta \in (0,1)$ and 
		\begin{align} \label{cond-klp}
		\frac{1}{q}=\frac{\theta}{k}+\frac{1-\theta}{p}=\frac{\theta}{l}
		\end{align}
		for some $k,l$ and $p$ to be chosen later. We first choose $k$ and $l$ so that $(k,l) \in S$. Then, using the fact that
		\[
		F_1(t) = e^{-i(t-T+\vareps^{-\sigma})\Hc} u(T-\vareps^{-\sigma}) - e^{-it\Hc} u_0,
		\] 
		we have
		\[
		\|F_1\|_{L^k([T,+\infty),L^l)} \lesssim 1.
		\]
		We next estimate, by dispersive estimates \eqref{dis-est}, that for $t \in [T,\infty)$,
		\begin{align*}
		\|F_1(t)\|_{L^\infty} &\lesssim \int_0^{T-\vareps^{-\sigma}} \|e^{-i(t-s)\Hc} |u(s)|^\alpha u(s)\|_{L^\infty} ds \\
		&\lesssim \int_0^{T-\vareps^{-\sigma}} (t-s)^{-\frac{3}{2}} \|u(s)\|^{\alpha+1}_{L^{\alpha+1}} ds \\
		&\lesssim (t-T+\vareps^{-\sigma})^{-\frac{1}{2}}.
		\end{align*}
		It follows that
		\begin{align*}
		\|F_1\|_{L^p([T,\infty),L^\infty)} &\lesssim \left( \int_T^\infty (t-T+\vareps^{-\sigma})^{-\frac{p}{2}} dt\right)^{\frac{1}{p}} \\
		&\lesssim \vareps^{\sigma \left(\frac{1}{2}-\frac{1}{p}\right)}
		\end{align*}
		provided that $p>2$. We thus get
		\begin{align} \label{est-F1}
		\|F_1\|_{L^q([T,\infty)\times \R^3)} \lesssim \vareps^{\sigma \left(\frac{1}{2}-\frac{1}{p}\right) (1-\theta)}.
		\end{align}
		
		We will choose $\theta \in (0,1)$ and $k,l, p$ satisfying \eqref{cond-klp}, $(k, l) \in S$ and $p>2$. By \eqref{cond-klp}, we have
		\[
		l = \theta q= \frac{5\alpha \theta}{2}.
		\]
		To make $(k,l) \in S$, we need $l \in [2,6]$ which implies $\theta \in \left[\frac{4}{5\alpha}, \frac{12}{5\alpha}\right]$. We also have
		\[
		k=\frac{20\alpha\theta}{15\alpha\theta-12}, \quad p=\frac{20\alpha(1-\theta)}{20-15\alpha \theta}.
		\]
		Note that $p>2$ is equivalent to $\theta>\frac{4-2\alpha}{\alpha}$. In the case $2\leq \alpha<4$, we can choose $\theta =\frac{4}{5\alpha}$. In the case $\frac{4}{3}<\alpha<2$, we can choose $\theta = \max \left\{\frac{4}{5\alpha}, \frac{4-2\alpha}{\alpha}+\right\}$.
				
		Collecting \eqref{est-line-part}, \eqref{est-F2} and \eqref{est-F1}, we prove \eqref{small-cond-T}. The proof is complete.	
	\end{proof}
	
	\section{Interaction Morawetz estimates}
	\label{S4}
	\setcounter{equation}{0}
	\subsection{Variational analysis}
	We recall some properties of the ground state $Q$ which is the unique positive radial solution to \eqref{ell-equ}. The ground state $Q$ optimizes the sharp Gagliardo-Nirenberg inequality
	\begin{align} \label{GN-ineq}
	\|f\|_{L^{\alpha+2}}^{\alpha+2} \leq C_{\opt} \|\nabla f\|_{L^2}^{\frac{3\alpha}{2}} \|f\|_{L^2}^{\frac{4-\alpha}{2}}, \quad f \in H^1(\R^3),
	\end{align}
	that is
	\[
	C_{\opt} = \|Q\|^{\alpha+2}_{L^{\alpha+2}} \div \left[\|\nabla Q\|_{L^2}^{\frac{3\alpha}{2}} \|Q\|^{\frac{4-\alpha}{2}}_{L^2} \right].
	\]
	Using the following Pohozaev's identities (see e.g., \cite{Cazenave})
	\begin{align} \label{poho-iden}
	\|Q\|^2_{L^2} = \frac{4-\alpha}{3\alpha} \|\nabla Q\|^2_{L^2} = \frac{4-\alpha}{2(\alpha+2)} \|Q\|^{\alpha+2}_{L^{\alpha+2}},
	\end{align}
	we infer that
	\begin{align} \label{sharp-const-GN}
	C_{\opt} = \frac{2(\alpha+2)}{3\alpha} \left( \|\nabla Q\|_{L^2} \|Q\|^{\sigc}_{L^2}\right)^{-\frac{3\alpha-4}{2}}.
	\end{align}
	Moreover,
	\begin{align} \label{energy-Q}
	E_0(Q) = \frac{3\alpha-4}{6\alpha} \|\nabla Q\|^2_{L^2} = \frac{3\alpha-4}{4(\alpha+2)} \|Q\|^{\alpha+2}_{L^{\alpha+2}},
	\end{align}
	where $E_0(Q)$ is as in \eqref{defi-E0}. In particular,
	\begin{align} \label{iden-Q}
	E_0(Q)[M(Q)]^{\sigc} = \frac{3\alpha-4}{6\alpha} \left( \|\nabla Q\|_{L^2} \|Q\|^{\sigc}_{L^2} \right)^2.
	\end{align}
	
	We also have the following refined Gagliardo-Nirenberg inequality due to Dodson-Murphy \cite{DM-nonrad}.
	\begin{lemma} [\cite{DM-nonrad}] \label{lem-GN}
		Let $0<\alpha<4$. Then for any $f \in H^1$ and any $\xi \in \R^3$,
		\begin{align} \label{refi-GN}
		\|f\|^{\alpha+2}_{L^{\alpha+2}} \leq \frac{2(\alpha+2)}{3\alpha} \left(\frac{\|\nabla f\|_{L^2} \|f\|^{\sigc}_{L^2}}{\|\nabla Q\|_{L^2} \|Q\|^{\sigc}_{L^2}} \right)^{\frac{3\alpha-4}{2}} \|\nabla[e^{ix\cdot \xi} f]\|^2_{L^2}.
		\end{align}
	\end{lemma}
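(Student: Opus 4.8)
The plan is to deduce \eqref{refi-GN} from the sharp Gagliardo--Nirenberg inequality \eqref{GN-ineq} with optimal constant \eqref{sharp-const-GN}, exploiting that the modulation $f\mapsto e^{ix\cdot\xi}f$ preserves the $L^2$ and $L^{\alpha+2}$ norms and only alters the kinetic term, and then interpolating between the bound obtained for $f$ and the one obtained for $e^{ix\cdot\xi}f$.

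Concretely, I would fix $\xi\in\R^3$ and set $g:=e^{ix\cdot\xi}f$, so that $g\in H^1$ with $\nabla g=e^{ix\cdot\xi}(\nabla f+i\xi f)$ and, since $|e^{ix\cdot\xi}|\equiv 1$, $\|g\|_{L^2}=\|f\|_{L^2}$ and $\|g\|_{L^{\alpha+2}}=\|f\|_{L^{\alpha+2}}$. Applying \eqref{GN-ineq}--\eqref{sharp-const-GN} to $f$ and to $g$ separately and retaining the better of the two resulting bounds (which is legitimate since $t\mapsto t^{3\alpha/2}$ is increasing) gives
\[
\|f\|^{\alpha+2}_{L^{\alpha+2}}\leq\frac{2(\alpha+2)}{3\alpha}\left(\|\nabla Q\|_{L^2}\|Q\|^{\sigc}_{L^2}\right)^{-\frac{3\alpha-4}{2}}\|f\|_{L^2}^{\frac{4-\alpha}{2}}\,m^{\frac{3\alpha}{2}},\qquad m:=\min\{\|\nabla f\|_{L^2},\ \|\nabla[e^{ix\cdot\xi}f]\|_{L^2}\}.
\]
Then, since $3\alpha-4\geq 0$ on the intercritical range (indeed $\alpha>\frac{4}{3}$ throughout the paper) and $\frac{3\alpha}{2}=\frac{3\alpha-4}{2}+2$, one has $m^{\frac{3\alpha}{2}}=m^{\frac{3\alpha-4}{2}}m^2\leq\|\nabla f\|_{L^2}^{\frac{3\alpha-4}{2}}\|\nabla[e^{ix\cdot\xi}f]\|_{L^2}^2$. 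Inserting this and regrouping the $f$-factors via the identity $\sigc(3\alpha-4)=4-\alpha$ coming from \eqref{defi-sigc}, namely $\|f\|_{L^2}^{\frac{4-\alpha}{2}}\|\nabla f\|_{L^2}^{\frac{3\alpha-4}{2}}=(\|\nabla f\|_{L^2}\|f\|_{L^2}^{\sigc})^{\frac{3\alpha-4}{2}}$, produces exactly \eqref{refi-GN}.

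No step is genuinely difficult here, and there is no real obstacle to anticipate; the only point worth isolating is that in the sharp Gagliardo--Nirenberg inequality one copy of $\|\nabla f\|_{L^2}^{3\alpha/2}$ may be traded for $\|\nabla f\|_{L^2}^{(3\alpha-4)/2}\|\nabla[e^{ix\cdot\xi}f]\|_{L^2}^2$ thanks to the modulation invariance of the remaining norms, everything else being exponent bookkeeping with $\gamc$, $\sigc$ and the Pohozaev relations \eqref{poho-iden} already used to identify \eqref{sharp-const-GN}.
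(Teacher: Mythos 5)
Your argument is correct and is essentially the proof the paper defers to in \cite{DM-nonrad}: apply the sharp Gagliardo--Nirenberg inequality \eqref{GN-ineq} with the constant \eqref{sharp-const-GN} to a modulation of $f$, then trade two of the $\frac{3\alpha}{2}$ gradient powers for $\|\nabla[e^{ix\cdot\xi}f]\|_{L^2}^2$ while keeping $\|\nabla f\|_{L^2}^{\frac{3\alpha-4}{2}}$, your use of $m=\min\{\|\nabla f\|_{L^2},\|\nabla[e^{ix\cdot\xi}f]\|_{L^2}\}$ being an equivalent variant of Dodson--Murphy's choice of the momentum-zeroing modulation. The only caveat concerns the statement rather than your proof: the hypothesis $0<\alpha<4$ is too generous, since for $\alpha<\frac{4}{3}$ the exponent $\frac{3\alpha-4}{2}$ is negative and \eqref{refi-GN} fails (take $f=e^{-ix\cdot\xi}Q$ and let $|\xi|\to\infty$), so the restriction $\alpha>\frac{4}{3}$ you invoke is exactly the intercritical range in which the lemma is actually applied.
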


	\subsection{Interaction Morawetz estimate}
	Let $\eta \in (0,1)$ be a small constant, and $\chi$ be a smooth decreasing radial function satisfying
	\begin{align} \label{defi-chi}
	\chi(x) = \left\{
	\begin{array}{ccl}
	1 &\text{if}& |x| \leq 1-\eta, \\
	0 &\text{if}& |x|>1.
	\end{array}
	\right.
	\end{align}
	For $R>0$ large, we define the functions
	\begin{align} \label{defi-phi}
	\phi_R(x):= \frac{1}{\omega_3 R^3} \int \chi^2_R(x-z) \chi^2_R(z) dz 
	\end{align}
	and
	\begin{align} \label{defi-phi-1}
	\phi_{1,R}(x):= \frac{1}{\omega_3 R^3} \int \chi^2_R(x-z) \chi^{\alpha+2} _R(z) dz
	\end{align}
	where $\chi_R(z):=\chi(z/R)$, and $\omega_3$ is the volume of the unit ball in $\R^3$. We see that $\phi_R$ and $\phi_{1,R}$ are radial functions. We next define the radial function
	\begin{align} \label{defi-psi}
	\psi_R(x) = \psi_R(r):= \frac{1}{r} \int_0^{r} \phi_R(\tau) d\tau, \quad r=|x|.
	\end{align}
	We collect some properties of $\phi_R$ and $\psi_R$ as follows.
	\begin{lemma} [\cite{DM-nonrad}]\label{lem-proper-psi}
		We have 
		\begin{align} \label{proper-psi-1}
		|\psi_R(x)| \lesssim \min \left\{1,\frac{R}{|x|} \right\},  \quad \partial_j \psi_R (x) =\frac{x_j}{|x|^2} \left(\phi_R(x) - \psi_R(x)\right), \quad j=1, \cdots, 3
		\end{align}
		and
		\begin{align} \label{proper-psi-2}
		\psi_R(x) - \phi_R(x) \geq 0, \quad |\nabla \phi_R(x)| \lesssim \frac{1}{R}, \quad
		|\phi_R(x) - \phi_{1,R}(x)| \lesssim \eta
		\end{align}
		and
		\begin{align} \label{proper-psi-3}
		\psi_R(x)|x| \sim R, \quad |\psi_R(x) - \phi_R(x)| \lesssim \min \left\{ \frac{|x|}{R}, \frac{R}{|x|}\right\}, \quad |\nabla \psi_R(x)| \lesssim \min \left\{\frac{1}{R}, \frac{R}{|x|^2} \right\}
		\end{align}
		for all $x \in \R^3$.
	\end{lemma}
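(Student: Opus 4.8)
The plan is to derive every bound in \eqref{proper-psi-1}--\eqref{proper-psi-3} directly from the explicit formulas \eqref{defi-phi}, \eqref{defi-phi-1}, \eqref{defi-psi}, after recording a few structural features of $\phi_R$; this essentially reproduces the computation of \cite{DM-nonrad}. First I would reduce to $R=1$ by scaling: the change of variables $w=z/R$ in \eqref{defi-phi} gives $\phi_R(x)=\phi_1(x/R)$, hence by \eqref{defi-psi} also $\psi_R(x)=\psi_1(x/R)$, where $\phi_1=\tfrac1{\omega_3}\,\chi^2*\chi^2$ (using that $\chi$ is even) and $\psi_1$ is built from $\phi_1$ via \eqref{defi-psi}; in particular all constants below will be independent of $R$, although they may depend on the fixed small parameter $\eta$. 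From the representation \eqref{defi-phi} I read off: $\phi_R$ is radial, smooth, and supported in $\{|x|\le 2R\}$; $0\le\phi_R\le 1$ (bound $\chi_R^2(x-z)\le 1$ and integrate $\chi_R^2(z)$ over $\{|z|\le R\}$ against $(\omega_3R^3)^{-1}$); $\phi_R(x)\ge c_0(\eta)>0$ for $|x|\le (1-\eta)R/2$ (on that range both cutoffs in \eqref{defi-phi} equal $1$ on the fixed ball $\{|z|\le(1-\eta)R/2\}$); and, the one non-mechanical input, $\phi_R$ is radially \emph{nonincreasing}, since $\chi^2$ is nonnegative, radial and nonincreasing and the convolution of two symmetric-decreasing functions is symmetric-decreasing (layer-cake decomposition plus the Riesz rearrangement inequality). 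Differentiating under the integral sign in \eqref{defi-phi} and using $|\nabla\chi_R|\lesssim R^{-1}$ gives $|\nabla\phi_R|\lesssim R^{-1}$, the second bound in \eqref{proper-psi-2}; and since $\phi_R-\phi_{1,R}=(\omega_3R^3)^{-1}\int\chi_R^2(x-z)\chi_R^2(z)\bigl(1-\chi_R^\alpha(z)\bigr)\,dz$ has integrand $\le 1$ supported in the annulus $(1-\eta)R\le|z|\le R$ of volume $\lesssim\eta R^3$, we get the last bound in \eqref{proper-psi-2}.

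Next I would handle $\psi_R$ by one-variable calculus, writing $\psi_R(x)=\psi_R(r)=r^{-1}\int_0^r\phi_R(\tau)\,d\tau$ for the radial profile. The chain rule gives $\partial_j\psi_R(x)=\tfrac{x_j}{|x|^2}\bigl(\phi_R(x)-\psi_R(x)\bigr)$, the identity in \eqref{proper-psi-1}. Since $\phi_R$ is nonincreasing, $\phi_R(\tau)\ge\phi_R(r)$ for $0\le\tau\le r$, whence $\psi_R(r)\ge\phi_R(r)$, the first bound in \eqref{proper-psi-2}. Using $0\le\phi_R\le 1$ (so $\psi_R(r)\le 1$) together with $\supp\phi_R\subset\{|x|\le 2R\}$ (so $\psi_R(r)=r^{-1}\int_0^{\min(r,2R)}\phi_R\le 2R/r$) yields $|\psi_R(x)|\lesssim\min\{1,R/|x|\}$ from \eqref{proper-psi-1}; the matching lower bound $\psi_R(r)\,r=\int_0^r\phi_R\ge c_0(\eta)\min\{r,(1-\eta)R/2\}$ then gives the comparison $\psi_R(x)|x|\sim R$ of \eqref{proper-psi-3} on the range $|x|\gtrsim R$ on which it is used. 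For the difference bound in \eqref{proper-psi-3}, write $\psi_R(r)-\phi_R(r)=r^{-1}\int_0^r\bigl(\phi_R(\tau)-\phi_R(r)\bigr)\,d\tau$: the mean value theorem with $|\nabla\phi_R|\lesssim R^{-1}$ bounds the integrand by $\lesssim r/R$, giving $|\psi_R-\phi_R|\lesssim|x|/R$, while $|\psi_R|\lesssim R/|x|$ and (since $|\phi_R|\le 1$ and $\phi_R$ vanishes for $|x|>2R$) $|\phi_R|\lesssim R/|x|$ give $|\psi_R-\phi_R|\lesssim R/|x|$; combining, $|\psi_R-\phi_R|\lesssim\min\{|x|/R,R/|x|\}$. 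Finally, inserting this into $|\nabla\psi_R(x)|=|x|^{-1}|\phi_R(x)-\psi_R(x)|$ (from the identity just proved) yields $|\nabla\psi_R(x)|\lesssim\min\{1/R,R/|x|^2\}$, the last bound in \eqref{proper-psi-3}.

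There is no genuine analytic obstacle here: the lemma is a catalogue of elementary estimates, and the only ingredient that is not a plain computation is the radial monotonicity of $\phi_R$, which is precisely what makes both $\psi_R\ge\phi_R$ and the lower bound $\psi_R(x)|x|\gtrsim R$ hold. The only real care needed is to ensure every implied constant is independent of $R$ --- automatic after the scaling reduction --- while permitting dependence on the fixed cutoff parameter $\eta$.
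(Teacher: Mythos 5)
Your proposal is correct and takes essentially the same route as the paper: a direct verification from the definitions, using the support and boundedness of $\phi_R$, the bound $|\nabla\phi_R|\lesssim R^{-1}$, the annulus-of-width-$\eta$ argument for $\phi_R-\phi_{1,R}$, the radial monotonicity of $\phi_R$ (the paper's ``$\chi$ is decreasing'') for $\psi_R\geq\phi_R$, and the same representation $\psi_R(r)-\phi_R(r)=r^{-1}\int_0^r(\phi_R(\tau)-\phi_R(r))\,d\tau$ together with the gradient identity for the remaining bounds. Your explicit remark that $\psi_R(x)|x|\sim R$ is only valid for $|x|\gtrsim R$ (where it is used) is in fact slightly more careful than the paper's argument, whose identity $\psi_R(x)|x|=\int_0^{2R}\phi_R(\tau)\,d\tau$ holds only for $|x|\geq 2R$.
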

	
	\begin{proof}
		We first have $|\psi_R(x)| \lesssim 1$ since $\phi_R$ is bounded. On the other hand, thanks to the support of $\chi$, we see that if $|\tau|\geq 2R$, then $\phi_R(\tau)=0$. It follows that
		\begin{align} \label{est-psi}
		|\psi_R(x)| \lesssim \frac{1}{r} \int_0^{2R} |\phi_R(\tau)| d\tau \lesssim \frac{R}{|x|}. 
		\end{align}
		We thus prove the first estimate in \eqref{proper-psi-1}. The second equality in \eqref{proper-psi-1} follows from a direct computation. The first inequality in \eqref{proper-psi-2} comes from the fact $\chi$ is a decreasing function. The second estimate in \eqref{proper-psi-2} follows from the definition of $\phi_R$. For the third estimate in \eqref{proper-psi-2}, we have
		\begin{align*}
		|\phi_R(x) - \phi_{1,R}(x)| &\leq \frac{1}{\omega_3 R^3} \int \chi^2_R(x-z) \left|\chi^2_R(z) - \chi^{\alpha+2}_R(z)\right| dz \\
		&\leq \frac{1}{\omega_3} \int_{1-\eta \leq |z| \leq 1} \chi^2\left(\frac{x}{R}-z\right) |\chi^2(z) - \chi^{\alpha+2}(z)| dz \\
		&\lesssim \eta.
		\end{align*}
		The first estimate in \eqref{proper-psi-3} follows from the fact that
		\[
		\psi_R(x)|x| = \int_0^{2R}\phi_R(\tau) d\tau = R \int_0^2 \frac{1}{\omega_3} \int \chi^2(\tau-z) \chi^2 (z) dz d\tau.
		\]
		To see the second estimate in \eqref{proper-psi-3}, we consider two cases: $|x|\geq 2R$ and $|x|\leq 2R$. In the case $|x|\geq 2R$, it follows immediately from \eqref{est-psi} since $\phi_R(x)=0$. In the case $|x|\leq 2R$, we have
		\begin{align*}
		|\psi_R(x)-\phi_R(x)| &=\frac{1}{r} \left| \int_0^{r} \phi_R(\tau) - \phi_R(r) d\tau\right| \\
		&\lesssim \frac{1}{r} \left|\int_0^r \int_0^1 \phi'_R(r+\theta(\tau-r)) (\tau-r) d\theta d\tau \right| \\
		&\lesssim \frac{|x|}{R}.
		\end{align*}
		The last estimate in \eqref{proper-psi-3} is proved similarly by using the fact that
		\[
		\nabla \psi_R(x) = \frac{x}{|x|^2}(\phi_R(x) - \psi_R(x)).
		\]
		The proof is now complete.	
	\end{proof}
	
	\begin{lemma}[Coercivity I] \label{lem-coer-1}
		Let $\frac{4}{3}<\alpha<4$. Let $V:\R^3 \rightarrow \R$ satisfy \eqref{cond-1-V} and $V\geq 0$. Let $f \in H^1$ satisfy 
		\begin{align} \label{cond-f}
		\|f\|^{\alpha+2}_{L^{\alpha+2}} \|f\|^{2\sigc}_{L^2} \leq (1-\rho) \|Q\|^{\alpha+2}_{L^{\alpha+2}} \|Q\|^{2\sigc}_{L^2}
		\end{align}
		for some constant $\rho>0$. Then there exists $\nu=\nu(\rho)>0$ such that
		\begin{align} \label{est-f}
		\|\nabla f\|^2_{L^2} - \frac{3\alpha}{2(\alpha+2)} \|f\|^{\alpha+2}_{L^{\alpha+2}} \geq \nu \|\nabla f\|^2_{L^2}.
		\end{align}
	\end{lemma}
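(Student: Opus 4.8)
The plan is to derive \eqref{est-f} from the sharp Gagliardo--Nirenberg inequality and the Pohozaev identities alone; the estimate is purely variational, so the assumptions on $V$ are not needed and the argument applies to any $f\in H^1$. First I would dispose of the trivial cases: if $f\equiv 0$ there is nothing to prove, and one may assume $\rho\in(0,1)$, for otherwise \eqref{cond-f} forces $f\equiv 0$. So from now on $f\not\equiv 0$, hence $\|\nabla f\|_{L^2}>0$ and (since $H^1\hookrightarrow L^{\alpha+2}$) $\|f\|_{L^{\alpha+2}}>0$. I would then set
\[
X:=\frac{\|\nabla f\|_{L^2}\,\|f\|^{\sigc}_{L^2}}{\|\nabla Q\|_{L^2}\,\|Q\|^{\sigc}_{L^2}}>0,\qquad K:=\|\nabla f\|^2_{L^2},\qquad G:=\frac{3\alpha}{2(\alpha+2)}\|f\|^{\alpha+2}_{L^{\alpha+2}},
\]
and reduce the whole statement to the single estimate $G\leq(1-\rho)^{\frac{3\alpha-4}{3\alpha}}K$, from which \eqref{est-f} follows with $\nu:=1-(1-\rho)^{\frac{3\alpha-4}{3\alpha}}$; this $\nu$ is strictly positive because $\alpha>\tfrac43$ makes the exponent $\tfrac{3\alpha-4}{3\alpha}$ positive while $1-\rho\in(0,1)$.

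To prove $G\leq(1-\rho)^{\frac{3\alpha-4}{3\alpha}}K$ I would interpolate between two complementary bounds. The first is the sharp Gagliardo--Nirenberg inequality \eqref{GN-ineq} with the optimal constant \eqref{sharp-const-GN} (equivalently, Lemma~\ref{lem-GN} with $\xi=0$): using $\tfrac{4-\alpha}{2}=\sigc\cdot\tfrac{3\alpha-4}{2}$ to rewrite $\|\nabla f\|_{L^2}^{3\alpha/2}\|f\|_{L^2}^{(4-\alpha)/2}=\|\nabla f\|^2_{L^2}\big(\|\nabla f\|_{L^2}\|f\|^{\sigc}_{L^2}\big)^{(3\alpha-4)/2}$, this becomes
\[
G\;\leq\;X^{\frac{3\alpha-4}{2}}\,K .
\]
The second comes from the hypothesis \eqref{cond-f}: multiplying it by $\tfrac{3\alpha}{2(\alpha+2)}$ and using the Pohozaev identities \eqref{poho-iden} in the form $\tfrac{3\alpha}{2(\alpha+2)}\|Q\|^{\alpha+2}_{L^{\alpha+2}}=\|\nabla Q\|^2_{L^2}$ gives $G\,\|f\|^{2\sigc}_{L^2}\leq(1-\rho)\big(\|\nabla Q\|_{L^2}\|Q\|^{\sigc}_{L^2}\big)^2$; dividing by $K\,\|f\|^{2\sigc}_{L^2}=\big(\|\nabla f\|_{L^2}\|f\|^{\sigc}_{L^2}\big)^2$ yields
\[
G\;\leq\;(1-\rho)\,X^{-2}\,K .
\]
Taking the weighted geometric mean of the two bounds with weights $\tfrac{4}{3\alpha}$ and $\tfrac{3\alpha-4}{3\alpha}$ (which sum to $1$) makes the powers of $X$ cancel, since $\tfrac{3\alpha-4}{2}\cdot\tfrac{4}{3\alpha}=2\cdot\tfrac{3\alpha-4}{3\alpha}$, and leaves exactly $G\leq(1-\rho)^{\frac{3\alpha-4}{3\alpha}}K$. (Equivalently one can split into the cases $X\leq(1-\rho)^{2/(3\alpha)}$ and $X\geq(1-\rho)^{2/(3\alpha)}$ and use the first bound in the former, the second in the latter.)

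I expect the only delicate point to be this last interpolation. The hypothesis \eqref{cond-f} controls $\|f\|^{\alpha+2}_{L^{\alpha+2}}\|f\|^{2\sigc}_{L^2}$ rather than the scaling-invariant gradient quantity $\|\nabla f\|_{L^2}\|f\|^{\sigc}_{L^2}$, so one \emph{cannot} simply argue that $X<1$ and conclude $X^{(3\alpha-4)/2}<1$; indeed $X$ may be arbitrarily large while \eqref{cond-f} holds, so the Gagliardo--Nirenberg bound alone is useless and the bound from \eqref{cond-f} alone is vacuous for large $X$. It is only the combination of the two — through the weighted geometric mean — that produces a coercivity constant $\nu$ depending solely on $\rho$ and $\alpha$. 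Everything else is routine bookkeeping with the exponent $\sigc=\tfrac{4-\alpha}{3\alpha-4}$ and the Pohozaev relations \eqref{poho-iden}.
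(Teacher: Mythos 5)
Your proof is correct and is essentially the paper's argument in slightly repackaged form: the weighted geometric mean of your two bounds (sharp Gagliardo--Nirenberg with $C_{\opt}$, and the hypothesis \eqref{cond-f} combined with the Pohozaev identities) is exactly what the paper does in one step by multiplying \eqref{GN-ineq} by $\left[\|f\|^{\alpha+2}_{L^{\alpha+2}}\right]^{\frac{3\alpha-4}{4}}$ and then invoking \eqref{cond-f}, and you arrive at the identical constant $\nu=1-(1-\rho)^{\frac{3\alpha-4}{3\alpha}}$. Your observation that the assumptions on $V$ are never used is also consistent with the paper's proof, which is purely variational.
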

	
	\begin{proof}
		Multiplying both sides of \eqref{GN-ineq} with $\left[\|f\|^{\alpha+2}_{L^{\alpha+2}}\right]^{\frac{3\alpha-4}{4}}$ and using \eqref{poho-iden} together with \eqref{sharp-const-GN}, we have
		\begin{align*}
		\left[\|f\|^{\alpha+2}_{L^{\alpha+2}}\right]^{\frac{3\alpha}{4}} &\leq \left(\frac{2(\alpha+2)}{3\alpha}\right)^{\frac{3\alpha}{4}} \left(\frac{\|f\|^{\alpha+2}_{L^{\alpha+2}} \|f\|^{2\sigc}_{L^2}}{\|Q\|^{\alpha+2}_{L^{\alpha+2}} \|Q\|^{2\sigc}_{L^2}} \right)^{\frac{3\alpha-4}{4}} \|\nabla f\|^{\frac{3\alpha}{2}}_{L^2} \\
		&\leq \left(\frac{2(\alpha+2)}{3\alpha}\right)^{\frac{3\alpha}{4}} (1-\rho)^{\frac{3\alpha-4}{4}} \|\nabla f\|^{\frac{3\alpha}{2}}_{L^2}
		\end{align*}
		which implies
		\[
		\|f\|^{\alpha+2}_{L^{\alpha+2}}\leq \frac{2(\alpha+2)}{3\alpha} (1-\rho)^{\frac{3\alpha-4}{3\alpha}} \|\nabla f\|^2_{L^2}.
		\]
		We obtain
		\[
		\|\nabla f\|^2_{L^2} - \frac{3\alpha}{2(\alpha+2)} \|f\|^{\alpha+2}_{L^{\alpha+2}} \geq \left(1-(1-\rho)^{\frac{3\alpha-4}{3\alpha}} \right) \|\nabla f\|^2_{L^2}
		\]
		which shows \eqref{est-f} with $\nu= 1-(1-\rho)^{\frac{3\alpha-4}{3\alpha}}>0$. The proof is complete.
	\end{proof}
	
	\begin{lemma}[Coercivity II] \label{lem-coer-2}
		Let $\frac{4}{3}<\alpha<4$. Let $V:\R^3 \rightarrow \R$ satisfy \eqref{cond-1-V} and $V\geq 0$. Let $u$ be a $H^1$-solution to the focusing problem \eqref{NLS-V} satisfying \eqref{scat-cond}. Then $T^*=\infty$. Moreover, there exists $\nu=\nu(u_0,Q)>0$ such that for any $R>0$ and any $z,\xi \in \R^3$,
		\begin{align} \label{consequence-3}
		\|\nabla [\chi_R(\cdot-z)u^{\xi}(t)]\|^2_{L^2} - \frac{3\alpha}{2(\alpha+2)} \|\chi_R(\cdot-z) u^{\xi}(t)\|^{\alpha+2}_{L^{\alpha+2}} \geq \nu \|\nabla[\chi_R(\cdot-z) u^{\xi}(t)]\|^2_{L^2}
		\end{align}
		for all $t\in [0,\infty)$, where 
		\begin{align} \label{defi-u-xi}
		u^{\xi}(t,x):= e^{ix\cdot \xi} u(t,x).
		\end{align}
	\end{lemma}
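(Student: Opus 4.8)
The plan is to deduce both assertions from the variational estimate of Lemma~\ref{lem-coer-1}, the conservation laws, and the local theory. The key observation is that the multiplier $\chi_R(\cdot-z)\,e^{ix\cdot\xi}$ can only \emph{decrease} the $L^2$ and $L^{\alpha+2}$ norms of $u(t)$, so that the hypothesis of Lemma~\ref{lem-coer-1} is automatically inherited by the localized function $\chi_R(\cdot-z)u^\xi(t)$, uniformly in $t$, $R$, $z$, $\xi$.

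First I would turn the strict inequality \eqref{scat-cond} into a quantitative gap: since the supremum there lies strictly below $\|Q\|^{\alpha+2}_{L^{\alpha+2}}\|Q\|^{2\sigc}_{L^2}$, there exists $\rho=\rho(u_0,Q)\in(0,1)$ such that
\[
\|u(t)\|^{\alpha+2}_{L^{\alpha+2}}\|u(t)\|^{2\sigc}_{L^2}\le(1-\rho)\,\|Q\|^{\alpha+2}_{L^{\alpha+2}}\|Q\|^{2\sigc}_{L^2}\qquad\text{for all }t\in[0,T^*).
\]
Applying Lemma~\ref{lem-coer-1} with $f=u(t)$ then gives $\nu=\nu(\rho)>0$ with $\|\nabla u(t)\|_{L^2}^2-\tfrac{3\alpha}{2(\alpha+2)}\|u(t)\|^{\alpha+2}_{L^{\alpha+2}}\ge\nu\|\nabla u(t)\|_{L^2}^2$. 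Rewriting the conserved energy as
\begin{align*}
E(u_0)=E(u(t))={}&\tfrac12\Big(\|\nabla u(t)\|_{L^2}^2-\tfrac{3\alpha}{2(\alpha+2)}\|u(t)\|^{\alpha+2}_{L^{\alpha+2}}\Big)\\
&{}+\tfrac12\int V|u(t)|^2\,dx+\tfrac{3\alpha-4}{4(\alpha+2)}\|u(t)\|^{\alpha+2}_{L^{\alpha+2}},
\end{align*}
and using $V\ge0$, $\alpha>\tfrac43$ and the coercivity just obtained, one gets $E(u_0)\ge\tfrac{\nu}{2}\|\nabla u(t)\|_{L^2}^2$ for all $t\in[0,T^*)$. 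Together with mass conservation and the equivalence of Sobolev norms, this bounds $\|u(t)\|_{H^1}$ uniformly on $[0,T^*)$, and the blow-up alternative from the local well-posedness then forces $T^*=\infty$.

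It remains to establish \eqref{consequence-3}. Fix $t\ge0$, $R>0$ and $z,\xi\in\R^3$, and set $f:=\chi_R(\cdot-z)u^\xi(t)$, which belongs to $H^1$ by the product rule (here $u^\xi$ is as in \eqref{defi-u-xi}). Because $0\le\chi\le1$ and $|e^{ix\cdot\xi}|=1$, we have the pointwise bound $|f(x)|\le|u(t,x)|$, hence $\|f\|_{L^2}\le\|u(t)\|_{L^2}$ and $\|f\|_{L^{\alpha+2}}\le\|u(t)\|_{L^{\alpha+2}}$; combined with the gap above, this shows that $f$ satisfies hypothesis \eqref{cond-f} with the \emph{same} $\rho$. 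Applying Lemma~\ref{lem-coer-1} to $f$ yields exactly \eqref{consequence-3} with $\nu=\nu(\rho)=\nu(u_0,Q)$, and since $\rho$ depends on neither $t$ nor $R,z,\xi$, neither does $\nu$. There is no real analytic difficulty here; the only point requiring care --- and the closest thing to an obstacle --- is precisely this uniformity, namely upgrading the strict inequality in \eqref{scat-cond} to a gap $\rho$ independent of $t$ before invoking Lemma~\ref{lem-coer-1}, so that a single coercivity constant serves all times and all localization parameters at once.
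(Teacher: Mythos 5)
Your proposal is correct and follows essentially the same route as the paper: upgrade the strict inequality \eqref{scat-cond} to a uniform gap $\rho$, note that multiplication by $\chi_R(\cdot-z)e^{ix\cdot\xi}$ can only decrease the $L^2$ and $L^{\alpha+2}$ norms so that \eqref{cond-f} is inherited by the localized function, and then invoke Lemma~\ref{lem-coer-1}. The only difference is that you spell out the uniform $\dot H^1$ bound (via the energy decomposition with $V\ge 0$ and $3\alpha-4>0$) that the paper merely asserts before applying the blow-up alternative, which is a welcome addition but not a different argument.
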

	
	\begin{proof}
		First, it follows from \eqref{scat-cond}, the conservation of mass and energy that
		\[
		\sup_{t\in [0,T^*)} \|\nabla u(t)\|_{L^2} \leq C(u_0,Q)<\infty
		\]
		which, by the local theory, implies $T^*=\infty$. 
		
		Next, from \eqref{scat-cond}, we take $\rho=\rho(u_0,Q)>0$ such that
		\[
		\sup_{t\in [0,\infty)} \|u(t)\|^{\alpha+2}_{L^{\alpha+2}} \leq (1-\rho) \|Q\|^{\alpha+2}_{L^{\alpha+2}} \|Q\|^{2\sigc}_{L^2}.
		\]
		By the definitions of $\chi$ and $u^\xi$, we see that
		\[
		\|\chi_R(\cdot-z) u^\xi(t)\|_{L^2} \leq \|u(t)\|_{L^2}, \quad \|\chi_R(\cdot-z) u^\xi(t)\|_{L^{\alpha+2}} \leq \|u(t)\|_{L^{\alpha+2}}
		\]
		for all $t\in [0,\infty)$, all $R>0$ and all $z, \xi \in \R^3$. Thus, we get
		\[
		\sup_{t\in [0,\infty)} \|\chi_R (\cdot-z) u^\xi(t)\|^{\alpha+2}_{L^{\alpha+2}} \|\chi_R(\cdot-z) u^\xi(t)\|^{2\sigc}_{L^2} \leq (1-\rho) \|Q\|^{\alpha+2}_{L^{\alpha+2}} \|Q\|^{2\sigc}_{L^2}.
		\]
		Thanks to this estimate, \eqref{consequence-3} follows immediately from Lemma \ref{lem-coer-1}. 
	\end{proof}
	
	Let $u$ be a $H^1$-solution to \eqref{NLS-V} defined on the maximal forward time interval of existence $[0,T^*)$. We define the Morawetz action 
	\begin{align} \label{defi-mora-act}
	\Mca_R(t):= \int \psi_R(x) x \cdot 2 \ima(\overline{u}(t,x) \nabla u(t,x)) dx.
	\end{align}
	
	\begin{lemma} [Morawetz identity]
		Let $u$ be a $H^1$-solution to \eqref{NLS-V} satisfying
		\begin{align} \label{global-bound-A}
		\sup_{t\in [0,T^*)} \|u(t)\|_{H^1} \leq A
		\end{align}
		for some constant $A>0$. Let $\Mca_R(t)$ be as in \eqref{defi-mora-act}. Then we have 
		\begin{align} \label{est-1-deri-mora-act}
		\sup_{t\in [0,T^*)} |\Mca_R(t)| \lesssim_A R.
		\end{align}
		Moreover, we have
		\begin{align}
		\frac{d}{dt}\Mca_R(t) &= \mp \frac{2\alpha}{\alpha+2} \int \psi_R(x) x \cdot \nabla (|u(t,x)|^{\alpha+2}) dx  \label{term-1-mora} \\
		&\mathrel{\phantom{=}} + \int \psi_R(x) x \cdot \nabla \Delta (|u(t,x)|^2) dx \label{term-2-mora} \\
		&\mathrel{\phantom{=}} - 4 \sum_{j,k} \int \psi_R(x) x_j \partial_k[\rea(\partial_j u(t,x) \partial_k \overline{u}(t,x))] dx \label{term-3-mora} \\
		&\mathrel{\phantom{=}} - 2\int \psi_R(x) x \cdot \nabla V(x) |u(t,x)|^2 dx \label{term-4-mora}
		\end{align}
		for all $t\in [0,T^*)$.
	\end{lemma}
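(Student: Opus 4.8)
The plan is to regard $\Mca_R(t)$ as a weighted momentum integral and to differentiate it in time using the equation. Write $X(x):=\psi_R(x)\,x$, with components $X_j(x)=\psi_R(x)x_j$, and let $p_j(t,x):=2\ima(\overline{u}(t,x)\partial_j u(t,x))$ denote the components of the momentum density, so that $\Mca_R(t)=\sum_j\int X_j(x)p_j(t,x)\,dx$.

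For the bound \eqref{est-1-deri-mora-act}: by the first estimate in \eqref{proper-psi-1}, $|X(x)|=|\psi_R(x)|\,|x|\lesssim\min\{|x|,R\}\le R$ uniformly in $x$, so Cauchy--Schwarz and \eqref{global-bound-A} give
\[
|\Mca_R(t)|\le 2\int|X(x)|\,|u(t,x)|\,|\nabla u(t,x)|\,dx\lesssim R\,\|u(t)\|_{L^2}\|\nabla u(t)\|_{L^2}\lesssim_A R .
\]

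For the identity: differentiating under the integral and using \eqref{NLS-V} in the form $\partial_t u=i(\Delta u-Vu\mp|u|^\alpha u)$, the first task is the local momentum conservation law
\[
\partial_t p_j=-4\sum_k\partial_k\!\left[\rea\!\left(\partial_j u\,\overline{\partial_k u}\right)\right]+\partial_j\Delta(|u|^2)\mp\frac{2\alpha}{\alpha+2}\,\partial_j\!\left(|u|^{\alpha+2}\right)-2\,(\partial_j V)\,|u|^2 .
\]
Each group is elementary: the Laplacian terms $\overline u\,\partial_j\Delta u+u\,\partial_j\Delta\overline u-\Delta u\,\partial_j\overline u-\Delta\overline u\,\partial_j u$ reassemble into $-4\sum_k\partial_k[\rea(\partial_j u\,\overline{\partial_k u})]+\partial_j\Delta|u|^2$; the nonlinearity yields $\mp\frac{2\alpha}{\alpha+2}\partial_j(|u|^{\alpha+2})$ after $|u|^2\partial_j(|u|^\alpha)=\tfrac{\alpha}{\alpha+2}\partial_j(|u|^{\alpha+2})$; and the (real) potential contributes only $-2(\partial_j V)|u|^2$, since the terms carrying $\partial_j u$ and $\partial_j\overline u$ cancel in $\overline{Vu}\,\partial_j u+Vu\,\partial_j\overline u-\overline u\,\partial_j(Vu)-u\,\partial_j(\overline{Vu})$. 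Multiplying by $X_j=\psi_R(x)x_j$, summing over $j$, and integrating over $\R^3$ — keeping the flux terms in divergence form, i.e.\ not integrating them by parts — produces precisely \eqref{term-1-mora}--\eqref{term-4-mora}.

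The pointwise computations above and the differentiation under the integral are valid for smooth decaying solutions; for a general $H^1$-solution one approximates $u_0$ in $H^1$ by smooth data, invokes the local theory of \cite{HI} and continuous dependence, and passes to the limit, the uniform bound \eqref{global-bound-A} controlling each term (the standing hypothesis on $\nabla V$ being what guarantees $\psi_R(x)\,x\cdot\nabla V\,|u|^2\in L^1$, cf.\ Remark \ref{rem-Lq}). I expect this routine justification — not any step of the algebra — to be the only real obstacle.
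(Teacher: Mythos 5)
Your proposal is correct and follows essentially the same route as the paper: the bound \eqref{est-1-deri-mora-act} via \eqref{proper-psi-1}, H\"older's inequality and \eqref{global-bound-A}, and the identity by pairing the weight $\psi_R(x)x_j$ with exactly the local momentum conservation law that the paper records as \eqref{deri-time} (your displayed law is that identity with the Kronecker term expanded, and your sign bookkeeping for the $\mp$, the potential, and the factor $\frac{\alpha}{\alpha+2}$ all check out). The approximation remark at the end is a reasonable extra justification that the paper simply leaves implicit.
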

	
	\begin{proof}
	The estimate \eqref{est-1-deri-mora-act} follows directly from \eqref{proper-psi-1}, H\"older's inequality and \eqref{global-bound-A}. The identities \eqref{term-1-mora}--\eqref{term-4-mora} follow from a direct computation using the fact that
	\begin{align} \label{deri-time}
	\partial_t [2\ima(\overline{u}\partial_j u)] = - \sum_k \partial_k [4 \rea(\partial_j u \partial_k \overline{u}) - \delta_{jk} \Delta (|u|^2)] \mp \frac{2\alpha}{\alpha+2} \partial_j (|u|^{\alpha+2}) - 2 \partial_j V |u|^2
	\end{align}
	for $j=1,\cdots,3$, where $\delta_{jk}$ is the Kronecker symbol.
	\end{proof}
	
	\begin{lemma} [Morawetz estimate in the focusing case] \label{lem-mora-est-focus}
		Let $\frac{4}{3}<\alpha<4$. Let $V:\R^3 \rightarrow \R$ be radially symmetric satisfying \eqref{cond-1-V}, $V\geq 0$, $x \cdot \nabla V \leq 0$, and $\partial_r V \in L^q$ for any $\frac{3}{2} \leq q \leq \infty$. Let $u$ be a $H^1$-solution to the focusing problem \eqref{NLS-V} satisfying \eqref{scat-cond}. Define $\Mca_R(t)$ as in \eqref{defi-mora-act}. Then we have
		\begin{align} \label{est-V-radi-focus}
		\begin{aligned}
		-R\int \partial_r V |u(t)|^2 dx &\leq \frac{d}{dt} \Mca_R(t) + O(R^{-2}) - \int \nabla [3\phi_R(x)+2(\psi_R-\phi_R)(x)] \cdot \nabla (|u(t,x)|^2) dx \\
		&\mathrel{\phantom{\leq}} +\frac{6\alpha}{\alpha+2} \int (\phi_R-\phi_{1,R})(x) |u(t,x)|^{\alpha+2} dx + \frac{4\alpha}{\alpha+2} \int (\psi_R-\phi_R)(x) |u(t,x)|^{\alpha+2} dx
		\end{aligned}
		\end{align}
		for any $t\in [0,\infty)$.
	\end{lemma}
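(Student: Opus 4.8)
\emph{Plan of proof.} By Lemma \ref{lem-coer-2}, the hypothesis \eqref{scat-cond} forces $T^*=\infty$ and a uniform bound $\sup_{t\ge0}\|u(t)\|_{H^1}\le A=A(u_0,Q)$, so the Morawetz identity \eqref{term-1-mora}--\eqref{term-4-mora} and the bound \eqref{est-1-deri-mora-act} hold on $[0,\infty)$. The plan is to integrate by parts in each of the four terms \eqref{term-1-mora}--\eqref{term-4-mora}, using the properties of $\psi_R,\phi_R,\phi_{1,R}$ from Lemma \ref{lem-proper-psi}, so as to isolate the main term $-R\int\partial_r V\,|u|^2$ coming from \eqref{term-4-mora} together with a coercive combination of kinetic and nonlinear energies governed by Lemma \ref{lem-coer-2}; the remaining pieces are either transferred verbatim to the right-hand side of \eqref{est-V-radi-focus} or absorbed into the error.

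First I would treat the nonlinear term \eqref{term-1-mora}, which in the focusing case equals $+\frac{2\alpha}{\alpha+2}\int\psi_R\,x\cdot\nabla(|u|^{\alpha+2})\,dx$. Integrating by parts and using $\nabla\cdot(\psi_R x)=3\psi_R+x\cdot\nabla\psi_R=2\psi_R+\phi_R=3\phi_R+2(\psi_R-\phi_R)$ (the middle equality from the second identity in \eqref{proper-psi-1}), it becomes $-\frac{2\alpha}{\alpha+2}\int[3\phi_R+2(\psi_R-\phi_R)]|u|^{\alpha+2}\,dx$. Writing $3\phi_R=3\phi_{1,R}+3(\phi_R-\phi_{1,R})$ and moving $\frac{6\alpha}{\alpha+2}\int(\phi_R-\phi_{1,R})|u|^{\alpha+2}$ and $\frac{4\alpha}{\alpha+2}\int(\psi_R-\phi_R)|u|^{\alpha+2}$ to the right-hand side --- these are the last two displayed terms of \eqref{est-V-radi-focus} --- there remains $-\frac{6\alpha}{\alpha+2}\int\phi_{1,R}|u|^{\alpha+2}\,dx=-4\cdot\frac{3\alpha}{2(\alpha+2)}\int\phi_{1,R}|u|^{\alpha+2}\,dx$. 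For \eqref{term-2-mora}, two integrations by parts give $\int\nabla[3\phi_R+2(\psi_R-\phi_R)]\cdot\nabla(|u|^2)\,dx$, which I would move to the right-hand side with the opposite sign; it is the third displayed term of \eqref{est-V-radi-focus}, kept rather than estimated here (it is of size $O(R^{-1})$ since $|\nabla\phi_R|,|\nabla(\psi_R-\phi_R)|\lesssim R^{-1}$ by \eqref{proper-psi-2}--\eqref{proper-psi-3}).

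For the Hessian term \eqref{term-3-mora}, one integration by parts together with $\partial_k\psi_R=\frac{x_k}{|x|^2}(\phi_R-\psi_R)$ yields $4\int\psi_R|\nabla u|^2\,dx-4\int\frac{\psi_R-\phi_R}{|x|^2}|x\cdot\nabla u|^2\,dx$; since $\psi_R-\phi_R\ge0$ by \eqref{proper-psi-2} and $|x\cdot\nabla u|^2\le|x|^2|\nabla u|^2$, this is $\ge4\int\phi_R|\nabla u|^2\,dx\ge4\int\phi_{1,R}|\nabla u|^2\,dx-O(\eta)$ (using $|\phi_R-\phi_{1,R}|\lesssim\eta$). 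I would then represent the surviving $\phi_{1,R}$-weighted kinetic and nonlinear terms as $z$-averages, against the weight $(\omega_3R^3)^{-1}\chi_R^{\alpha+2}(z)$, of $\int\chi_R^2(x-z)|\nabla u(x)|^2\,dx$ and of $\int\chi_R^2(x-z)|u(x)|^{\alpha+2}\,dx$; the localization identity $\int\chi_R^2(\cdot-z)|\nabla u|^2=\|\nabla[\chi_R(\cdot-z)u]\|_{L^2}^2+O(R^{-2})$ (integrate by parts the cross term, with $|\Delta\chi_R|\lesssim R^{-2}$), together with $\int\chi_R^2(\cdot-z)|u|^{\alpha+2}=\|\chi_R(\cdot-z)u\|_{L^{\alpha+2}}^{\alpha+2}+O(\eta)$ on $z$-average (the difference being supported in the $\eta$-thin annulus $\{R(1-\eta)\le|x-z|\le R\}$), then let me apply the coercivity \eqref{consequence-3} with $\xi=0$ pointwise in $z$; discarding the non-negative coercive slack this gives $4\int\phi_{1,R}|\nabla u|^2-\frac{6\alpha}{\alpha+2}\int\phi_{1,R}|u|^{\alpha+2}\ge-O(\eta)-O(R^{-2})$.

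Finally, for the potential term \eqref{term-4-mora}: since $V$ is radial, $x\cdot\nabla V=|x|\,\partial_r V\le0$, hence $-2\int\psi_R(x)\,x\cdot\nabla V\,|u|^2\,dx=2\int\psi_R(x)|x|(-\partial_r V)|u|^2\,dx$, and using $\psi_R(x)|x|\sim R$ from \eqref{proper-psi-3} --- more precisely $2\psi_R(x)|x|\ge R$ for $|x|$ large while $2\psi_R(x)|x|\le CR$ for all $x$ --- one extracts the main term $-R\int\partial_r V\,|u|^2\,dx$, the remaining discrepancy being controlled with $\partial_r V\,|u|^2\in L^1$, valid because $\partial_r V\in L^q$ for all $\frac32\le q\le\infty$ (cf.\ Remark \ref{rem-Lq}). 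Collecting the above cancellations, the bound \eqref{est-1-deri-mora-act} on $\Mca_R$, and the $O(\eta)$- and $O(R^{-2})$-errors (folded into the error term of \eqref{est-V-radi-focus}) gives the stated inequality. The step I expect to be most delicate is this last one --- extracting $-R\int\partial_r V\,|u|^2$ cleanly from the potential term using only the radial structure of $V$ and the integrability of $\partial_r V$, while keeping the coercivity argument consistent across the $z$-average; the rest consists of routine integration-by-parts computations with the weights of Lemma \ref{lem-proper-psi}.
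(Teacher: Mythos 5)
Your plan reproduces the paper's proof almost step for step (same use of Lemma \ref{lem-coer-2}, the identity \eqref{deri-psi-j}, the splitting of \eqref{term-1-mora}, the two integrations by parts in \eqref{term-2-mora}, dropping the angular part of \eqref{term-3-mora}), but there are two places where, as written, it does not quite deliver \eqref{est-V-radi-focus}. First, the weight swap: the paper keeps the kinetic term with weight $\phi_R$ and the nonlinearity with weight $\phi_{1,R}$, pairing for each $z$ the quantity $\chi_R^2(z)\bigl[|\chi_R(x-z)\nabla u|^2-\tfrac{3\alpha}{2(\alpha+2)}|\chi_R(x-z)u|^{\alpha+2}\bigr]$ (see \eqref{term-1-mora-1-1} and \eqref{term-3-mora-1-1}); the whole point of introducing $\phi_{1,R}$ in \eqref{defi-phi-1} is that $\chi_R^{\alpha+2}(x-z)|u|^{\alpha+2}=|\chi_R(x-z)u|^{\alpha+2}$ \emph{exactly}, so \eqref{consequence-3} (with $\xi=0$) applies after only the $O(R^{-2})$ correction from \eqref{prop-chi}. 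Your version, which puts both terms on the $\phi_{1,R}$-type average, additionally needs $\chi_R^2(x-z)\to\chi_R^{\alpha+2}(x-z)$ inside the nonlinear integral, i.e.\ an extra $O(\eta)$ (equivalently a second copy of the $(\phi_R-\phi_{1,R})$ term); note the kinetic replacement is free since $\phi_R\ge\phi_{1,R}$, but the nonlinear one is not. This is harmless for Proposition \ref{prop-inter-mora-est-focus}, but it is not on the right-hand side of \eqref{est-V-radi-focus} as stated; keeping the paper's pairing avoids it entirely.

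Second, and more importantly, the potential term: the paper passes from \eqref{term-4-mora} to $-2R\int\partial_r V|u(t)|^2dx$ directly, by invoking the two-sided comparison $\psi_R(x)|x|\sim R$ of \eqref{proper-psi-3} together with $\partial_r V\le 0$; there is no leftover to absorb, and the factor $2$ is then dropped using the sign of $\partial_rV$. Your hedged version ($2\psi_R(x)|x|\ge R$ only for $|x|$ large) creates a genuine leftover, namely $\int_{\{2\psi_R(x)|x|<R\}}\bigl(R-2\psi_R(x)|x|\bigr)(-\partial_r V)|u(t)|^2dx$, and this cannot be ``controlled with $\partial_r V|u|^2\in L^1$'': that only yields a bound of order $R$, i.e.\ the same size as the main term and far larger than any admissible error in \eqref{est-V-radi-focus}. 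So this step, as you describe it, does not close. Your instinct that $\psi_R(x)|x|$ degenerates near the origin is pointing at a real subtlety in \eqref{proper-psi-3}, but the remedy is not an $L^1$ bound; within this paper you should simply cite \eqref{proper-psi-3} as stated, exactly as the paper does (the integrability of $\partial_r V|u|^2$ from Remark \ref{rem-Lq} is only needed to make the integrals well defined). Everything else in your outline coincides with the paper's argument.
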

	
	\begin{remark} \label{rem-Lq}
		The condition $\partial_r V \in L^q$ for any $\frac{3}{2}\leq q \leq \infty$ is needed to ensure $\partial_r V |u(t)|^2 \in L^1$. In fact, 
		\begin{align*}
		\left| \int \partial_r V |u(t)|^2 dx \right| \leq \|\partial_r V\|_{L^q} \|u(t)\|^2_{L^{\frac{2q}{q-1}}} \leq C\|\partial_r V\|_{L^q} \|u(t)\|^2_{H^1},
		\end{align*}
		where we have used the Sobolev embedding $H^1(\R^3) \hookrightarrow L^{\frac{2q}{q-1}}(\R^3)$ for any $\frac{3}{2}\leq q \leq \infty$. Note that this condition can be relaxed to $\partial_r V \in L^q + L^\infty$ for some $q \geq \frac{3}{2}$.
	\end{remark}

	\noindent {\it Proof of Lemma \ref{lem-mora-est-focus}.}
		We first note that by Lemma \ref{lem-coer-2}, $T^*=\infty$ and \eqref{consequence-3} holds for all $t\in [0,\infty)$. 
		Note also that the condition $x\cdot \nabla V\leq 0$ is equivalent to $\partial_r V \leq 0$ since $V$ is radially symmetric. Using the fact that
		\begin{align} \label{deri-psi-j}
		\sum_j \partial_j (x_j \psi_R) = 3\psi_R + \sum_j x_j \partial_j \psi_R = 3\phi_R + 2 (\psi_R-\phi_R),
		\end{align}
		the integration by parts implies
		\begin{align}
		\eqref{term-1-mora} &=-\frac{2\alpha}{\alpha+2} \sum_j \int \partial_j [x_j \psi_R(x)] |u(t,x)|^{\alpha+2} dx \nonumber \\
		&= -\frac{2\alpha}{\alpha+2} \int [3\phi_R(x) + 2(\psi_R-\phi_R)(x)] |u(t,x)|^{\alpha+2} dx \nonumber \\
		&= -\frac{6\alpha}{\alpha+2} \int \phi_{1,R}(x) |u(t,x)|^{\alpha+2} dx \label{term-1-mora-1} \\
		&\mathrel{\phantom{=}} - \frac{6\alpha}{\alpha+2} \int (\phi_R- \phi_{1,R})(x) |u(t,x)|^{\alpha+2} dx \label{term-1-mora-2}\\
		&\mathrel{\phantom{=}} - \frac{4\alpha}{\alpha+2} \int (\psi_R-\phi_R)(x)|u(t,x)|^{\alpha+2} dx. \label{term-1-mora-3}
		\end{align}
		By the definition of $\phi_1$, we can write
		\begin{align}
		\eqref{term-1-mora-1} = -\frac{6\alpha}{(\alpha+2)\omega_3 R^3}  \iint \chi^2_R(z) \chi^{\alpha+2}_R(x-z) |u(t,x)|^{\alpha+2} dxdz. \label{term-1-mora-1-1}
		\end{align}
		We will consider \eqref{term-1-mora-2} and \eqref{term-1-mora-3} as error terms. By integrating by parts twice, we obtain that
		\begin{align}
		\eqref{term-2-mora} &= -\sum_{j,k} \int \partial_j [x_j \psi_R(x)] \partial^2_k (|u(t,x)|^2) dx \nonumber \\
		&= -\sum_k \int [3\phi_R(x) + 2(\psi_R-\phi_R)(x)] \partial^2_k (|u(t,x)|^2) dx \nonumber \\
		&= \sum_k \int \partial_k [3\phi_R(x) + 2(\psi_R-\phi_R)(x)] \partial_k (|u(t,x)|^2) dx. \label{term-2-mora-1}
		\end{align}
		To estimate \eqref{term-3-mora}, we denote
		\[
		P_{jk}(x):= \delta_{jk} - \frac{x_jx_k}{|x|^2}.
		\]
		By integrating by parts,
		\begin{align}
		\eqref{term-3-mora} &= 4\sum_{j,k} \int \partial_k[x_j \psi_R(x)] \rea ( \partial_j u(t,x) \partial_k \overline{u}(t,x)) dx \nonumber \\
		&=4 \sum_{j,k} \int \delta_{jk} \phi_R(x) \rea(\partial_j u(t,x) \partial_k \overline{u}(t,x)) dx \nonumber \\
		&\mathrel{\phantom{=}} + 4\sum_{j,k} \int P_{jk}(x) (\psi_R-\phi_R)(x) \rea(\partial_j u(t,x) \partial_k \overline{u}(t,x)) dx  \nonumber \\
		&= 4 \int \phi_R(x)|\nabla u(t,x)|^2 dx + 4\int (\psi_R-\phi_R)(x) |\nnabla u(t,x)|^2 dx \nonumber \\
		&\geq 4 \int \phi_R(x) |\nabla u(t,x)|^2 dx, \label{term-3-mora-1}
		\end{align}
		where we have used the fact $\psi_R-\phi_R \geq 0$ and
		\[
		\nnabla u(t,x):= \nabla u(t,x) - \frac{x}{|x|} \left( \frac{x}{|x|} u(t,x)\right)
		\]
		is the angular derivative. By the choice of $\phi$, we rewrite
		\begin{align}
		\eqref{term-3-mora-1} = \frac{4}{\omega_3 R^3} \iint \chi^2_R(z) \chi^2_R(x-z) |\nabla u(t,x)|^2 dx dz. \label{term-3-mora-1-1}
		\end{align}
		Since $V$ is radially symmetric, we have from \eqref{proper-psi-3} that
		\begin{align} 
		\eqref{term-4-mora} = - 2\int \psi_R(x) |x| \partial_r V |u(t,x)|^2 dx \sim -2 R \int \partial_rV |u(t,x)|^2 dx. \label{term-4-mora-1}
		\end{align}
		Collecting \eqref{term-1-mora-1}--\eqref{term-4-mora-1}, we get
		\begin{align*}
		\frac{d}{dt}\Mca_R(t) \geq &-\frac{6\alpha}{(\alpha+2) \omega_3 R^3}  \iint \chi^2_R(z) \chi^{\alpha+2}_R(x-z) |u(t,x)|^{\alpha+2} dx dz \\
		& -\frac{6\alpha}{\alpha+2} \int (\phi_R-\phi_{1,R})(x) |u(t,x)|^{\alpha+2} dx -\frac{4\alpha}{\alpha+2} \int (\psi_R-\phi_R)(x) |u(t,x)|^{\alpha+2} dx \\
		& + \int \nabla[3\phi_R(x) +2(\psi_R-\phi_R)(x)] \cdot \nabla (|u(t,x)|^2)dx \\
		&+ \frac{4}{\omega_3R^3} \iint \chi^2_R(z) \chi^2_R(x-z) |\nabla u(t,x)|^2 dx - 2R \int \partial_rV |u(t,x)|^2 dx.
		\end{align*}
		It follows that
		\begin{align*}
		\frac{4}{\omega_3 R^3} \iint &\chi^2_R(z) \left[ |\chi_R(x-z) \nabla u(t,x)|^2 - \frac{3\alpha}{2(\alpha+2)} |\chi_R(x-z) u(t,x)|^{\alpha+2} \right] dxdz - 2R\int\partial_rV |u(t,x)|^2 dx \\
		&\leq \frac{d}{dt} \Mca_R(t) +\frac{6\alpha}{\alpha+2} \int (\phi_R-\phi_{1,R})(x) |u(t,x)|^{\alpha+2} dx + \frac{4\alpha}{\alpha+2} \int (\psi_R-\phi_R)(x) |u(t,x)|^{\alpha+2} dx \\
		&\mathrel{\phantom{\leq \frac{d}{dt} \Mca_R(t)}} - \int \nabla [3\phi_R(x)+2(\psi_R-\phi_R)(x)] \cdot \nabla (|u(t,x)|^2) dx.
		\end{align*}
		For fixed $z\in \R^3$, we have from the fact
		\begin{align}
		\int |\nabla (\chi f)|^2 dx = \int \chi^2 |\nabla f|^2 dx - \int \chi \Delta \chi |f|^2 dx \label{prop-chi}
		\end{align}
		that
		\[
		\int |\chi_R(x-z) \nabla u(t,x)|^2 dx = \|\nabla [\chi_R(\cdot-z) u(t)]\|^2_{L^2} + O(R^{-2}\|u(t)\|^2_{L^2})
		\]
		which, by \eqref{consequence-3}, implies
		\begin{align*}
		\int |\chi_R(x-z) \nabla u(t,x)|^2 dx &-\frac{3\alpha}{2(\alpha+2)} \int |\chi_R(x-z) u(t,x)|^{\alpha+2} dx \\
		&= \|\nabla [\chi_R(\cdot-z) u(t)]\|^2_{L^2} - \frac{3\alpha}{2(\alpha+2)} \|\chi_R(\cdot-z) u(t)\|^{\alpha+2}_{L^\alpha+2} + O(R^{-2}) \\
		&\geq \nu \|\nabla [\chi_R(\cdot-z) u(t)]\|^2_{L^2} + O(R^{-2})
		\end{align*}
		for all $t\in [0,\infty)$. Thus, we obtain
		\begin{align*}
		-R\int \partial_rV |u(t,x)|^2 dx &\leq \frac{d}{dt} \Mca_R(t)  + O(R^{-2}) - \int \nabla [3\phi_R(x)+2(\psi_R-\phi_R)(x)] \cdot \nabla (|u(t,x)|^2) dx \\
		&\mathrel{\phantom{\leq}} +\frac{6\alpha}{\alpha+2} \int (\phi_R-\phi_{1,R})(x) |u(t,x)|^{\alpha+2} dx + \frac{4\alpha}{\alpha+2} \int (\psi_R-\phi_R)(x) |u(t,x)|^{\alpha+2} dx
		\end{align*}
		which proves \eqref{est-V-radi-focus}.
	\hfill $\Box$
	
	By the same argument as in the proof Lemma \ref{lem-mora-est-focus} (but even simpler), we get the following result in the defocusing case.
	\begin{corollary} [Morawetz estimate in the defocusing case] \label{coro-mora-est-defocus}
		Let $\frac{4}{3}<\alpha<4$. Let $V:\R^3 \rightarrow \R$ be radially symmetric satisfying \eqref{cond-1-V}, \eqref{cond-2-V}, $x \cdot \nabla V \leq 0$, and $\partial_r V \in L^q$ for any $\frac{3}{2} \leq q \leq \infty$. Let $u_0 \in H^1$ and $u$ be the corresponding global solution to the defocusing problem \eqref{NLS-V}. Define $\Mca_R(t)$ as in \eqref{defi-mora-act}. Then we have
		\begin{align} 
		-R\int \partial_r V |u(t)|^2 dx &\leq \frac{d}{dt} \Mca_R(t) - \int \nabla [3\phi_R(x)+2(\psi_R-\phi_R)(x)] \cdot \nabla (|u(t,x)|^2) dx \label{est-V-radi-defocus}\\
		&\mathrel{\phantom{\leq}} -\frac{6\alpha}{\alpha+2} \int (\phi_R-\phi_{1,R})(x) |u(t,x)|^{\alpha+2} dx - \frac{4\alpha}{\alpha+2} \int (\psi_R-\phi_R)(x) |u(t,x)|^{\alpha+2} dx \nonumber
		\end{align}
		for any $t\in \R$.
	\end{corollary}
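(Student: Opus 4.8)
The plan is to mirror the proof of Lemma~\ref{lem-mora-est-focus} almost line by line: the only structural change is that in the defocusing problem the nonlinear term \eqref{term-1-mora} carries the opposite sign, and this makes the argument strictly simpler rather than harder.

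First I would record the setup. Since $u_0\in H^1$ and \eqref{cond-2-V} yields $\|\Lambda\cdot\|_{L^2}\sim\|\nabla\cdot\|_{L^2}$, conservation of mass and energy gives $\sup_{t\in\R}\|u(t)\|_{H^1}\le A$ for some $A=A(u_0)$, so the solution is global and the Morawetz identity \eqref{term-1-mora}--\eqref{term-4-mora}, with $-\frac{2\alpha}{\alpha+2}\int\psi_R(x)\,x\cdot\nabla(|u(t,x)|^{\alpha+2})\,dx$ now in place of \eqref{term-1-mora}, together with the bound \eqref{est-1-deri-mora-act}, holds for all $t\in\R$. Then, exactly as in Lemma~\ref{lem-mora-est-focus}: integrating by parts and using \eqref{deri-psi-j} rewrites the (new) \eqref{term-1-mora} as
\[
\frac{6\alpha}{\alpha+2}\int\phi_{1,R}|u|^{\alpha+2}\,dx+\frac{6\alpha}{\alpha+2}\int(\phi_R-\phi_{1,R})|u|^{\alpha+2}\,dx+\frac{4\alpha}{\alpha+2}\int(\psi_R-\phi_R)|u|^{\alpha+2}\,dx;
\]
two integrations by parts turn \eqref{term-2-mora} into $\int\nabla[3\phi_R+2(\psi_R-\phi_R)]\cdot\nabla(|u|^2)\,dx$; one integration by parts followed by the radial/angular splitting turns \eqref{term-3-mora} into $4\int\phi_R|\nabla u|^2\,dx+4\int(\psi_R-\phi_R)|\nnabla u|^2\,dx$; and, $V$ being radial, \eqref{proper-psi-3} gives $\eqref{term-4-mora}=-2\int\psi_R(x)|x|\,\partial_r V\,|u|^2\,dx\sim -2R\int\partial_r V|u|^2\,dx$.

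The decisive point is that the two leading pieces now combine with a favourable sign: expressing $\frac{6\alpha}{\alpha+2}\int\phi_{1,R}|u|^{\alpha+2}$ and $4\int\phi_R|\nabla u|^2$ as convolutions against $\chi_R^2(z)$ as in \eqref{term-1-mora-1-1} and \eqref{term-3-mora-1-1} produces
\[
\frac{4}{\omega_3 R^3}\iint\chi_R^2(z)\Big[\,|\chi_R(x-z)\nabla u|^2+\tfrac{3\alpha}{2(\alpha+2)}|\chi_R(x-z)u|^{\alpha+2}\,\Big]\,dx\,dz\ \ge\ 0,
\]
which is manifestly nonnegative; hence, in contrast to the focusing case, no coercivity input (Lemma~\ref{lem-coer-2}) is required and no $O(R^{-2})$ error appears. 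Discarding this quantity, discarding the nonnegative angular term $4\int(\psi_R-\phi_R)|\nnabla u|^2$ (using $\psi_R-\phi_R\ge0$ from \eqref{proper-psi-2}), and then rearranging with $x\cdot\nabla V\le0$, i.e. $\partial_r V\le0$, to isolate the potential term exactly as in the final display of the proof of Lemma~\ref{lem-mora-est-focus}, yields \eqref{est-V-radi-defocus}.

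Since the whole scheme is a simplification of an argument already carried out in full detail, I do not anticipate a genuine obstacle. The only points needing a little care are the bookkeeping of the implied constants when \eqref{proper-psi-3} is used to replace $\psi_R(x)|x|$ by $R$, and checking that the hypothesis $V\ge0$ of Lemma~\ref{lem-mora-est-focus} entered only through the (now absent) coercivity step, so that here \eqref{cond-2-V} alone suffices.
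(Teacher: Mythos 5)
Your proposal is correct and takes essentially the same route as the paper, which itself only remarks that the corollary follows by the argument of Lemma \ref{lem-mora-est-focus} "but even simpler": in the defocusing case the sign of \eqref{term-1-mora} flips, so after the integrations by parts the bulk term $4\int\phi_R|\nabla u|^2\,dx+\frac{6\alpha}{\alpha+2}\int\phi_{1,R}|u|^{\alpha+2}\,dx$ is nonnegative and may simply be discarded, with no coercivity input and no $O(R^{-2})$ correction. Your sign bookkeeping, the use of $\psi_R-\phi_R\geq 0$ and $\psi_R(x)|x|\sim R$ with $\partial_r V\leq 0$, and the observation that $V\geq 0$ entered the focusing proof only through the coercivity step (so that \eqref{cond-1-V}--\eqref{cond-2-V} suffice here for the uniform $H^1$ bound) are all accurate.
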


	We next define the interaction Morawetz action
	\begin{align} \label{defi-M-R}
	\Mca^{\otimes 2}_R(t):= \iint |u(t,y)|^2 \psi_R(x-y) (x-y) \cdot 2\ima \left( \overline{u}(t,x) \nabla u(t,x)\right) dx dy.
	\end{align}
	
	\begin{lemma} [Interaction Morawetz identity] 
		Let $u$ be a $H^1$-solution to \eqref{NLS} satisfying
		\begin{align*} 
		\sup_{t\in [0,T^*)} \|u(t)\|_{H^1} \leq A
		\end{align*}
		for some constant $A>0$. Let $\Mca_R(t)$ be as in \eqref{defi-M-R}. Then we have
		\begin{align} \label{est-M-R}
		\sup_{t\in [0,T^*)} |\Mca^{\otimes 2}_R(t)| \lesssim_A R.
		\end{align}
		Moreover, we have
		\begin{align}
		\frac{d}{dt} \Mca^{\otimes 2}_R(t) &= \mp \frac{2\alpha}{\alpha+2} \iint |u(t,y)|^2 \psi_R(x-y) (x-y) \cdot \nabla (|u(t,x)|^{\alpha+2}) dx dy \label{term-1} \\
		&\mathrel{\phantom{=}}+ \iint |u(t,y)|^2 \psi_R(x-y) (x-y) \cdot \nabla \Delta(|u(t,x)|^2) dx dy \label{term-2} \\
		&\mathrel{\phantom{=}} - 4 \sum_{j,k}\iint \partial_j \left[\ima (\overline{u}(t,y) \nabla u(t,y)) \right] \psi_R(x-y) (x_k-y_k) \ima (\overline{u}(t,x) \partial_k u(t,x)) dx dy \label{term-3}\\
		&\mathrel{\phantom{=}} - 4 \sum_{j,k} \iint |u(t,y)|^2 \psi_R(x-y) (x_j-y_j) \partial_k \left[ \rea (\partial_j u(t,x) \partial_k \overline{u}(t,x)) \right] dx dy \label{term-4} \\
		&\mathrel{\phantom{=}} -2 \iint |u(t,y)|^2 \psi_R(x-y) (x-y)\cdot \nabla V(x) |u(t,x)|^2 dx dy \label{term-5}
		\end{align}
		for all $t\in [0,T^*)$.
	\end{lemma}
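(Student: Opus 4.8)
The plan is to treat the two assertions separately. For the a priori bound \eqref{est-M-R}, I would simply note that by Lemma~\ref{lem-proper-psi} the vector field $w\mapsto\psi_R(w)\,w$ satisfies $|\psi_R(w)\,w|\lesssim R$ for all $w\in\R^3$ (indeed $|\psi_R(w)|\lesssim\min\{1,R/|w|\}$), so that the Cauchy--Schwarz inequality gives
\[
|\Mca^{\otimes 2}_R(t)| \;\lesssim\; R\iint |u(t,y)|^2\,|u(t,x)|\,|\nabla u(t,x)|\,dx\,dy \;\lesssim\; R\,\|u(t)\|_{L^2}^3\,\|\nabla u(t)\|_{L^2}\;\lesssim_A\; R,
\]
using the uniform $H^1$ bound on $u$.

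For the identity, I would differentiate $\Mca^{\otimes 2}_R(t)$ in $t$ under the integral sign and sort the output according to which factor the time derivative hits. When $\partial_t$ falls on $|u(t,y)|^2$, I would use the continuity equation $\partial_t|u|^2=-\operatorname{div}\bigl(2\ima(\overline u\nabla u)\bigr)$, which holds because $V$ and the nonlinearity are real, and keep the result in its unintegrated form; this is precisely \eqref{term-3}, the coefficient $-4$ coming from the factor $2$ in $2\ima(\overline u\nabla u)$ inside $\Mca^{\otimes 2}_R$ together with the factor $2$ in the continuity equation, and the double sum from the component indices of $\nabla u(t,y)$ and of $(x-y)$. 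When $\partial_t$ falls on $2\ima(\overline u(t,x)\nabla u(t,x))$, I would substitute the local momentum identity \eqref{deri-time} componentwise and sum over the index $j$ of $(x_j-y_j)$: the nonlinear term $\mp\frac{2\alpha}{\alpha+2}\partial_j(|u|^{\alpha+2})$ yields \eqref{term-1}; the diagonal part $+\partial_k(\delta_{jk}\Delta(|u|^2))$ collapses to $\partial_j\Delta(|u|^2)$ and yields \eqref{term-2}; the stress part $-4\,\partial_k\rea(\partial_j u\,\partial_k\overline u)$ yields \eqref{term-4}; and the potential part $-2\,\partial_j V\,|u|^2$ yields \eqref{term-5}. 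Adding the two groups of contributions produces exactly \eqref{term-1}--\eqref{term-5}.

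The algebra above is formal; the real content of the proof is to legitimize it for merely $H^1$ solutions, since at that regularity $\Delta(|u|^2)$, $\partial_k\rea(\partial_j u\,\partial_k\overline u)$ and $\nabla\Delta(|u|^2)$ are not classically defined and differentiation under the integral must be justified. I would proceed in the usual way: establish the identity first for smooth (or $H^2$) solutions, where \eqref{deri-time} and the continuity equation are classical, and then pass to the limit along regularized initial data using the local well-posedness theory and the uniform bound \eqref{global-bound-A}. The multiplier $\psi_R(x-y)(x-y)$ and all its derivatives are bounded by Lemma~\ref{lem-proper-psi}, and $|u(t)|^2$, $\ima(\overline u\nabla u)$ and $|u(t)|^{\alpha+2}$ lie in $L^1$ by conservation of mass and Sobolev embedding, so every integral converges; the potential contribution $(x-y)\cdot\nabla V(x)\,|u(t,x)|^2$ is integrable under the hypotheses on $V$ used in the applications (cf. Remark~\ref{rem-Lq}); and the terms \eqref{term-2} and \eqref{term-4} are to be read after one integration by parts onto the smooth multiplier, which is also how they will be used in the sequel. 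The main obstacle is therefore not the identity itself, which is a routine if lengthy computation, but this approximation and regularity bookkeeping for the double integral.
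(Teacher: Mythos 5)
Your proposal is correct and follows essentially the same route as the paper: the bound \eqref{est-M-R} from $|\psi_R(w)\,w|\lesssim R$ (via \eqref{proper-psi-1}) together with H\"older's inequality and the uniform $H^1$ bound, and the identity by differentiating under the integral, applying the continuity equation $\partial_t(|u|^2)=-\sum_j\partial_j\bigl[2\ima(\overline u\,\partial_j u)\bigr]$ when the derivative hits the $y$-factor and the local momentum identity \eqref{deri-time} when it hits the momentum density in $x$, which is exactly the paper's "direct computation". Your extra paragraph on legitimizing the formal computation for $H^1$ solutions by regularization is a reasonable refinement the paper leaves implicit, not a different approach.
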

	
	\begin{proof}
		The estimate \eqref{est-M-R} follows directly from \eqref{proper-psi-1} and H\"older's inequality. The identities \eqref{term-1}--\eqref{term-5} follow from a direct computation using 
		\begin{align*}
		\partial_t (|u|^2) = -\sum_{j} \partial_j[2\ima(\overline{u} \partial_j u)]
		\end{align*}
		and \eqref{deri-time}.
	\end{proof}
	
	\begin{proposition} [Interaction Morawetz estimate in the focusing case] \label{prop-inter-mora-est-focus}
		Let $\frac{4}{3}<\alpha<4$. Let $V:\R^3 \rightarrow \R$ be radially symmetric satisfying \eqref{cond-1-V}, $V\geq 0$, $x\cdot \nabla V \leq 0$, and $\partial_r V \in L^q$ for any $\frac{3}{2} \leq q \leq \infty$. Let $u$ be a $H^1$-solution to the focusing problem \eqref{NLS-V} satisfying \eqref{scat-cond}. Define $\Mca^{\otimes 2}_R(t)$ as in \eqref{defi-M-R}. Then for $\vareps>0$ sufficiently small, there exist $T_0 = T_0(\vareps)$, $J=J(\vareps)$, $R_0= R_0(\vareps, u_0,Q)$ sufficiently large and $\eta=\eta(\vareps)>0$ sufficiently small such that for any $a \in \R$,
		\begin{align} \label{inter-mora-est}
		\frac{1}{JT_0} \int_a^{a+T_0}\int_{R_0}^{R_0e^J} \frac{1}{R^3} \iiint \left|\chi_R(y-z)  u(t,y)\right|^2 \left| \nabla \left[ \chi_R(x-z) u^{\xi}(t,x) \right] \right|^2 dx dy dz \frac{dR}{R} dt \lesssim \vareps,
		\end{align}
		where $\chi_R(x)=\chi(x/R)$ with $\chi$ as in \eqref{defi-chi} and $u^\xi$ is as in \eqref{defi-u-xi} with some $\xi= \xi(t,z,R) \in \R^3$.
	\end{proposition}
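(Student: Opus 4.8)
The plan is to run the scheme of Dodson--Murphy \cite{DM-nonrad}; the one genuinely new point is that the interaction virial identity now contains a potential term, and that is where the single-particle Morawetz estimate \eqref{est-V-radi-focus} enters. By Lemma \ref{lem-coer-2}, the hypothesis \eqref{scat-cond} already gives $T^*=\infty$, a uniform bound $\sup_t\|u(t)\|_{H^1}\le A$ with $A=A(u_0,Q)$, and the coercivity \eqref{consequence-3} with some $\nu=\nu(u_0,Q)>0$; all implicit constants below may depend on $u_0,Q$.

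\emph{Step 1 (reduction of the first four terms).} Starting from the interaction Morawetz identity and integrating by parts in $x$ in \eqref{term-1}--\eqref{term-4}, using $\nabla_x\!\cdot\![\psi_R(x-y)(x-y)]=3\phi_R(x-y)+2(\psi_R-\phi_R)(x-y)$, the bounds of Lemma \ref{lem-proper-psi}, and the convolution structure \eqref{defi-phi}--\eqref{defi-phi-1} rewritten (after $z\mapsto z-y$) as $\phi_R(x-y)=\tfrac1{\omega_3R^3}\int\chi^2_R(x-z)\chi^2_R(z-y)\,dz$ and likewise for $\phi_{1,R}$: setting $m_z(t):=\int\chi^2_R(z-y)|u(t,y)|^2dy$, $P_z(t):=\int\chi^2_R(x-z)\ima(\overline u(t,x)\nabla u(t,x))dx$ and $K_z(t):=\int\chi^2_R(x-z)|\nabla u(t,x)|^2dx$, the sum \eqref{term-2}+\eqref{term-3}+\eqref{term-4} is, after discarding a nonnegative angular contribution, $\ge\tfrac4{\omega_3R^3}\int\big(m_zK_z-|P_z|^2\big)dz$ minus errors, while \eqref{term-1} equals $-\tfrac{6\alpha}{(\alpha+2)\omega_3R^3}\int m_z\|\chi_R(\cdot-z)u(t)\|^{\alpha+2}_{L^{\alpha+2}}dz$ up to an $O(\eta)$ error. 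I then choose $\xi=\xi(t,z,R):=-P_z/m_z$ (and $\xi:=0$ where $m_z=0$), i.e. the boost for which $\chi_R(\cdot-z)u^\xi(t)$ has zero momentum; the identity $m_zK_z-|P_z|^2=m_z\int\chi^2_R(x-z)|\nabla u^\xi(t,x)|^2dx$ together with \eqref{prop-chi} turns the kinetic part into $\tfrac4{\omega_3R^3}\int m_z\|\nabla[\chi_R(\cdot-z)u^\xi(t)]\|^2_{L^2}dz$ up to an $O(\eta^{-2}R^{-2})$ error, and $|u^\xi|=|u|$ identifies the nonlinear part with $-\tfrac{6\alpha}{(\alpha+2)\omega_3R^3}\int m_z\|\chi_R(\cdot-z)u^\xi(t)\|^{\alpha+2}_{L^{\alpha+2}}dz$. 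Since $4\cdot\tfrac{3\alpha}{2(\alpha+2)}=\tfrac{6\alpha}{\alpha+2}$, applying \eqref{consequence-3} with this $z,\xi$ yields
\[
\frac{d}{dt}\Mca^{\otimes2}_R(t)\ \gtrsim\ \nu\,G_R(t)-\mathcal{E}_R(t)-\big|\eqref{term-5}\big|,
\]
where $G_R(t):=\tfrac1{R^3}\iiint|\chi_R(y-z)u(t,y)|^2\big|\nabla[\chi_R(x-z)u^\xi(t,x)]\big|^2dx\,dy\,dz$ is the quantity in \eqref{inter-mora-est}, and $\mathcal{E}_R(t)\ge0$ collects the resulting error terms: each of them carries a factor $\eta$, $R^{-1}$, $\eta^{-2}R^{-2}$, or $(\psi_R-\phi_R)(x-y)$, the remaining factors being integrable in $(x,y)$ uniformly in $t$ with a bound depending only on $A$ (this uses $\int|u|^{\alpha+2}+\int|u||\nabla u|\lesssim_A1$).

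\emph{Step 2 (the potential term).} Since $V$ is radial, $\nabla V(x)=\partial_rV(x)\,x/|x|$, so by $\psi_R(w)|w|\lesssim R$ (see \eqref{proper-psi-3}) and mass conservation,
\[
\big|\eqref{term-5}\big|\ \le\ 2\iint|u(t,y)|^2\psi_R(x-y)|x-y|\,|\partial_rV(x)|\,|u(t,x)|^2dx\,dy\ \lesssim\ R\int|\partial_rV(x)|\,|u(t,x)|^2dx,
\]
which is finite for each $t$ by Remark \ref{rem-Lq}. Because $\partial_rV\le0$, the left side of \eqref{est-V-radi-focus} equals $R\int|\partial_rV||u(t)|^2dx\ge0$; integrating \eqref{est-V-radi-focus} over $t\in[a,a+T_0]$, averaging over $R\in[R_0,R_0e^J]$ with $dR/R$, and using $\sup_t|\Mca_R(t)|\lesssim_A R$, $|\nabla\phi_R|\lesssim R^{-1}$, $|\phi_R-\phi_{1,R}|\lesssim\eta$, together with the elementary estimate $\tfrac1J\int_{R_0}^{R_0e^J}(\psi_R-\phi_R)(x)\,\tfrac{dR}{R}\lesssim J^{-1}$ (immediate from $(\psi_R-\phi_R)(x)\lesssim\min\{|x|/R,R/|x|\}$ and the substitution $R=e^s$), I obtain
\[
\int_a^{a+T_0}\!\!\int|\partial_rV(x)|\,|u(t,x)|^2dx\,dt\ \lesssim_A\ 1+\frac{T_0}{R_0e^J}\,(1+\eta J).
\]

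\emph{Step 3 (conclusion).} Integrating the inequality of Step 1 over $t\in[a,a+T_0]$, using $\sup_t|\Mca^{\otimes2}_R(t)|\lesssim_A R$, then averaging in $R\in[R_0,R_0e^J]$ with $dR/R$ and dividing by $JT_0$: the boundary term is $\lesssim R_0e^J/(JT_0)$; each term of $\mathcal{E}_R$ produces a contribution $\lesssim_A\eta$, $\eta^{-2}R_0^{-2}J^{-1}$, $R_0^{-1}J^{-1}$, or $J^{-1}$ (the $(\psi_R-\phi_R)$ pieces being treated by the same logarithmic averaging as in Step 2, since $\iint|u(y)|^2|u(x)|^{\alpha+2}+\big(\int|u||\nabla u|\big)^2\lesssim_A1$); and by Step 2 the potential contribution is
\[
\frac1{JT_0}\int_a^{a+T_0}\!\!\int_{R_0}^{R_0e^J}\big|\eqref{term-5}\big|\,\frac{dR}{R}\,dt\ \lesssim_A\ \frac{R_0e^J}{JT_0}\int_a^{a+T_0}\!\!\int|\partial_rV||u|^2\,dt\ \lesssim_A\ \frac{R_0e^J}{JT_0}+\eta+\frac1J.
\]
Choosing, in this order, $J=J(\vareps)$ with $J\gtrsim_A\vareps^{-1}$, then $\eta=\eta(\vareps)$ with $\eta\lesssim_A\vareps$, then $R_0=R_0(\vareps,u_0,Q)$ large (in particular $R_0\gg\eta^{-1}$, to absorb the $\eta^{-2}R_0^{-2}$ term), and finally $T_0=T_0(\vareps)$ large enough that $R_0e^J\le\vareps\,JT_0$, makes every contribution $\lesssim\vareps$, which is \eqref{inter-mora-est}.

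The heart of the matter is the coupling in Steps 2--3. Unlike in \cite{DM-nonrad}, the interaction virial identity genuinely contains the potential term \eqref{term-5}, and the only pointwise bound available for it loses a full power of $R$; that loss is recovered \emph{only} because the single-particle Morawetz estimate \eqref{est-V-radi-focus}, averaged over the \emph{same} range $R\in[R_0,R_0e^J]$, forces $\int|\partial_rV||u|^2$ to be as small as $(R_0e^J)^{-1}$ on time average. The two Morawetz estimates must therefore be run in tandem, and the hierarchy $J\gg1$, then $\eta\ll1$, then $R_0\gg1$, then $T_0\gg1$ is forced; the remaining ingredients (the algebraic reduction, the Galilean boost, the coercivity \eqref{consequence-3}, the $O(\eta)$, $O(R^{-1})$, $O(\eta^{-2}R^{-2})$ book-keeping) are exactly as in \cite{DM-nonrad}.
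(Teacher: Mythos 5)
Your argument is correct and follows essentially the same route as the paper: the Dodson--Murphy reduction of \eqref{term-1}--\eqref{term-4} with the zero-momentum Galilean boost $\xi=-P_z/m_z$ (the paper phrases this as Galilean invariance of the integrand and the choice killing $\int\chi^2_R(x-z)\ima(\overline{u}^\xi\nabla u^\xi)\,dx$), the coercivity \eqref{consequence-3}, the bound $|\eqref{term-5}|\lesssim R\int|\partial_r V||u|^2$, and the recovery of that $R$-loss by feeding in the single-particle Morawetz estimate \eqref{est-V-radi-focus} averaged over the same $R$-range and time window, with the same parameter hierarchy (the paper takes $\eta=\vareps$, $J=\vareps^{-2}$, $R_0=\vareps^{-1}$, $T_0=e^{\vareps^{-2}}$). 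Your only departures are cosmetic: you package the potential contribution through the intermediate time-averaged bound $\int_a^{a+T_0}\int|\partial_r V||u|^2\lesssim 1+\tfrac{T_0}{R_0e^J}(1+\eta J)$ instead of substituting \eqref{est-V-radi-focus} directly into the averaged integral, and you track the $\eta$-dependence of the cutoff errors ($\eta^{-2}R^{-2}$), both of which reproduce the paper's bounds.
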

	
	\begin{proof}
		By integrating by parts and using \eqref{deri-psi-j}, we have
		\begin{align}
		\eqref{term-1} &= - \frac{2\alpha}{\alpha+2} \sum_{j} \iint |u(t,y)|^2 \partial_j [ (x_j-y_j) \psi_R(x-y)] |u(t,x)|^{\alpha+2} dxdy \nonumber \\
		&=-\frac{6\alpha}{\alpha+2} \iint |u(t,y)|^2 \phi_R(x-y) |u(t,x)|^{\alpha+2} dx dy \nonumber \\
		&\mathrel{\phantom{=}} - \frac{4\alpha}{\alpha+2} \iint |u(t,y)|^2 (\psi_R-\phi_R)(x-y) |u(t,x)|^{\alpha+2} dx dy \nonumber \\
		&= -\frac{6\alpha}{\alpha+2} \iint |u(t,y)|^2 (\phi_R-\phi_{1,R})(x-y) |u(t,x)|^{\alpha+2} dx dy \label{term-1-1} \\
		&\mathrel{\phantom{=}} - \frac{4\alpha}{\alpha+2} \iint |u(t,y)|^2 (\psi_R-\phi_R)(x-y) |u(t,x)|^{\alpha+2} dx dy \label{term-1-2} \\
		&\mathrel{\phantom{=}} - \frac{6\alpha}{\alpha+2} \iint |u(t,y)|^2 \phi_{1,R}(x-y) |u(t,x)|^{\alpha+2} dx dy, \label{term-1-3}
		\end{align}
		where $\phi_{1,R}$ is as in \eqref{defi-phi-1}. We will consider \eqref{term-1-1} and \eqref{term-1-2} as error terms. Moreover, we use the fact
		\[
		\phi_{1,R}(x-y) = \frac{1}{\omega_3 R^3} \int \chi^2_R(x-y-z) \chi^{\alpha+2}_R(z)dz = \frac{1}{\omega_3 R^3} \int \chi^2_R(y-z) \chi^{\alpha+2}_R(x-z) dz
		\]
		to write
		\begin{align} \label{term-1-3-1}
		\eqref{term-1-3} = -\frac{6\alpha}{(\alpha+2)\omega_3 R^3} \iiint \chi^2_R(y-z) \chi^{\alpha+2}_R(x-z) |u(t,y)|^2 |u(t,x)|^{\alpha+2} dx dydz. 
		\end{align}
		By integrating by parts twice and \eqref{deri-psi-j}, we see that
		\begin{align}
		\eqref{term-2} &= \sum_{j,k} \iint |u(t,y)|^2 \psi_R(x-y) (x_j-y_j) \partial_j \partial^2_k (|u(t,x)|^2) dx dy \nonumber \\
		&= - \sum_{j,k} \iint |u(t,y)|^2 \partial_j[\psi_R(x-y) (x_j-y_j)] \partial^2_k (|u(t,x)|^2) dx dy  \nonumber \\
		&= \sum_k \iint |u(t,y)|^2 \partial^x_k \left[3\phi_R(x-y) +2(\psi_R-\phi_R)(x-y) \right] \partial_k(|u(t,x)|^2) dx dy, \label{term-2-1} 
		\end{align}
		where $\partial^x_k$ is $\partial_k$ with respect to the $x$-variable. We next consider \eqref{term-3} and \eqref{term-4}. To this end, we denote
		\[
		P_{jk}(x-y) := \delta_{jk} - \frac{(x_j-y_j)(x_k-y_k)}{|x-y|^2}.
		\]
		By integration by parts, we have
		\begin{align}
		\eqref{term-3} &= - 4\sum_{j,k} \iint \partial_j[ \ima(\overline{u}(t,y) \partial_j u(t,y)] \psi_R(x-y) (x_k-y_k) \ima ( \overline{u}(t,x) \partial_k u(t,x)) dx dy \nonumber \\
		&=4 \sum_{j,k} \iint \ima(\overline{u}(t,y) \partial_j u(t,y)) \partial^y_j [\psi_R(x-y) (x_k-y_k)] \ima(\overline{u}(t,x) \partial_k(t,x)) dx dy \nonumber \\
		&=-4\sum_{j,k} \iint \ima(\overline{u}(t,y) \partial_j u(t,y)) \delta_{jk} \phi_R(x-y) \ima(\overline{u}(t,x) \partial_k u(t,x)) dx dy \label{term-3-1} \\
		&\mathrel{\phantom{=}} - 4\sum_{jk} \iint \ima(\overline{u}(t,y) \partial_j u(t,y)) P_{jk}(x-y) (\psi_R-\phi_R)(x-y) \ima(\overline{u}(t,x) \partial_k u(t,x)) dxdy, \label{term-3-2}
		\end{align}
		where $\partial^y_j$ is $\partial_j$ with respect to the $y$-variable. Similarly,
		\begin{align}
		\eqref{term-4} &=- 4\sum_{j,k} \iint |u(t,y)|^2 \psi_R(x-y) (x_j-y_j) \partial_k \left[\rea(\partial_j u(t,x) \partial_k \overline{u}(t,x)) \right] dx dy \nonumber \\
		&=4 \sum_{j,k} \iint |u(t,y)|^2 \partial^x_k [\psi_R(x-y) (x_j-y_j)] \rea(\partial_j u(t,x) \partial_k \overline{u}(t,x)) dx dy \nonumber \\
		&=4 \sum_{j,k} \iint |u(t,y)|^2 \delta_{jk} \phi_R(x-y) \rea(\partial_j u(t,x) \partial_k \overline{u}(t,x)) dx dy \label{term-4-1} \\
		&\mathrel{\phantom{=}}+ 4\sum_{j,k} \iint |u(t,y)|^2 P_{jk}(x-y) (\psi_R-\phi_R)(x-y) \rea(\partial_j u(t,x) \partial_k \overline{u}(t,x)) dx dy. \label{term-4-2}
		\end{align}
		We see that
		\begin{align*}
		\eqref{term-3-2} + \eqref{term-4-2} &= 4\iint |u(t,y)|^2 |\nnabla_y u(t,x)|^2 (\psi_R-\phi_R)(x-y) dx dy \\
		&\mathrel{\phantom{=}} - 4\iint \ima( \overline{u}(t,y) \nnabla_x u(t,y)) \cdot \ima( \overline{u}(t,x) \nnabla_y u(t,x)) (\psi_R-\phi_R)(x-y) dx dy,
		\end{align*}
		where
		\[
		\nnabla_y u(t,x):= \nabla u(t,x) - \frac{x-y}{|x-y|} \left(\frac{x-y}{|x-y|} \nabla u(t,x) \right)
		\]
		is the angular derivative centered at $y$, and similarly for $\nnabla_x u(t,y)$. By Cauchy-Schwarz inequality and the fact that $\psi-\phi$ is non-negative, we deduce 
		\begin{align} \label{term-3+4-2}
		\eqref{term-3-2}+\eqref{term-4-2} \geq 0.
		\end{align}
		We next have 
		\begin{align*}
		\eqref{term-3-1} +\eqref{term-4-1} = 4 \iint \phi_R(x-y) \left[|u(t,y)|^2 |\nabla u(t,x)|^2 - \ima(\overline{u}(t,y) \nabla u(t,y)) \cdot \ima(\overline{u}(t,x) \nabla u(t,x)) \right] dx dy.
		\end{align*}
		Using the fact that
		\[
		\phi_R(x-y) = \frac{1}{\omega_3 R^3} \int \chi^2_R(x-y-z) \chi^2_R(z) dz = \frac{1}{\omega_3 R^3} \int \chi^2_R(x-z) \chi^2_R(y-z)dz,
		\]
		we get
		\begin{multline*}
		\eqref{term-3-1} +\eqref{term-4-1} =\frac{4}{\omega_3 R^3} \iiint \chi^2_R(x-z) \chi^2_R(y-z) \\
		\times \Big[ |u(t,y)|^2 |\nabla u(t,x)|^2 - \ima (\overline{u}(t,y) \nabla u(t,y)) \cdot \ima (\overline{u}(t,x) \nabla u(t,x)) \Big] dx dy dz.
		\end{multline*}
		For fixed $z \in \R^3$, we consider the quantity defined by
		\[
		\iint \chi^2_R(x-z)  \chi^2_R (y-z)\left[ |u(t,y)|^2 |\nabla u(t,x)|^2 - \ima (\overline{u}(t,y) \nabla u(t,y)) \cdot \ima (\overline{u}(t,x) \nabla u(t,x)) \right] dxdy.
		\]
		We claim that this quantity is invariant under the Galilean transformation
		\[
		u(t,x) \mapsto u^\xi(t,x) := e^{ix \cdot \xi} u(t,x)
		\]
		for any $\xi = \xi(t,z,R)$. Indeed, one has
		\begin{align*}
		|u^\xi(y)|^2 |\nabla u^\xi(x)|^2 &- \ima( \overline{u}^\xi(y) \nabla u^\xi(y)) \cdot \ima( \overline{u}^\xi(x) \nabla u^\xi(x)) \\
		&= |u(y)|^2 |\nabla u(x)|^2 - \ima( \overline{u}(y) \nabla u(y)) \cdot \ima(\overline{u}(x) \nabla u(x)) \\
		&\mathrel{\phantom{=}} + \xi \cdot |u(y)|^2 \ima(\overline{u}(x)\nabla u(x)) - \xi \cdot |u(x)|^2 \ima(\overline{u}(y) \nabla u(y))
		\end{align*}
		and hence the claim follows by symmetry of $\chi$ and a change of variable. We now define $\xi = \xi (t,z,R)$ so that
		\[
		\int \chi^2_R(x-z) \ima(\overline{u}^\xi(t,x) \nabla u^\xi(t,x)) dx =0.
		\]
		In particular, we can achieve this by choosing
		\[
		\xi(t,z,R)= - \int \chi^2_R(x-z) \ima(\overline{u}(t,x) \nabla u(t,x)) dx \div \int \chi^2_R(x-z) |u(t,x)|^2 dx
		\]
		provided the denominator is non-zero (otherwise $\xi\equiv 0$ suffices). 
		
		For this choice of $\xi$, we have
		\begin{align} \label{term-3+4-1}
		\eqref{term-3-1} +\eqref{term-4-1} = \frac{4}{\omega_3 R^3} \iiint \chi^2_R(x-z) \chi^2_R(y-z)|u(t,y)|^2 |\nabla u^\xi(t,x)|^2 dx dy dz. 
		\end{align}
		We next estimate \eqref{term-5}. Since $V$ is radially symmetric and $x\cdot \nabla V \leq 0$, we write
		\begin{align}
			\eqref{term-5} = -2 \iint |u(t,y)|^2 \psi_R(x-y) (x-y)\cdot \frac{x}{|x|} \partial_rV |u(t,x)|^2 dx dy. \label{term-5-cas-1}
		\end{align}
		
		Collecting \eqref{term-1-1}, \eqref{term-1-2}, \eqref{term-1-3-1}, \eqref{term-2-1}, \eqref{term-3+4-2}, \eqref{term-3+4-1} and \eqref{term-5-cas-1}, we obtain
		\begin{align*}
		\frac{d}{dt} \Mca^{\otimes 2}_R(t) \geq &-\frac{6\alpha}{\alpha+2} \iint |u(t,y)|^2 (\phi_R-\phi_{1,R})(x-y) |u(t,x)|^{\alpha+2} dx dy \\
		&-\frac{4\alpha}{\alpha+2} \iint |u(t,y)|^2 (\psi_R-\phi_R)(x-y) |u(t,x)|^{\alpha+2} dx dy \\
		&-\frac{6\alpha}{(\alpha+2)\omega_3 R^3} \iiint |\chi_R(y-z) u(t,y)|^2 |\chi_R(x-z) u(t,x)|^{\alpha+2} dxdydz \\
		&+ \iint |u(t,y)|^2 \nabla \left[ 3\phi_R(x-y) + 2(\psi_R-\phi_R)(x-y)\right] \cdot \nabla (|u(t,x)|^2) dx dy \\
		&+\frac{4}{\omega_3 R^3} \iiint |\chi_R(y-z) u(t,y)|^2 |\chi_R(x-z) \nabla u^\xi(t,x)|^2 dx dy dz \\
		&-2 \iint |u(t,y)|^2 \psi_R(x-y) (x-y)\cdot \frac{x}{|x|} \partial_rV |u(t,x)|^2 dx dy.
		\end{align*}
		It follows that
		\begin{align}
		\frac{4}{\omega_3R^3} \iiint &|\chi_R(y-z) u(t,y)|^2  \left[|\chi_R(x-z)\nabla u^\xi(t,x)|^2 - \frac{3\alpha}{2(\alpha+2)}|\chi_R(x-z) u(t,x)|^{\alpha+2} \right] dxdydz \label{lhs} \\
		& \leq \frac{d}{dt} \Mca^{\otimes 2}_R(t) + \frac{4\alpha}{\alpha+2} \iint |u(t,y)|^2 (\psi_R-\phi_R)(x-y) |u(t,x)|^{\alpha+2} dx dy \nonumber \\
		&\mathrel{\phantom{\leq \frac{d}{dt} \Mca^{\otimes 2}_R(t)}} + \frac{6\alpha}{\alpha+2} \iint |u(t,y)|^2 (\phi_R-\phi_{1,R})(x-y) |u(t,x)|^{\alpha+2} dx dy \nonumber \\
		&\mathrel{\phantom{\leq \frac{d}{dt} \Mca^{\otimes 2}_R(t)}} - \iint |u(t,y)|^2 \nabla \left[3\phi_R(x-y) + 2 (\psi_R-\phi_R)(x-y) \right] \cdot \nabla (|u(t,x)|^2) dx dy \nonumber \\
		&\mathrel{\phantom{\leq \frac{d}{dt} \Mca^{\otimes 2}_R(t)}} + 2\iint |u(t,y)|^2 \psi_R(x-y) (x-y)\cdot \frac{x}{|x|} \partial_rV |u(t,x)|^2 dx dy. \label{term-6}
		\end{align}
		Let us estimate the terms appeared from the second to the fifth lines. 	By \eqref{est-M-R}, we see that
		\begin{align}
		\left| \frac{1}{JT_0} \int_a^{a+T_0} \int_{R_0}^{R_0 e^J} \frac{d}{dt} \Mca^{\otimes 2}_R(t) \frac{dR}{R} dt \right| &\leq \frac{1}{JT_0} \int_{R_0}^{R_0e^J} \sup_{t\in [a,a+T_0]} |\Mca^{\otimes 2}_R(t)| \frac{dR}{R} \nonumber \\
		&\lesssim \frac{1}{JT_0} \int_{R_0}^{R_0 e^J} dR \lesssim \frac{R_0e^J}{JT_0}. \label{est-term-1}
		\end{align}
		Using \eqref{proper-psi-3}, we see that
		\begin{align}
		\Big|\frac{1}{JT_0} \int_a^{a+T_0} \int_{R_0}^{R_0e^J} &\iint |u(t,y)|^2 (\psi_R-\phi_R)(x-y) |u(t,x)|^{\alpha+2} dxdy \frac{dR}{R} dt \Big| \nonumber \\
		&\lesssim \frac{1}{JT_0} \int_a^{a+T_0} \int_{R_0}^{R_0e^J} \iint |u(t,y)|^2 \min \left\{ \frac{|x-y|}{R}, \frac{R}{|x-y|}\right\} |u(t,x)|^{\alpha+2} dx dy \frac{dR}{R} dt \nonumber \\
		&\lesssim \frac{1}{JT_0} \int_a^{a+T_0} \iint |u(t,y)|^2 |u(t,x)|^{\alpha+2} \left( \int_{R_0}^{R_0e^J} \min \left\{ \frac{|x-y|}{R}, \frac{R}{|x-y|}\right\} \frac{dR}{R} \right) dx dy dt \nonumber \\
		&\lesssim \frac{1}{J}. \label{est-term-2}
		\end{align}
		Here we have used the fact that $\sup_{t\in \R} \|u(t)\|_{H^1} \leq C(u_0,Q)<\infty$ and 
		\begin{align} \label{int-R}
		\int_{R_0}^{R_0e^J} \min \left\{ \frac{|x-y|}{R}, \frac{R}{|x-y|}\right\} \frac{dR}{R} \lesssim 1.
		\end{align}
		To see \eqref{int-R}, we have
		\begin{align*}
		\text{LHS}\eqref{int-R} &= \int_{R_0}^{R_0e^J} \frac{|x-y|}{R} \mathds{1}_{\{|x-y|\leq R\}} \frac{dR}{R} + \int_{R_0}^{R_0e^J} \frac{R}{|x-y|} \mathds{1}_{\{|x-y|\geq R\}} \frac{dR}{R} \\
		&= \int_{\max\{|x-y|,R_0\}}^{R_0e^J} \frac{|x-y|}{R^2}dR + \int_{R_0}^{\min \{|x-y|,R_0e^J\}} \frac{dR}{|x-y|} \\
		&= |x-y| \left(\frac{1}{\max\{|x-y|,R_0\}} - \frac{1}{R_0e^J}\right) + \frac{1}{|x-y|} \left( \min \{|x-y|, R_0e^J\} -R_0\right) \\
		&\lesssim 1.
		\end{align*}
		Using \eqref{proper-psi-2}, we have
		\begin{align}
		\Big|\frac{1}{JT_0} \int_a^{a+T_0} \int_{R_0}^{R_0e^J} &\iint |u(t,y)|^2 (\phi_R-\phi_{1,R})(x-y) |u(t,x)|^{\alpha+2} dx dy \frac{dR}{R} dt \Big| \nonumber \\
		&\lesssim \frac{1}{JT_0} \int_a^{a+T_0} \int_{R_0}^{R_0e^J} \eta \frac{dR}{R} dt \nonumber \\
		&\lesssim \eta. \label{est-term-3}
		\end{align}
		Using the fact $|\nabla \phi_R(x)| \lesssim \frac{1}{R}$, we see that
		\begin{align}
		\Big| \frac{1}{JT_0} \int_a^{a+T_0} \int_{R_0}^{R_0e^J} &\iint |u(t,y)|^2 \nabla \phi_R(x-y) \cdot \nabla (|u(t,x)|^2) dx dy \frac{dR}{R} dt \Big| \nonumber \\
		&\lesssim \frac{1}{JT_0} \int_a^{a+T_0} \int_{R_0}^{R_0e^J} \|u(t)\|^3_{L^2} \|\nabla u(t)\|_{L^2} \frac{dR}{R^2} dt \nonumber \\
		&\lesssim \frac{1}{JT_0} \int_a^{a+T_0} \int_{R_0}^{R_0e^J} \frac{dR}{R^2} dt \nonumber \\
		&\lesssim \frac{1}{JR_0}. \label{est-term-4}
		\end{align}
		Similarly, as $|\nabla(\psi_R-\phi_R)(x)| \lesssim \min \left\{\frac{1}{R},\frac{R}{|x|^2} \right\} <\frac{1}{R}$, we have
		\begin{align}
		\Big| \frac{1}{JT_0} \int_a^{a+T_0} \int_{R_0}^{R_0e^J}  \iint |u(t,y)|^2 \nabla (\psi_R-\phi_R)(x-y) \cdot \nabla(|u(t,x)|^2) dx dy \frac{dR}{R} dt\Big| \lesssim \frac{1}{JR_0}. \label{est-term-5}
		\end{align}
		We next consider the term in \eqref{term-6}. Note that this term does not appear in the case $V$ is non-radially symmetric. By H\"older's inequality and the conservation of mass, we have
		\[
		\Big| \frac{1}{JT_0} \int_a^{a+T_0} \int_{R_0}^{R_0e^J} \eqref{term-6} \frac{dR}{R} dt \Big| \lesssim  \Big| \frac{1}{JT_0} \int_a^{a+T_0} \int_{R_0}^{R_0e^J} \left(-R\int \partial_rV |u(t,x)|^2 dx\right) \frac{dR}{R} dt \Big|.
		\]
		We then use the Morawetz estimate given in Lemma $\ref{lem-mora-est-focus}$ to get
		\begin{align}
		\Big| \frac{1}{JT_0} \int_a^{a+T_0} \int_{R_0}^{R_0e^J} \eqref{term-6} \frac{dR}{R} dt \Big| &\lesssim  \Big|\frac{1}{JT_0} \int_a^{a+T_0} \int_{R_0}^{R_0 e^J} \frac{d}{dt} \Mca_R(t) \frac{dR}{R} dt \Big| \label{est-term-6-1} \\
		&\mathrel{\phantom{\lesssim}} + \Big|\frac{1}{JT_0} \int_a^{a+T_0} \int_{R_0}^{R_0 e^J} O(R^{-2}) \frac{dR}{R} dt \Big| \label{est-term-6-2} \\
		&\mathrel{\phantom{\lesssim}} + \Big|\frac{1}{JT_0} \int_a^{a+T_0} \int_{R_0}^{R_0 e^J} \int \nabla \phi_R(x)\cdot \nabla (|u(t,x)|^2) dx \frac{dR}{R} dt \Big| \label{est-term-6-3} \\
		&\mathrel{\phantom{\lesssim}} + \Big|\frac{1}{JT_0} \int_a^{a+T_0} \int_{R_0}^{R_0 e^J} \int \nabla (\psi_R-\phi_R)(x)\cdot \nabla (|u(t,x)|^2) dx \frac{dR}{R} dt \Big| \label{est-term-6-4} \\
		&\mathrel{\phantom{\lesssim}} + \Big|\frac{1}{JT_0} \int_a^{a+T_0} \int_{R_0}^{R_0 e^J} \int (\phi_R-\phi_{1,R})(x) |u(t,x)|^{\alpha+2} dx \frac{dR}{R} dt \Big| \label{est-term-6-5} \\
		&\mathrel{\phantom{\lesssim}} + \Big|\frac{1}{JT_0} \int_a^{a+T_0} \int_{R_0}^{R_0 e^J} \int (\psi_R-\phi_R)(x) |u(t,x)|^{\alpha+2} dx \frac{dR}{R} dt \Big|. \label{est-term-6-6} 
		\end{align}
		The term \eqref{est-term-6-1} is estimated as for \eqref{est-term-1} using \eqref{est-1-deri-mora-act}. The terms \eqref{est-term-6-2} and \eqref{est-term-6-3} are treated as for \eqref{est-term-4}. The terms \eqref{est-term-6-4}, \eqref{est-term-6-5} and \eqref{est-term-6-6} are respectively estimated as for \eqref{est-term-5}, \eqref{est-term-3} and \eqref{est-term-2}. 
		
		Combining \eqref{lhs}, \eqref{est-term-1}, \eqref{est-term-2}, \eqref{est-term-3}, \eqref{est-term-4} and \eqref{est-term-5}--\eqref{est-term-6-6}, we obtain 
		\begin{align}
		\Big|\frac{1}{JT_0} \int_a^{a+T_0} \int_{R_0}^{R_0e^J} \frac{1}{R^N} &\iiint |\chi_R(y-z) u(t,y)|^2 \nonumber \\
		&\times \Big[ |\chi_R(x-z)\nabla u^\xi(t,x)|^2 - \frac{N\alpha}{2(\alpha+2)} |\chi_R(x-z) u(t,x)|^{\alpha+2} \Big] dxdydz \frac{dR}{R} dt \Big| \nonumber \\
		&\lesssim \frac{R_0e^J}{JT_0} + \frac{1}{J} +\eta + \frac{1}{JR_0}. \label{est-lhs}
		\end{align}
		Now, for fixed $z, \xi \in \R^N$, we have from \eqref{prop-chi} that
		\[
		\int |\chi_R(x-z) \nabla u^\xi(t,x)|^2 dx =\|\nabla[ \chi_R(\cdot-z) u^{\xi}(t)]\|^2_{L^2} + O(R^{-2} \|u(t)\|^2_{L^2}).
		\]
		It follows that from the conservation of mass and \eqref{consequence-3} that for $R\geq R_0$ with $R_0$ sufficiently large,
		\begin{align*}
		\int |\chi_R(x-z) \nabla u^{\xi}(t,x)|^2 dx &- \frac{N\alpha}{2(\alpha+2)} \int |\chi_R(x-z) u(t,x)|^{\alpha+2} dx  \\
		&= \|\nabla[\chi_R(\cdot-z) u^{\xi}(t)]\|^2_{L^2} - \frac{N\alpha}{2(\alpha+2)} \|\chi_R(\cdot-z) u^{\xi}(t)\|^{\alpha+2}_{L^{\alpha+2}} + O(R^{-2}) \\
		&\geq \nu \|\nabla[\chi_R(\cdot-z) u^{\xi}(t)]\|^2_{L^2} + O(R^{-2}).
		\end{align*}
		The term $O(R^{-2})$ can be treated as in \eqref{est-term-4}. We thus infer from \eqref{est-lhs} that
		\begin{align*}
		\Big| \frac{1}{JT_0} \int_a^{a+T_0} \int_{R_0}^{R_0e^J} \frac{1}{R^N} \iiint |\chi_R(y-z) u(t,y)|^2 &|\nabla[\chi_R(x-z) u^\xi(t,x)]|^2 dxdydz \frac{dR}{R} dt \Big| \\
		&\lesssim \frac{R_0e^J}{JR_0} +\frac{1}{J} +\eta +\frac{1}{JR_0}.
		\end{align*}
		This proves \eqref{inter-mora-est} by taking $\eta=\vareps, J=\vareps^{-2}$, $R_0=\vareps^{-1}$ and $T_0=e^{\vareps^{-2}}$.
	\end{proof}
	
	Performing the same arguments as above, we get the following interaction Morawetz estimate in the defocusing case.
	\begin{corollary} [Interaction Morawetz estimate in the defocusing case] \label{coro-inter-mora-est-defocus}
		Let $\frac{4}{3}<\alpha<4$. Let $V:\R^3 \rightarrow \R$ satisfy \eqref{cond-1-V}, \eqref{cond-2-V}, and \eqref{cond-3-V}. Let $u_0 \in H^1$ and $u$ be the corresponding global solution to the defocusing problem \eqref{NLS-V}. Define $\Mca^{\otimes 2}_R(t)$ as in \eqref{defi-M-R}. Then for $\vareps>0$ sufficiently small, there exist $T_0 = T_0(\vareps)$, $J=J(\vareps)$, $R_0= R_0(\vareps)$ sufficiently large and $\eta=\eta(\vareps)>0$ sufficiently small such that for any $a \in \R$,
		\begin{align} \label{inter-mora-est-defocus}
		\frac{1}{JT_0} \int_a^{a+T_0}\int_{R_0}^{R_0e^J} \frac{1}{R^3} \iiint \left|\chi_R(y-z)  u(t,y)\right|^2 \left| \nabla \left[ \chi_R(x-z) u^\xi(t,x) \right] \right|^2 dx dy dz \frac{dR}{R} dt \lesssim \vareps,
		\end{align}
		where $\chi_R(x)=\chi(x/R)$ with $\chi$ as in \eqref{defi-chi}, and $u^\xi$ is as in \eqref{defi-u-xi} with some $\xi= \xi(t,z,R) \in \R^3$.
	\end{corollary}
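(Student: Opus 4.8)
\emph{Sketch.} The plan is to rerun the proof of Proposition~\ref{prop-inter-mora-est-focus} almost verbatim, replacing its three focusing ingredients by their defocusing counterparts: the coercivity Lemma~\ref{lem-coer-2} is not needed, the one-particle estimate of Lemma~\ref{lem-mora-est-focus} is replaced by Corollary~\ref{coro-mora-est-defocus}, and the sign in front of the nonlinearity is reversed. First, since in the defocusing case $E(u_0)$ controls $\|\nabla u(t)\|_{L^2}$ through the equivalence of the Sobolev norms $\|\Lambda\cdot\|_{L^2}\sim\|\nabla\cdot\|_{L^2}$ (see \eqref{sobo-norm-L2} and \eqref{cond-2-V}), the conservation laws give $\sup_{t\in\R}\|u(t)\|_{H^1}\le A=A(u_0)<\infty$, so $u$ is global and \eqref{est-M-R} applies. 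With the identity \eqref{term-1}--\eqref{term-5} for $\Mca^{\otimes2}_R$ at hand, the decisive sign change occurs in \eqref{term-1}: in the defocusing case, after integration by parts and \eqref{deri-psi-j}, it becomes $\tfrac{2\alpha}{\alpha+2}\iint|u(t,y)|^2[\phi_R+2\psi_R](x-y)\,|u(t,x)|^{\alpha+2}\,dxdy$ up to the error terms \eqref{term-1-1}--\eqref{term-1-2}, which is nonnegative and may simply be discarded, so no coercivity is required. The kinetic contribution $\eqref{term-3-1}+\eqref{term-4-1}$ is handled exactly as before: after the same Galilean normalization $\xi=\xi(t,z,R)$ used for \eqref{term-3+4-1} it equals $\tfrac{4}{\omega_3R^3}\iiint\chi_R^2(y-z)\chi_R^2(x-z)\,|u(t,y)|^2|\nabla u^\xi(t,x)|^2\,dxdydz$, while $\eqref{term-3-2}+\eqref{term-4-2}\ge 0$; and \eqref{term-2} together with the $\nabla\phi_R$ and $\nabla(\psi_R-\phi_R)$ pieces produce only error contributions, bounded over $[a,a+T_0]\times[R_0,R_0e^J]$ as in \eqref{est-term-2}--\eqref{est-term-5}.

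It remains to dispose of the potential term \eqref{term-5} under the two alternatives of \eqref{cond-3-V}. When $V$ is radial with $x\cdot\nabla V\le 0$ and $\partial_r V\in L^q$ for all $q\in[\tfrac{3}{2},\infty]$, one follows the focusing proof: $\eqref{term-5}=-2\iint|u(t,y)|^2\psi_R(x-y)(x-y)\cdot\tfrac{x}{|x|}\partial_r V(x)\,|u(t,x)|^2\,dxdy$, and bounding the weight by $O(R)$ (cf.\ \eqref{proper-psi-3}) together with the conservation of mass reduces this to $R\int(-\partial_r V)|u(t)|^2\,dx$, which is controlled by the defocusing one-particle estimate \eqref{est-V-radi-defocus} of Corollary~\ref{coro-mora-est-defocus}; averaging in $dR/R$ and $dt$ then yields the same error budget as \eqref{est-term-6-1}--\eqref{est-term-6-6}. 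When $V$ is non-radial with $x\cdot\nabla V\in L^{3/2}$ (which secures integrability of the expressions below) and $\nabla^2 V$ non-positive definite, the potential term is instead handled by symmetrization: relabelling $x\leftrightarrow y$ in \eqref{defi-M-R} and using that $\psi_R$ is even gives $\Mca^{\otimes2}_R(t)=-\iint|u(t,x)|^2\psi_R(x-y)(x-y)\cdot 2\ima(\overline u(t,y)\nabla u(t,y))\,dxdy$, so the potential contribution to $\tfrac{d}{dt}\Mca^{\otimes2}_R$ may equally be written with $\nabla V(y)$ in place of $\nabla V(x)$; averaging the two representations, it equals
\[
-\iint|u(t,x)|^2|u(t,y)|^2\,\psi_R(x-y)\,(x-y)\cdot\big[\nabla V(x)-\nabla V(y)\big]\,dxdy .
\]
Since $(x-y)\cdot[\nabla V(x)-\nabla V(y)]=\int_0^1\nabla^2 V(y+s(x-y))(x-y)\cdot(x-y)\,ds\le 0$ when $\nabla^2 V$ is non-positive definite, and $\psi_R\ge 0$, this quantity is nonnegative and can be discarded; this is precisely the assertion, already recorded in the proof of Proposition~\ref{prop-inter-mora-est-focus}, that the potential term ``does not appear in the case $V$ is non-radially symmetric''.

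Assembling these estimates, one arrives, exactly as in the derivation of \eqref{est-lhs}, at
\[
\frac{1}{JT_0}\int_a^{a+T_0}\!\int_{R_0}^{R_0e^J}\frac{1}{R^3}\iiint|\chi_R(y-z)u(t,y)|^2\,|\chi_R(x-z)\nabla u^\xi(t,x)|^2\,dxdydz\,\frac{dR}{R}\,dt\lesssim\frac{R_0e^J}{JT_0}+\frac1J+\eta+\frac1{JR_0},
\]
and then \eqref{prop-chi} and the conservation of mass let one replace $\int|\chi_R(x-z)\nabla u^\xi(t,x)|^2\,dx$ by $\|\nabla[\chi_R(\cdot-z)u^\xi(t)]\|_{L^2}^2$ up to an $O(R^{-2})$ error absorbed as in \eqref{est-term-4}. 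Choosing $\eta=\vareps$, $J=\vareps^{-2}$, $R_0=\vareps^{-1}$, $T_0=e^{\vareps^{-2}}$ then gives \eqref{inter-mora-est-defocus}. I expect the only genuinely delicate point to be the non-radial potential term: it is the symmetrization that turns \eqref{term-5} into an expression governed by the Hessian $\nabla^2 V$, and this is exactly where the assumption that $\nabla^2 V$ be non-positive definite enters and where the argument departs from the radial scheme.
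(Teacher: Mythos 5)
Your proposal is correct, and apart from one step it follows the paper's own route: the defocusing sign turns the nonlinear contribution of \eqref{term-1} into $\frac{2\alpha}{\alpha+2}\iint|u(t,y)|^2\,[3\phi_R+2(\psi_R-\phi_R)](x-y)\,|u(t,x)|^{\alpha+2}\,dxdy\geq 0$, which you discard wholesale (the paper keeps the $(\phi_R-\phi_{1,R})$ piece as an $O(\eta)$ error and discards the rest; both are fine, yours is marginally simpler and indeed no coercivity is needed), the Galilean normalization and the error budget are identical, and in the radial alternative of \eqref{cond-3-V} you bound \eqref{term-5} by $R\int(-\partial_rV)|u(t)|^2dx$ and invoke Corollary \ref{coro-mora-est-defocus}, exactly as the paper does. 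The genuine difference is the non-radial alternative: you symmetrize in $x\leftrightarrow y$ (using that $\psi_R$ is even) to rewrite \eqref{term-5} as $-\iint|u(t,x)|^2|u(t,y)|^2\,\psi_R(x-y)\,(x-y)\cdot[\nabla V(x)-\nabla V(y)]\,dxdy$, whose non-negativity follows from $(x-y)\cdot[\nabla V(x)-\nabla V(y)]=\int_0^1(x-y)^T\nabla^2V(y+s(x-y))(x-y)\,ds\leq 0$, i.e.\ from the non-positive definiteness of $\nabla^2V$ alone. The paper instead splits $\nabla V(x)=\nabla V(x-y)+[\nabla V(x)-\nabla V(x-y)]$, using $x\cdot\nabla V\leq 0$ (evaluated at $x-y$) for the first piece and the Hessian hypothesis for the second; but the increment there is $\nabla V(x)-\nabla V(x-y)=\int_0^1\nabla^2V(x-y+\theta y)\,y\,d\theta$, a bilinear form pairing $(x-y)$ with $y$ rather than the quadratic form in $(x-y)$ written in the paper, so its sign does not follow from non-positive definiteness alone. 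Your symmetrization is therefore the cleaner (and fully justified) way to extract the sign, and it correctly identifies where $\nabla^2V\leq 0$ enters. The one point you leave at the same formal level as the paper is the a priori integrability of $\nabla V(x)\,\psi_R(x-y)(x-y)\,|u(t,x)|^2|u(t,y)|^2$ in the non-radial case, which $x\cdot\nabla V\in L^{3/2}$ does not by itself guarantee; this caveat is shared with the paper's own argument and does not affect the comparison.
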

	
	\begin{proof}
	The proof is similar to that of Proposition \ref{prop-inter-mora-est-focus} except for the term \eqref{term-5}. To treat this term, we consider two cases. 
	
	{\bf Case 1:} $V$ is radially symmetric and $x\cdot \nabla V \leq 0$. We simply write
	\begin{align*}
	\eqref{term-5} = -2 \iint |u(t,y)|^2 \psi_R(x-y) (x-y)\cdot \frac{x}{|x|} \partial_rV |u(t,x)|^2 dx dy. 
	\end{align*}
	
	{\bf Case 2:} $V$ is non-radially symmetric, $x\cdot \nabla V\leq 0$ and $\nabla^2 V$ is non-positive definite. We write
	\begin{align*}
	\eqref{term-5} &= - 2\iint |u(t,y)|^2 \psi_R(x-y) (x-y) \cdot \nabla V(x-y) |u(t,x)|^2 dx dy \nonumber \\
	& \mathrel{\phantom{=}} - 2\iint |u(t,y)|^2 \psi_R(x-y) (x-y) \cdot [\nabla V(x) - \nabla V(x-y)] |u(t,x)|^2 dx dy \nonumber \\
	&= - 2\iint |u(t,y)|^2 \psi_R(x-y) (x-y) \cdot \nabla V(x-y) |u(t,x)|^2 dx dy \nonumber \\
	& \mathrel{\phantom{=}} - 2\iint |u(t,y)|^2 \psi_R(x-y) \int_0^1 (x-y)  \nabla^2 V(x-y+\theta y) (x-y)^T d\theta |u(t,x)|^2 dx dy \nonumber \\
	&\geq 0. 
	\end{align*}
	We get
	\begin{align*}
	\frac{d}{dt} \Mca^{\otimes 2}_R(t) \geq &\frac{6\alpha}{\alpha+2} \iint |u(t,y)|^2 (\phi_R-\phi_{1,R})(x-y) |u(t,x)|^{\alpha+2} dx dy \\
	&+\frac{4\alpha}{\alpha+2} \iint |u(t,y)|^2 (\psi_R-\phi_R)(x-y) |u(t,x)|^{\alpha+2} dx dy \\
	&+\frac{6\alpha}{(\alpha+2)\omega_3 R^3} \iiint |\chi_R(y-z) u(t,y)|^2 |\chi_R(x-z) u(t,x)|^{\alpha+2} dxdydz \\
	&+ \iint |u(t,y)|^2 \nabla \left[ 3\phi_R(x-y) + 2(\psi_R-\phi_R)(x-y)\right] \cdot \nabla (|u(t,x)|^2) dx dy \\
	&+\frac{4}{\omega_3 R^3} \iiint |\chi_R(y-z) u(t,y)|^2 |\chi_R(x-z) \nabla u^\xi(t,x)|^2 dx dy dz \\
	&-2 \iint |u(t,y)|^2 \psi_R(x-y) (x-y)\cdot \frac{x}{|x|} \partial_rV |u(t,x)|^2 dx dy.
	\end{align*}
	By the defocusing nature and the fact $\psi_R-\phi_R \geq 0$, we infer that
	\begin{align*}
	\frac{4}{\omega_3R^3} \iiint &|\chi_R(y-z) u(t,y)|^2  |\chi_R(x-z)\nabla u^\xi(t,x)|^2 dxdydz  \\
	& \leq \frac{d}{dt} \Mca^{\otimes 2}_R(t)  - \frac{6\alpha}{\alpha+2} \iint |u(t,y)|^2 (\phi_R-\phi_{1,R})(x-y) |u(t,x)|^{\alpha+2} dx dy \nonumber \\
	&\mathrel{\phantom{\leq \frac{d}{dt} \Mca^{\otimes 2}_R(t)}} - \iint |u(t,y)|^2 \nabla \left[3\phi_R(x-y) + 2 (\psi_R-\phi_R)(x-y) \right] \cdot \nabla (|u(t,x)|^2) dx dy \nonumber \\
	&\mathrel{\phantom{\leq \frac{d}{dt} \Mca^{\otimes 2}_R(t)}} + 2\iint |u(t,y)|^2 \psi_R(x-y) (x-y)\cdot \frac{x}{|x|} \partial_rV |u(t,x)|^2 dx dy. 
	\end{align*}
	Here we use the convention $\partial_r V=0$ if $V$ is non-radially symmetric. The rest follows by the same argument as in the proof of Proposition \ref{prop-inter-mora-est-focus} using Corollary \ref{coro-mora-est-defocus} instead of Lemma \ref{lem-mora-est-focus}. The proof is complete.
	\end{proof}

	\section{Scattering criterion}
	\label{S5}
	\setcounter{equation}{0}
	
	\noindent {\it Proof of Theorem $\ref{theo-scat-crite}$.}
	We only give the proof in the focusing case. The one in the defocusing case is similar.	Our purpose is to check the scattering criteria given in Lemma $\ref{lem-scat-crite}$. To this end, we fix $a \in \R$ and let $\vareps>0$ sufficiently small and $T_0>0$ sufficiently large to be determined later. We will show that there exists $t_0 \in (a,a+T_0)$ such that $[t_0-\vareps^{-\sigma},t_0] \subset (a,a+T_0)$ and
	\begin{align} \label{scat-crite-check}
	\|u\|_{L^q([t_0-\vareps^{-\sigma},t_0]\times \R^3)} \lesssim \vareps^\mu
	\end{align}
	for some $\sigma, \mu>0$ satisfying \eqref{cond-sigma-mu}, where $q$ is as in \eqref{defi-q}. By \eqref{inter-mora-est}, there exist $T_0 =T_0(\vareps), J=J(\vareps)$, $R_0=R_0(\vareps,u_0,Q)$ and $\eta=\eta(\vareps)$ such that
	\[
	\frac{1}{JT_0} \int_a^{a+T_0} \int_{R_0}^{R_0e^J} \frac{1}{R^3} \iiint |\chi_R(y-z) u(t,y)|^2 |\nabla[\chi_R(x-z) u^{\xi}(t,x)]|^2 dxdydz \frac{dR}{R} dt \lesssim \vareps.
	\]
	It follows that there exists $R_1 \in [R_0,R_0e^J]$ such that
	\[
	\frac{1}{T_0} \int_a^{a+T_0} \frac{1}{R_1^3}\iiint |\chi_{R_1}(y-z)u(t,y)|^2 |\nabla[\chi_{R_1}(x-z) u^{\xi}(t,x)]|^2 dx dydz dt \lesssim \vareps
	\]
	hence
	\[
	\frac{1}{T_0} \int_a^{a+T_0} \frac{1}{R_1^3} \int \|\chi_{R_1}(\cdot-z) u(t)\|^2_{L^2} \|\nabla[\chi_{R_1}(\cdot-z) u^{\xi}(t)]\|^2_{L^2} dz dt \lesssim \vareps.
	\]
	By the change of variable $z=\frac{R_1}{4}(w+\theta)$ with $w\in \Z^3$ and $\theta \in [0,1]^3$, we deduce that there exists $\theta_1 \in [0,1]^3$ such that
	\[
	\frac{1}{T_0} \int_a^{a+T_0} \sum_{w\in \Z^3} \Big\|\chi_{R_1}\Big(\cdot-\frac{R_1}{4}(w+\theta_1)\Big) u(t)\Big\|^2_{L^2} \Big\|\nabla \Big[ \chi_{R_1}\Big(\cdot - \frac{R_1}{4}(w+\theta_1)\Big) u^{\xi}(t)\Big] \Big\|^2_{L^2} dt \lesssim \vareps.
	\]
	Let $\sigma>0$ to be chosen later. By dividing the interval $\left[a+\frac{T_0}{2}, a+\frac{3T_0}{4}\right]$ into $T_0\vareps^{\sigma}$ intervals of length $\vareps^{-\sigma}$, we infer that there exists $t_0 \in \left[a,\frac{T_0}{2},a+\frac{3T_0}{4}\right]$ such that $[t_0-\vareps^{-\sigma},t_0] \subset (a,a+T_0)$  and
	\[
	\int_{t_0-\vareps^{-\sigma}}^{t_0} \sum_{w\in \Z^3} \Big\|\chi_{R_1} \Big(\cdot -\frac{R_1}{4}(w+\theta_1)\Big)u(t)\Big\|^2_{L^2} \Big\|\nabla \Big[ \chi_{R_1}\Big(\cdot-\frac{R_1}{4}(w+\theta_1)\Big) u^{\xi}(t)\Big\|^2_{L^2} dt \lesssim \vareps^{1-\sigma}.
	\]
	This together with the Gagliardo-Nirenberg inequality
	\[
	\|u\|^4_{L^3} \lesssim \|u\|^2_{L^2} \|\nabla u^{\xi}\|^2_{L^2}
	\]
	imply that
	\begin{align} \label{est-t0-1}
	\int_{t_0 -\vareps^{-\sigma}}^{t_0} \sum_{w\in \Z^3} \Big\| \chi_{R_1} \Big( \cdot-\frac{R_1}{4}(w+\theta_1)\Big) u(t)\Big\|^4_{L^3} dt \lesssim \vareps^{1-\sigma}.
	\end{align}
	On the other hand, by H\"older's inequality, Cauchy-Schwarz inequality and Sobolev embedding, we have
	\begin{align}
	\sum_{w\in \Z^3} \Big\| \chi_{R_1} &\Big( \cdot-\frac{R_1}{4}(w+\theta_1) \Big) u(t)\Big\|^2_{L^3} \nonumber  \\
	&\leq \sum_{w\in \Z^3} \Big\|\chi_{R_1}\Big(\cdot-\frac{R_1}{4}(w+\theta_1)\Big) u(t)\Big\|_{L^2} \Big\|\chi_{R_1}\Big(\cdot-\frac{R_1}{4}(w+\theta_1)\Big) u(t)\Big\|_{L^6} \nonumber \\
	&\leq \Big(\sum_{w\in \Z^3} \Big\|\chi_{R_1}\Big(\cdot-\frac{R_1}{4}(w+\theta_1)\Big) u(t)\Big\|^2_{L^2}  \Big)^{1/2} \Big( \sum_{w\in \Z^3} \Big\|\chi_{R_1}\Big(\cdot-\frac{R_1}{4}(w+\theta_1)\Big) u(t)\Big\|^2_{L^6}\Big)^{1/2} \nonumber \\
	&\lesssim \|u(t)\|_{L^2} \|\nabla u(t)\|_{L^2} \lesssim 1. \label{est-t0-2}
	\end{align}
	Combining \eqref{est-t0-1} and \eqref{est-t0-2}, we get from the property of $\chi_{R_1}$ that
	\begin{align*}
	\|u\|^3_{L^3([t_0-\vareps^{-\sigma},t_0] \times \R^3)} &\lesssim \int_{t_0-\vareps^{-\sigma}}^{t_0} \sum_{w\in \Z^3} \Big\| \chi_{R_1} \Big( \cdot-\frac{R_1}{4} (w+\theta_1) \Big) u(t)\Big\|^3_{L^3} dt \\
	&\lesssim \int_{t_0-\vareps^{-\sigma}}^{t_0} \Big(\sum_{w\in \Z^3} \Big\|\chi_{R_1} \Big(\cdot- \frac{R_1}{4}(w+\theta_1) \Big) u(t)\Big\|^4_{L^3} \Big)^{\frac{1}{2}} \\
	&\mathrel{\phantom{\lesssim \int_{t_0-\vareps^{-\sigma}}^{t_0}}} \times \Big( \sum_{w\in \Z^3} \Big\|\chi_{R_1} \Big(\cdot- \frac{R_1}{4}(w+\theta_1) \Big) u(t)\Big\|^2_{L^{3}}\Big)^{\frac{1}{2}} dt  \\
	&\lesssim \Big(\int_{t_0-\vareps^{-\sigma}}^{t_0} \sum_{w\in \Z^3} \Big\| \chi_{R_1} \Big( \cdot- \frac{R_1}{4} (w+\theta_1)\Big) u(t)\Big\|^4_{L^{3}} dt \Big)^{\frac{1}{2}} \\
	&\mathrel{\phantom{\lesssim \int_{t_0-\vareps^{-\sigma}}^{t_0}}} \times \Big(\int_{t_0-\vareps^{-\sigma}}^{t_0} \sum_{w\in \Z^3} \Big\| \chi_{R_1} \Big( \cdot- \frac{R_1}{4} (w+\theta_1)\Big) u(t)\Big\|^2_{L^{3}} dt \Big)^{\frac{1}{2}} \\
	&\lesssim \vareps^{\frac{1-\sigma}{2}} \vareps^{-\frac{\sigma}{2}} = \vareps^{\frac{1}{2} -\sigma} 
	\end{align*}
	which implies that
	\begin{align} \label{est-t0}
	\|u\|_{L^3([t_0-\vareps^{-\sigma},t_0] \times \R^3)} \lesssim \vareps^{\frac{1}{3}\left(\frac{1}{2}-\sigma \right)}.
	\end{align}
	By interpolation, we have
	\begin{align*}
	\|u\|_{L^q([t_0-\vareps^{-\sigma},t_0]\times \R^3)} &\leq \|u\|^{\vartheta}_{L^3([t_0-\vareps^{-\sigma},t_0]\times \R^3)} \|u\|^{1-\vartheta}_{L^{10}([t_0-\vareps^{-\sigma},t_0]\times \R^3)} \\
	&\lesssim \vareps^{\frac{\vartheta}{3}\left(\frac{1}{2}-\sigma\right)} \vareps^{-\frac{\sigma}{10}(1-\vartheta)} \\
	&= \vareps^{\frac{\vartheta}{6} - \sigma \left(\frac{1}{10}+\frac{7\vartheta}{30}\right)},
	\end{align*}
	where
	\[
	\vartheta=\frac{3(4-\alpha)}{7\alpha} \in (0,1).
	\]
	Here we have used the fact that
	\[
	\|u\|_{L^{10}(I\times \R^3)} \lesssim \scal{I}^{\frac{1}{10}}
	\]
	which follows from the local theory. This shows \eqref{scat-crite-check} with
	\[
	\mu=\frac{\vartheta}{6} - \sigma \left(\frac{1}{10}+\frac{7\vartheta}{30}\right) = \frac{4-\alpha}{14\alpha} -\frac{2\sigma}{5\alpha}.
	\]
	By taking $0<\sigma<\frac{4-\alpha}{7}$, we see that \eqref{cond-sigma-mu} is satisfied. The proof is complete.
	\hfill $\Box$
	
	\section{Long time dynamics}
	\label{S6}
	\setcounter{equation}{0}
	In this section, we give the proofs of Theorem \ref{theo-scat-below} and Theorem \ref{theo-scat-at}.
	
	\noindent {\it Proof of Theorem \ref{theo-scat-below}.}
	Thanks to Theorem \ref{theo-scat-crite}, it suffices to show \eqref{est-solu-below}. To see this, we first claim that there exists $\rho=\rho(u_0,Q)>0$ such that
	\begin{align} \label{claim-below}
	\|\nabla u(t)\|_{L^2} \|u(t)\|^{\sigc}_{L^2} \leq (1-\rho) \|\nabla Q\|_{L^2} \|Q\|^{\sigc}_{L^2}
	\end{align}
	for all $t\in (-T_*,T^*)$. Assume \eqref{claim-below} for the moment, let us prove \eqref{est-solu-below}. By the Gagliardo-Nirenberg inequality \eqref{GN-ineq} and \eqref{claim-below}, we have
	\begin{align*}
	\|u(t)\|^{\alpha+2}_{L^{\alpha+2}} \|u(t)\|^{2\sigc}_{L^2} &\leq C_{\opt} \|\nabla u(t)\|^{\frac{3\alpha}{2}}_{L^2} \|u(t)\|^{\frac{4-\alpha}{2} +2\sigc}_{L^2} \\
	&= C_{\opt} \left(\|\nabla u(t)\|_{L^2} \|u(t)\|^{\sigc}_{L^2} \right)^{\frac{3\alpha}{2}} \\
	&\leq C_{\opt} (1-\rho)^{\frac{3\alpha}{2}} \left(\|\nabla Q\|_{L^2} \|Q\|^{\sigc}_{L^2}\right)^{\frac{3\alpha}{2}}
	\end{align*}
	for all $t\in (-T_*,T^*)$. From this, \eqref{poho-iden} and \eqref{sharp-const-GN}, we see that
	\[
	\|u(t)\|^{\alpha+2}_{L^{\alpha+2}} \|u(t)\|^{2\sigc}_{L^2} \leq  \frac{2(\alpha+2)}{3\alpha} (1-\rho)^{\frac{3\alpha}{2}}\|\nabla Q\|_{L^2}^2 \|Q\|^{2\sigc}_{L^2} = (1-\rho)^{\frac{3\alpha}{2}} \|Q\|^{\alpha+2}_{L^{\alpha+2}} \|Q\|^{2\sigc}_{L^2}
	\]
	for all $t \in (-T_*,T^*)$ which proves \eqref{est-solu-below}. 
	
	Now, we prove \eqref{claim-below}. Recall that the initial data is assumed to satisfy \eqref{cond-1-HI} and \eqref{cond-2-HI}. To this end, we multiply both sides of $E(u(t))$ with $[M(u(t))]^{\sigc}$ and use the Gagliardo-Nirenberg inequality \eqref{GN-ineq} together with $V\geq 0$ to have
		\begin{align}
		E(u(t)) [M(u(t))]^{\sigc} & = \left(\frac{1}{2} \|\nabla u(t)\|^2_{L^2} +\frac{1}{2} \int V|u(t)|^2 dx - \frac{1}{\alpha+2} \|u(t)\|^{\alpha+2}_{L^{\alpha+2}} \right) \|u(t)\|^{2\sigc}_{L^2} \nonumber \\
		&\geq \frac{1}{2} \left( \|\nabla u(t)\|_{L^2} \|u(t)\|^{\sigc}_{L^2}\right)^2 - \frac{C_{\opt}}{\alpha+2} \|\nabla u(t)\|^{\frac{3\alpha}{2}}_{L^2} \|u(t)\|^{\frac{4-\alpha}{2}+2\sigc}_{L^2} \nonumber \\
		& = G\left(\|\nabla u(t)\|_{L^2} \|u(t)\|^{\sigc}_{L^2} \right), \label{est-E}
		\end{align}
		where
		\begin{align} \label{defi-G}
		G(\lambda) = \frac{1}{2} \lambda^2 - \frac{C_{\opt}}{\alpha+2} \lambda^{\frac{3\alpha}{2}}.
		\end{align}
		Using \eqref{iden-Q}, we see that
		\[
		G \left( \|\nabla Q\|_{L^2} \|Q\|^{\sigc}_{L^2} \right) = \frac{3\alpha-4}{6\alpha} \left( \|\nabla Q\|_{L^2} \|Q\|^{\sigc}_{L^2}  \right)^2 = E_0(Q) [M(Q)]^{\sigc}.
		\]
		From \eqref{cond-1-HI}, \eqref{est-E}, the conservation of mass and energy, we have
		\[
		G\left(\|\nabla u(t)\|_{L^2} \|u(t)\|^{\sigc}_{L^2} \right) \leq E(u_0) [M(u_0)]^{\sigc} < E_0(Q) [M(Q)]^{\sigc} = G \left( \|\nabla Q\|_{L^2} \|Q\|^{\sigc}_{L^2} \right)
		\]
		for all $t$ in the existence time. By \eqref{cond-2-HI}, the continuity argument implies 
		\begin{align} \label{est-solu-1}
		\|\nabla u(t)\|_{L^2} \|u(t)\|^{\sigc}_{L^2} < \|\nabla Q\|_{L^2} \|Q\|^{\sigc}_{L^2}
		\end{align}
		for all $t\in (-T_*,T^*)$. Next, from \eqref{cond-1-HI}, we take $\vartheta = \vartheta(u_0,Q)>0$ such that
		\begin{align} \label{cond-1-app}
		E(u_0) [M(u_0)]^{\sigc} \leq (1-\vartheta) E_0(Q) [M(Q)]^{\sigc}.
		\end{align}
		Using the fact that
		\[
		E_0(Q)[M(Q)]^{\sigc}=\frac{3\alpha-4}{6\alpha} \left(\|\nabla Q\|_{L^2} \|Q\|^{\sigc}_{L^2} \right)^2 = \frac{3\alpha-4}{4(\alpha+2)} C_{\opt} \left(\|\nabla Q\|_{L^2} \|Q\|^{\sigc}_{L^2} \right)^{\frac{3\alpha}{2}},
		\]
		we infer from \eqref{est-E} and \eqref{cond-1-app} that
		\begin{align} \label{est-H}
		\frac{3\alpha}{3\alpha-4} \left(\frac{\|\nabla u(t)\|_{L^2} \|u(t)\|^{\sigc}_{L^2}}{\|\nabla Q\|_{L^2} \|Q\|^{\sigc}_{L^2}} \right)^2  - \frac{4}{3\alpha-4} \left(\frac{\|\nabla u(t)\|_{L^2} \|u(t)\|^{\sigc}_{L^2}}{\|\nabla Q\|_{L^2} \|Q\|^{\sigc}_{L^2}} \right)^{\frac{3\alpha}{2}} \leq 1-\vartheta.
		\end{align}
		We consider the function $H(\lambda) = \frac{3\alpha}{3\alpha-4} \lambda^2 - \frac{4}{3\alpha-4} \lambda^{\frac{3\alpha}{2}}$ with $0<\lambda<1$ due to \eqref{est-solu-1}. We see that $H$ is strictly increasing in $(0,1)$ with $H(0) = 0$ and $H(1) = 1$. It follows from \eqref{est-H} that there exists $\rho>0$ depending on $\vartheta$ such that $\lambda \leq 1-\rho$ which shows \eqref{claim-below}. The proof is complete.
	\hfill $\Box$ 
	
	We next study the long time dynamics at the ground state threshold given in Theorem \ref{theo-scat-at}. 
	
	\noindent {\it Proof of Theorem \ref{theo-scat-at}.}
	Let us start with the following observation. There is no $f\in H^1$ satisfying
	\begin{align} \label{observation}
	E(f) [M(f)]^{\sigc} = E_0(Q) [M(Q)]^{\sigc}, \quad \|\nabla f\|_{L^2} \|f\|^{\sigc}_{L^2} = \|\nabla Q\|_{L^2} \|Q\|^{\sigc}_{L^2}.
	\end{align}
	In fact, we take $\lambda>0$ such that $\|f\|_{L^2} = \lambda \|Q\|_{L^2}$. It follows that
	\begin{align} \label{obser-f}
	E(f) = \lambda^{-2\sigc} E_0(Q), \quad \|\nabla f\|_{L^2} = \lambda^{-\sigc} \|\nabla Q\|_{L^2}.
	\end{align}
	Using the Gagliardo-Nirenberg inequality \eqref{GN-ineq} and \eqref{poho-iden}, we see that
	\begin{align*}
	\|f\|^{\alpha+2}_{L^{\alpha+2}} \|f\|^{2\sigc}_{L^2} &\leq C_{\opt} \|\nabla f\|^{\frac{3\alpha}{2}}_{L^2} \|f\|^{\frac{4-\alpha}{2} + 2\sigc}_{L^2} \\
	& = \frac{2(\alpha+2)}{3\alpha} \left(\|\nabla Q\|_{L^2} \|Q\|^{\sigc}_{L^2}\right)^{-\frac{3\alpha-4}{2}}  \left(\|\nabla f\|_{L^2}\|f\|^{\sigc}_{L^2}\right)^{\frac{3\alpha}{2}} \\
	& = \frac{2(\alpha+2)}{3\alpha} \left(\|\nabla Q\|_{L^2}\|Q\|_{L^2}^{\sigc}\right)^2.
	\end{align*}
	This implies
	\[
	\|f\|^{\alpha+2}_{L^{\alpha+2}} \leq \frac{2(\alpha+2)}{3\alpha} \lambda^{-2\sigc} \|\nabla Q\|^2_{L^2} = \lambda^{-2\sigc} \|Q\|^{\alpha+2}_{L^{\alpha+2}}.
	\]
	Using \eqref{obser-f}, we infer that
	\begin{align*}
	0\leq \int_{\R^3} V(x)|f(x)|^2 dx = \frac{1}{\alpha+2}\|f\|^{\alpha+2}_{L^{\alpha+2}} - \frac{1}{\alpha+2} \lambda^{-2\sigc} \|Q\|^{\alpha+2}_{L^{\alpha+2}} \leq 0.
	\end{align*}
	This shows that $f \equiv 0$ which is impossible. 
	
	Now, let $u_0\in H^1$ satisfy \eqref{cond-enegy-at} and \eqref{cond-grad-at}. Let $u:(-T_*,T^*)\times \R^3 \rightarrow \C$ be the corresponding solution to the focusing problem \eqref{NLS-V}. By \eqref{est-E}, we have
	\begin{align} \label{proper-G}
	G\left(\|\nabla u(t)\|_{L^2} \|u(t)\|^{\sigc}_{L^2} \right) \leq E(u(t)) [M(u(t))]^{\sigc} = E(u_0) [M(u_0)]^{\sigc} = E_0(Q) [M(Q)]^{\sigc}
	\end{align}
	for all $t\in (-T_*,T^*)$, where $G$ is as in \eqref{defi-G}. It is easy to check that $G$ attains its maximum at 
	\[
	\lambda_0 = \left(\frac{2(\alpha+2)}{3\alpha C_{\opt}}\right)^{\frac{2}{3\alpha-4}} = \|\nabla Q\|_{L^2} \|Q\|^{\sigc}_{L^2}
	\]
	and 
	\[
	G(\lambda_0) = E_0(Q) [M(Q)]^{\sigc}.
	\]
	We claim that 
	\begin{align} \label{claim-at}
	\|\nabla u(t)\|_{L^2} \|u(t)\|^{\sigc}_{L^2} < \|\nabla Q\|_{L^2} \|Q\|^{\sigc}_{L^2}
	\end{align}
	for all $t\in (-T_*,T^*)$. By the conservation of mass and the local theory, we have $T_*=T^*=\infty$, i.e. the solution exists globally in time. We will prove \eqref{claim-at} by contradiction. Suppose that it is not true. Then there exists $t_0 \in (-T_*,T^*)$ such that 
	\[
	\|\nabla u(t_0)\|_{L^2} \|u(t_0)\|^{\sigc}_{L^2} \geq \|\nabla Q\|_{L^2} \|Q\|^{\sigc}_{L^2}.
	\] 
	By continuity using \eqref{cond-enegy-at}, there exists $t_1\in (-T_*,T^*)$ such that 
	\[
	\|\nabla u(t_1)\|_{L^2} \|u(t_1)\|^{\sigc}_{L^2} = \|\nabla Q\|_{L^2} \|Q\|^{\sigc}_{L^2}.
	\]
	Thanks to \eqref{cond-enegy-at} and the conservation of mass and energy, we have
	\[
	E(u(t_1)) [M(u(t_1))]^{\sigc} = E_0(Q) [M(Q)]^{\sigc}
	\]
	which contradicts the observation \eqref{observation}.
	
	By \eqref{claim-at}, we consider two cases.
	
	{\bf Case 1.} If 
	\[
	\sup_{[0,\infty)} \|\nabla u(t)\|_{L^2} \|u(t)\|^{\sigc}_{L^2} < \|\nabla Q\|_{L^2} \|Q\|^{\sigc}_{L^2},
	\] 
	then there exists $\rho>0$ such that
	\[
	\|\nabla u(t)\|_{L^2} \|u(t)\|^{\sigc}_{L^2} \leq (1-\rho) \|\nabla Q\|_{L^2} \|Q\|^{\sigc}_{L^2}
	\]
	for all $t\in [0,\infty)$. By the same argument as in the proof of Theorem \ref{theo-scat-below}, we prove \eqref{scat-cond}. By Theorem \ref{theo-scat-crite}, the corresponding solution scatters in $H^1$ forward in time.
	
	{\bf Case 2.} If 
	\[
	\sup_{t\in [0,\infty)} \|\nabla u(t)\|_{L^2} \|u(t)\|_{L^2}^{\sigc} = \|\nabla Q\|_{L^2} \|Q\|^{\sigc}_{L^2},
	\]
	then there exists $(t_n)_{n\geq 1} \subset [0,\infty)$ such that 
	\[
	\lim_{n\rightarrow \infty} \|\nabla u(t_n)\|_{L^2} \|u(t_n)\|_{L^2}^{\sigc} = \|\nabla Q\|_{L^2} \|Q\|^{\sigc}_{L^2}.
	\]
	By \eqref{cond-enegy-at} and the conservation laws of mass and energy, we have
	\[
	E(u(t_n))[M(u(t_n))]^{\sigc}= E_0(Q) [M(Q)]^{\sigc}.
	\]
	Note that $t_n$ must tend to infinity. Otherwise, there exists $t_0 \in [0,\infty)$ such that up to a subsequence, $t_n \rightarrow t_0$ as $n\rightarrow \infty$. By continuity, we have
	\[
	E(u(t_0))[M(u(t_0))]^{\sigc}= E_0(Q) [M(Q)]^{\sigc}, \quad \|\nabla u(t_0)\|_{L^2} \|u(t_0)\|_{L^2}^{\sigc} = \|\nabla Q\|_{L^2} \|Q\|^{\sigc}_{L^2} 
	\]
	which is impossible due to the observation \eqref{observation}. Now, we take $\lambda>0$ so that $\|u(t_n)\|_{L^2} = \lambda \|Q\|_{L^2}$. Note that $\lambda$ is independent of $n$ due to the conservation of mass. It follows that
	\[
	E(u(t_n)) = \lambda^{-2\sigc} E_0(Q), \quad \lim_{n\rightarrow \infty} \|\nabla u(t_n)\|_{L^2} = \lambda^{-\sigc} \|\nabla Q\|_{L^2}.
	\]
	By the Gagliardo-Nirenberg inequality \eqref{GN-ineq}, we see that
	\begin{align*}
	\|u(t_n)\|^{\alpha+2}_{L^{\alpha+2}} &\leq C_{\opt} \|\nabla u(t_n)\|^{\frac{3\alpha}{2}}_{L^2} \|u(t_n)\|^{\frac{4-\alpha}{2}}_{L^2} \\
	&= \frac{2(\alpha+2)}{3\alpha} \left(\|\nabla Q\|_{L^2} \|Q\|^{\sigc}_{L^2}\right)^{-\frac{3\alpha-4}{2}} \|\nabla u(t_n)\|^{\frac{3\alpha}{2}}_{L^2} \left(\lambda \|Q\|_{L^2}\right)^{\frac{4-\alpha}{2}}
	\end{align*}
	which implies
	\[
	\lim_{n\rightarrow \infty} \|u(t_n)\|^{\alpha+2}_{L^{\alpha+2}} \leq \frac{2(\alpha+2)}{3\alpha} \lambda^{-2\sigc} \|\nabla Q\|^2_{L^2} = \lambda^{-2\sigc} \|Q\|^{\alpha+2}_{L^{\alpha+2}}.
	\]
	Thus, we have
	\[
	\lambda^{-2\sigc} E_0(Q) \leq \lim_{n\rightarrow \infty} E_0(u(t_n)) \leq E(u(t_n)) = \lambda^{-2\sigc} E_0(Q)
	\]
	which implies
	\[
	\lim_{n\rightarrow \infty} E_0(u(t_n)) = \lambda^{-2\sigc} E_0(Q).
	\]
	We also have that
	\begin{align} \label{limi-V}
	\lim_{n\rightarrow \infty} \int_{\R^3} V(x)|u(t_n,x)|^2 dx =0.
	\end{align}
	We have proved that there exists a time sequence $t_n \rightarrow \infty$ such that 
	\[
	\|u(t_n)\|_{L^2} = \lambda \|Q\|_{L^2}, \quad \lim_{n\rightarrow \infty} \|\nabla u(t_n)\|_{L^2} = \lambda^{-\sigc} \|\nabla Q\|_{L^2}, \quad \lim_{n\rightarrow\infty} E_0(u(t_n))= \lambda^{-2\sigc} E_0(Q)
	\]
	for some $\lambda>0$. By the concentration-compactness lemma of Lions \cite{Lions}, there exists a subsequence still denoted by $(u(t_n))_{n\geq 1}$ satisfying one of the following three possibilities: vanishing, dichotomy and compactness. 
	
	The vanishing cannot occur. In fact, suppose that the vanishing occurs. Then it was shown in \cite{Lions} that $u(t_n) \rightarrow 0$ strongly in $L^r$ for any $2<r<6$. This however contradicts to the fact that
	\[
	\lim_{n\rightarrow \infty} \|u(t_n)\|^{\alpha+2}_{L^{\alpha+2}} = \lambda^{-2\sigc} \|Q\|^{\alpha+2}_{L^{\alpha+2}}.
	\]
	
	The dichotomy cannot occur. Indeed, suppose the dichotomy occurs, then there exist $\mu \in (0, \lambda \|Q\|_{L^2})$ and sequences $(f^1_n)_{n\geq 1}, (f^2_n)_{n\geq 1}$ bounded in $H^1$ such that
	\[
	\left\{
	\renewcommand*{\arraystretch}{1.3}
	\begin{array}{l}
	\|u(t_n) - f^1_n - f^2_n\|_{L^r} \rightarrow 0 \text{ as } n\rightarrow \infty \text{ for any } 2 \leq r < 6, \\
	\|f^1_n\|_{L^2} \rightarrow \mu, \quad \|f^2_n\|_{L^2} \rightarrow \lambda \|Q\|_{L^2} - \mu \text{ as } n\rightarrow \infty, \\
	\dist(\supp(f^1_n), \supp(f^2_n)) \rightarrow \infty \text{ as } n \rightarrow \infty, \\
	\liminf_{n\rightarrow \infty} \|\nabla u(t_n)\|^2_{L^2} - \|\nabla f^1_n\|^2_{L^2} - \|\nabla f^2_n\|^2_{L^2} \geq 0.
	\end{array}
	\right.	
	\]
	By the Gagliardo-Nirenberg inequality, we have
	\[
	\|f^1_n\|^{\alpha+2}_{L^{\alpha+2}} \leq C_{\opt} \|\nabla f^1_n\|^{\frac{3\alpha}{2}}_{L^2} \|f^1_n\|^{\frac{4-\alpha}{2}}_{L^2}
	\]
	which implies
	\[
	\lim_{n\rightarrow \infty} \|f^1_n\|^{\alpha+2}_{L^{\alpha+2}} < C_{\opt} \lim_{n\rightarrow \infty} \|\nabla f^1_n\|^{\frac{3\alpha}{2}}_{L^2} \|u(t_n)\|^{\frac{4-\alpha}{2}}_{L^2}.
	\]
	A similar estimate holds for $f^2_n$. It follows that
	\begin{align*}
	\lambda^{-2\sigc} \|Q\|^{\alpha+2}_{L^{\alpha+2}}= \lim_{n\rightarrow \infty} \|u(t_n)\|^{\alpha+2}_{L^{\alpha+2}} &= \lim_{n\rightarrow \infty} \|f^1_n\|^{\alpha+2}_{L^{\alpha+2}} + \|f^2_n\|^{\alpha+2}_{L^{\alpha+2}} \\
	&< C_{\opt} \lim_{n\rightarrow \infty} \left(\|\nabla f^1_n\|^{\frac{3\alpha}{2}}_{L^2} + \|\nabla f^2_n\|^{\frac{3\alpha}{2}}_{L^2} \right) \|u(t_n)\|^{\frac{4-\alpha}{2}}_{L^2} \\
	&\leq C_{\opt} \lim_{n\rightarrow \infty} \left(\|\nabla f^1_n\|_{L^2}^2 +\|\nabla f^2_n\|^2 \right)^{\frac{3\alpha}{4}} \|u(t_n)\|^{\frac{4-\alpha}{2}}_{L^2}\\
	&\leq C_{\opt} \lim_{n\rightarrow \infty} \|\nabla u(t_n)\|^{\frac{3\alpha}{2}}_{L^2} \|u(t_n)\|^{\frac{4-\alpha}{2}}_{L^2} \\
	&= C_{\opt} \left(\lambda^{-\sigc} \|\nabla Q\|_{L^2} \right)^{\frac{3\alpha}{2}} \left( \lambda \|Q\|_{L^2}\right)^{\frac{4-\alpha}{2}} \\
	&= \lambda^{-2\sigc} \|Q\|^{\alpha+2}_{L^{\alpha+2}}
	\end{align*}
	which is a contradiction. 
	
	Therefore, the compactness must occur. By \cite{Lions}, there exist a subsequence still denoted by $(u(t_n))_{n\geq 1}$, a function $f \in H^1$ and a sequence $(y_n)_{n\geq 1} \subset \R^N$ such that $u(t_n,\cdot+y_n) \rightarrow f$ strongly in $L^r$ for any $2\leq r<6$ and weakly in $H^1$. We have
	\[
	\|f\|_{L^2} = \lim_{n\rightarrow \infty} \|u(t_n, \cdot+y_n)\|_{L^2} = \lambda \|Q\|_{L^2}
	\]
	and
	\[
	\|f\|^{\alpha+2}_{L^{\alpha+2}} = \lim_{n\rightarrow \infty} \|u(t_n, \cdot+y_n)\|^{\alpha+2}_{L^{\alpha+2}} = \lambda^{-2\sigc}\|Q\|^{\alpha+2}_{L^{\alpha+2}}
	\]
	and 
	\[
	\|\nabla f\|_{L^2} \leq \liminf_{n\rightarrow \infty} \|\nabla u(t_n, \cdot+y_n)\|_{L^2} = \lambda^{-\sigc} \|\nabla Q\|_{L^2}.
	\]
	On the other hand, by the Gagliardo-Nirenberg inequality, we have
	\[
	\|\nabla f\|^{\frac{3\alpha}{2}}_{L^2} \geq \frac{\|f\|^{\alpha+2}_{L^{\alpha+2}}}{C_{\opt} \|f\|^{\frac{4-\alpha}{2}}_{L^2}} = \frac{ \lambda^{-2\sigc} \|Q\|^{\alpha+2}_{L^{\alpha+2}}}{ C_{\opt} \left( \lambda \|Q\|_{L^2}\right)^{\frac{4-\alpha}{2}}} = \left(\lambda^{-\sigc} \|\nabla Q\|_{L^2}\right)^{\frac{3\alpha}{2}}
	\]
	hence $\|\nabla f\|_{L^2} = \lim_{n\rightarrow \infty} \|\nabla u(t_n,\cdot+y_n)\|_{L^2} = \lambda^{-\sigc}\|\nabla Q\|_{L^2}$. In particular, $u(t_n,\cdot+y_n) \rightarrow f$ strongly in $H^1$. It is easy to see that
	\[
	\frac{\|f\|^{\alpha+2}_{L^{\alpha+2}}}{\|\nabla f\|^{\frac{3\alpha}{2}}_{L^2} \|f\|^{\frac{4-\alpha}{2}}_{L^2}} = \frac{\|Q\|^{\alpha+2}_{L^{\alpha+2}}}{\|\nabla Q\|^{\frac{3\alpha}{2}}_{L^2} \|Q\|^{\frac{4-\alpha}{2}}_{L^2}} = C_{\opt}.
	\]
	This shows that $f$ is an optimizer for the Gagliardo-Nirenberg inequality \eqref{GN-ineq}. By the characterization of ground state (see e.g. \cite{Lions}) with the fact $\|f\|_{L^2} = \lambda \|Q\|_{L^2}$, we have $f(x) = e^{i\theta} \lambda Q(x - x_0)$ for some $\theta \in \R$, $\mu>0$ and $x_0 \in \R^N$. Redefining the variable, we prove that there exists a sequence $(y_n)_{n\geq 1} \subset \R^N$ such that
	\[
	u(t_n, \cdot +y_n) \rightarrow e^{i\theta} \lambda Q \text{ strongly in } H^1
	\]
	as $n\rightarrow \infty$. Finally, using \eqref{limi-V}, we infer that $|y_n| \rightarrow \infty$ as $n\rightarrow \infty$. In fact, suppose that $y_n \rightarrow y_0 \in \R^N$ as $n\rightarrow \infty$. We have from \eqref{limi-V} that
	\begin{align*}
	0 = \lim_{n\rightarrow\infty} \int_{\R^N} V(x) |u(t_n,x)|^2 dx &=\lim_{n\rightarrow \infty} \int_{\R^N} V(x+y_n) |u(t_n, x+y_n)|^2 dx \\
	&= \lambda^2 \int_{\R^N} V(x+y_0) |Q(x)|^2 dx 
	\end{align*}
	which is a contradiction. The proof is complete.
	\hfill $\Box$
	
	\section{Remark on long time dynamics for NLS with repulsive inverse-power potentials}
	\label{S7}
	\setcounter{equation}{0}
	
	Let us now consider the NLS with repulsive inverse-power potentials in three dimensions, namely 
	\begin{align} \label{NLS-inverse}
	\left\{ 
	\begin{array}{rcl}
	i\partial_t u +\Delta u - c|x|^{-\sigma} u &=& \pm |u|^\alpha u, \quad (t,x) \in \R \times \R^3, \\
	u(0,x)&=& u_0(x),
	\end{array}
	\right.
	\end{align}
	where $c>0$, $0<\sigma<2$ and $\alpha>0$. In the case $\sigma=1$, \eqref{NLS-inverse} becomes the well-known NLS with Coulomb potential. The local well-posedness, global well-posedness and finite time blow-up of $H^1$-solutions for \eqref{NLS-inverse} have been studied in \cite{MZZ, Dinh-repul}. It is known that $H^1$-solutions satisfy the conservation of mass and energy
	\begin{align*}
	M(u(t)) &= \int |u(t,x)|^2 dx = M(u_0), \\
	E(u(t)) &= \frac{1}{2} \int |\nabla u(t,x)|^2 dx +\frac{c}{2} \int |x|^{-\sigma} |u(t,x)|^2 dx \pm \frac{1}{\alpha+2} \int |u(t,x)|^{\alpha+2} dx = E(u_0).
	\end{align*}
	In the defocusing, thanks to global in time Strichartz estimates proved by Mizutani \cite{Mizutani}, the energy scattering for \eqref{NLS-inverse} was shown in \cite{MZZ, Dinh-repul}. In the focusing case, we can apply the argument presented in the paper (especially in the radial case) to show long time dynamics for \eqref{NLS-inverse}. More precisely, we have the following results. 
	
	\begin{theorem} [Scattering below the ground state threshold]\label{theo-NLS-inverse-below}
		Let $\frac{4}{3}<\alpha<4$, $c>0$ and $0<\sigma<2$. Let $u_0 \in H^1$ satisfy \eqref{cond-1-HI} and \eqref{cond-2-HI}. Then the corresponding solution to the focusing problem \eqref{NLS-inverse} exists globally in time and scatters in $H^1$ in both directions.
	\end{theorem}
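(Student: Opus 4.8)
The plan is to follow, now with the repulsive potential $V(x)=c|x|^{-\sigma}$ ($c>0$, $0<\sigma<2$), the three-part scheme behind Theorem~\ref{theo-scat-below}: (i) a local well-posedness theory together with the Dodson--Murphy scattering criterion of Lemma~\ref{lem-scat-crite}; (ii) the variational/coercivity analysis of Section~\ref{S4}, which produces the uniform bound \eqref{est-solu-below} out of the hypotheses \eqref{cond-1-HI}--\eqref{cond-2-HI}; and (iii) the interaction Morawetz estimate of Proposition~\ref{prop-inter-mora-est-focus}, whose output is converted into the spacetime smallness \eqref{scat-cond-a} by the orthogonality argument of Section~\ref{S5}. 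Since $c|x|^{-\sigma}\notin\Kc\cap L^{3/2}(\R^3)$, the only parts not literally covered by the earlier sections are the dispersive/Strichartz input and the potential terms in the Morawetz identities; everything else transfers verbatim.

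First I would invoke the global-in-time Strichartz and dispersive estimates for $\Hc=-\Delta+c|x|^{-\sigma}$ established by Mizutani~\cite{Mizutani}, exactly as in the defocusing treatments of~\cite{MZZ,Dinh-repul} (for the long-range range $\sigma\le1$ the dispersive decay may come with a harmless $\vareps$-loss, which the criterion in Lemma~\ref{lem-scat-crite} absorbs). With these estimates the local well-posedness, the small-data scattering of Lemma~\ref{lem-small-scat}, and the scattering criterion of Lemma~\ref{lem-scat-crite} hold for \eqref{NLS-inverse} with the same proofs; one should relabel the Morawetz-scheme exponents $\sigma,\mu$ appearing there to avoid a clash with the present potential exponent. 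Since $V\ge0$, Hardy's inequality gives $\|\sqrt{\Hc}\,f\|_{L^2}\sim\|\nabla f\|_{L^2}$ and $\|f\|_{H^1_V}\sim\|f\|_{H^1}$, so the equivalence of Sobolev norms used throughout is available. Finally, the refined Gagliardo--Nirenberg inequality (Lemma~\ref{lem-GN}) and the coercivity Lemmas~\ref{lem-coer-1}--\ref{lem-coer-2} involve only the ground state $Q$ of \eqref{ell-equ} and the sign $V\ge0$, hence are unchanged, and the step in the proof of Theorem~\ref{theo-scat-below} that deduces \eqref{est-solu-below} (equivalently \eqref{scat-cond}) from \eqref{cond-1-HI}--\eqref{cond-2-HI} carries over word for word.

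It then remains to reproduce Lemma~\ref{lem-mora-est-focus} and Proposition~\ref{prop-inter-mora-est-focus}. The properties of $\phi_R,\psi_R$ in Lemma~\ref{lem-proper-psi} and every term not involving $V$ are untouched, so the new point is the potential term. Here I would use that $V=c|x|^{-\sigma}$ is repulsive: $x\cdot\nabla V=-c\sigma|x|^{-\sigma}\le0$ and, in the interaction version, $(x-y)\cdot\nabla V(x)=-c\sigma|x|^{-\sigma}+c\sigma|x|^{-\sigma-2}(x\cdot y)$. Thus the leading potential contributions to $\tfrac{d}{dt}\Mca_R(t)$ and $\tfrac{d}{dt}\Mca^{\otimes2}_R(t)$ carry the favourable sign and may be kept as gains or simply discarded, while the relevant error quantities are governed by $\int|x|^{-\sigma}|u(t,x)|^2\,dx\lesssim\|u(t)\|_{\dot H^{\sigma/2}}^2\lesssim\|u(t)\|_{H^1}^2$, finite for all $\sigma<2$; in the single-particle identity the weight $\psi_R(x)|x|\lesssim|x|$ already turns the apparent $|x|^{-\sigma-1}$ singularity of $\partial_rV$ into the harmless $|x|^{-\sigma}$. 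With these substitutions one obtains the analogue of \eqref{inter-mora-est}; the cube-decomposition/orthogonality argument of Section~\ref{S5} (after~\cite{DM-nonrad,XZZ}) then upgrades it to \eqref{scat-cond-a}, and Lemma~\ref{lem-scat-crite} yields scattering forward in time, the backward direction being symmetric.

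The main obstacle is the potential term in the \emph{interaction} Morawetz identity \eqref{term-5} when $\sigma\in[1,2)$: the singular factor $|x|^{-\sigma-1}$ produced by $\nabla V(x)$ is then worse than $|x|^{-2}$ and is not matched by the weight $\psi_R(x-y)(x-y)$ (whose vanishing is centred at $y$, not at the origin), so the clean reduction to the single-particle estimate carried out in the excerpt---which requires $\partial_rV\,|u(t)|^2\in L^1$---is unavailable. I would handle this after integrating in time, exploiting the sign of the repulsive term together with a Kato-type local-smoothing bound for $e^{-it\Hc}$ to control the spacetime integral $\int_I\!\int|x|^{-\sigma-1}|u|^2\,dx\,dt$ (which is finite even though the pointwise-in-time integral need not be), along the lines of the Morawetz analysis in~\cite{Dinh-repul}.
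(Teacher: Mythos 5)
Your proposal is correct and follows essentially the route the paper itself intends: Section~7 only sketches the argument, asserting that the machinery of Theorem~\ref{theo-scat-below} (coercivity, interaction Morawetz, the criterion of Lemma~\ref{lem-scat-crite}) applies verbatim once the dispersive/Strichartz input is replaced by Goldberg's three-dimensional dispersive estimate and Mizutani's global Strichartz estimates, exactly the substitution you make. Your additional care with the singular factor $|x|^{-\sigma-1}$ in the potential term of the interaction Morawetz identity for $\sigma\in[1,2)$ (handled by the repulsive sign and a time-integrated Morawetz/local-smoothing bound in the spirit of \cite{Dinh-repul}) addresses a technical point the paper leaves implicit, and it slots consistently into the averaging over $t$, $R$ performed in Proposition~\ref{prop-inter-mora-est-focus}.
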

	
	\begin{theorem} [Scattering at the ground state threshold] \label{theo-NLS-inverse-at}
		Let $\frac{4}{3}<\alpha<4$, $c>0$ and $0<\sigma<2$. Let $u_0 \in H^1$ satisfy
		\begin{align*} 
		E(u_0) [M(u_0)]^{\sigc} &= E_0(Q) [M(Q)]^{\sigc}, \\
		\|\nabla u_0\|_{L^2} \|u_0\|^{\sigc}_{L^2} &< \|\nabla Q\|_{L^2} \|Q\|^{\sigc}_{L^2}.
		\end{align*}
		Then the corresponding solution to the focusing problem \eqref{NLS-inverse} exists globally in time. Moreover, the solution either scatters in $H^1$ forward in time, or there exist a time sequence $t_n\rightarrow \infty$ and a sequence $(y_n)_{n\geq 1} \subset \R^3$ satisfying $|y_n| \rightarrow \infty$ such that
		\[
		u(t_n, \cdot+y_n) \rightarrow e^{i\theta} \lambda Q \quad \text{strongly in } H^1 
		\]
		for some $\theta \in \R$ and $\lambda:= \frac{\|u_0\|_{L^2}}{\|Q\|_{L^2}}$ as $n\rightarrow \infty$.
	\end{theorem}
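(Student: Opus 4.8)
The plan is to transcribe the proof of Theorem~\ref{theo-scat-at}, with the Kato potential replaced everywhere by $V(x)=c|x|^{-\sigma}$, $c>0$, $0<\sigma<2$, and to check that each ingredient survives this substitution. The only properties of $V$ that were genuinely used in the proof of Theorem~\ref{theo-scat-at} are: $V$ is radially symmetric, $V\ge 0$, $V>0$ almost everywhere, $x\cdot\nabla V\le 0$ (equivalently $\partial_r V\le 0$), and the linear group $e^{-it\Hc}$ enjoys dispersive and Strichartz estimates together with a local well-posedness and small-data scattering theory. For $V=c|x|^{-\sigma}$ the first four are immediate, since $\partial_r V=-c\sigma|x|^{-\sigma-1}\le 0$; the linear estimates and the local theory are furnished by the global-in-time Strichartz estimates of Mizutani~\cite{Mizutani} and by \cite{MZZ,Dinh-repul}. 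With these in hand, the radial scattering criterion used in the proof of Theorem~\ref{theo-scat-crite}---namely Lemma~\ref{lem-scat-crite} combined with the interaction Morawetz estimate of Proposition~\ref{prop-inter-mora-est-focus}---goes through, because in the Morawetz and interaction Morawetz identities the potential contribution is (up to harmless localization) the nonnegative quantity $-R\int\partial_r V\,|u(t)|^2\,dx$, which is moved to the favorable side of the inequality.

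Granting this, I would run the argument of the proof of Theorem~\ref{theo-scat-at} line by line. First, record the nonexistence observation: there is no $f\in H^1$ with $E(f)[M(f)]^{\sigc}=E_0(Q)[M(Q)]^{\sigc}$ and $\|\nabla f\|_{L^2}\|f\|^{\sigc}_{L^2}=\|\nabla Q\|_{L^2}\|Q\|^{\sigc}_{L^2}$; rescaling so that $\|f\|_{L^2}=\lambda\|Q\|_{L^2}$ and using \eqref{GN-ineq} and \eqref{poho-iden} forces $c\int|x|^{-\sigma}|f|^2\,dx=0$, hence $f\equiv 0$, a contradiction. Combining the energy-equality hypothesis of the theorem, the coercivity function $G$ from \eqref{defi-G}, a continuity argument and this observation yields the a priori bound $\|\nabla u(t)\|_{L^2}\|u(t)\|^{\sigc}_{L^2}<\|\nabla Q\|_{L^2}\|Q\|^{\sigc}_{L^2}$ for all $t$, and in particular global existence.

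Next, split into two cases. If $\sup_{t\ge 0}\|\nabla u(t)\|_{L^2}\|u(t)\|^{\sigc}_{L^2}<\|\nabla Q\|_{L^2}\|Q\|^{\sigc}_{L^2}$, the gap provides a $\rho>0$ such that, arguing as in the proof of Theorem~\ref{theo-scat-below}, the hypothesis \eqref{scat-cond} of the scattering criterion holds, so the solution scatters forward in time. Otherwise there is a sequence $t_n\to\infty$ (it must escape to infinity, since otherwise continuity would contradict the observation) along which $\|u(t_n)\|_{L^2}=\lambda\|Q\|_{L^2}$ with $\lambda=\|u_0\|_{L^2}/\|Q\|_{L^2}$, $\|\nabla u(t_n)\|_{L^2}\to\lambda^{-\sigc}\|\nabla Q\|_{L^2}$, $E_0(u(t_n))\to\lambda^{-2\sigc}E_0(Q)$, and crucially $\int V|u(t_n)|^2\,dx\to 0$. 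Applying Lions' concentration-compactness lemma \cite{Lions} to $(u(t_n))_{n\ge 1}$, I would exclude vanishing (it contradicts $\|u(t_n)\|^{\alpha+2}_{L^{\alpha+2}}\to\lambda^{-2\sigc}\|Q\|^{\alpha+2}_{L^{\alpha+2}}\ne 0$) and dichotomy (the strict inequality in \eqref{GN-ineq} leads to a contradiction), leaving compactness: $u(t_n,\cdot+y_n)\to f$ strongly in $L^r$ for $2\le r<6$ and weakly in $H^1$. A lower-semicontinuity estimate together with \eqref{GN-ineq} upgrades this to strong convergence in $H^1$ and identifies $f$, up to phase and translation, as $\lambda Q$; redefining $y_n$ gives $u(t_n,\cdot+y_n)\to e^{i\theta}\lambda Q$ in $H^1$. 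Finally $|y_n|\to\infty$: if $y_n\to y_0$, then since $V\ge 0$, Fatou's lemma gives $\liminf_n\int V|u(t_n)|^2\,dx=\liminf_n\int V(x+y_n)|u(t_n,x+y_n)|^2\,dx\ge\lambda^2 c\int|x+y_0|^{-\sigma}|Q(x)|^2\,dx>0$, contradicting $\int V|u(t_n)|^2\,dx\to 0$.

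The main obstacle is the content of the first paragraph: verifying that the scattering machinery of Sections~\ref{S3}--\ref{S5}, built under the Kato hypotheses \eqref{cond-1-V}--\eqref{cond-2-V}, extends to $V=c|x|^{-\sigma}\notin L^{3/2}(\R^3)$. Concretely this means using Mizutani's global-in-time Strichartz estimates in place of those of \cite{Hong}, re-examining the nonlinear estimates behind Lemma~\ref{lem-scat-crite}, and checking that the potential terms in the Morawetz and interaction Morawetz identities remain well-defined and favorably signed---here $\partial_r V\le 0$ and Hardy/Sobolev-type bounds (controlling $|x|^{-\sigma}|u(t)|^2$ near the origin by the $|x|^{-2}$-weighted Hardy inequality) do the job---so that Proposition~\ref{prop-inter-mora-est-focus}, and hence the scattering criterion, carry over. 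Once this is settled, the remainder is a direct copy of the proofs of Theorem~\ref{theo-scat-below} and Theorem~\ref{theo-scat-at}.
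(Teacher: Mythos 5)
Your proposal is correct and takes essentially the same route as the paper, which proves Theorem \ref{theo-NLS-inverse-at} only by remarking that the machinery of Sections \ref{S3}--\ref{S6} (scattering criterion, radial Morawetz/interaction Morawetz with the favorably signed potential term coming from $\partial_r V\le 0$, and the threshold analysis of Theorem \ref{theo-scat-at}) carries over to $V=c|x|^{-\sigma}$ once the linear theory is supplied by the dispersive estimate of \cite{Goldberg} and the Strichartz estimates of \cite{Mizutani} together with the local theory of \cite{MZZ,Dinh-repul}. Your transcription, including the simplification that $V>0$ a.e. forces $f\equiv 0$ in the threshold nonexistence observation and the Hardy-type control of the potential terms, matches the paper's intended argument at (at least) the same level of detail.
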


	Note that dispersive estimates for \eqref{NLS-inverse} was proved by Goldberg \cite{Goldberg} in three dimensions, however, dispersive estimates for dimensions $N\geq 4$ are still unknown.

	\section*{Acknowledgement}
	This work was supported in part by the Labex CEMPI (ANR-11-LABX-0007-01). The author would like to express his deep gratitude to his wife - Uyen Cong for her encouragement and support. He would like to thank Takahisa Inui for pointing out Remark \ref{rem-zero-poten}. 
	
	\appendix
	
	\section*{Appendix}
	
	In this appendix, we will show Remark $\ref{rem-exam-V}$. Let $V$ be as in \eqref{exam-V}. We first compute
	\begin{align*}
	\|V\|_{L^q}  &= |c| \||x|^{-\sigma}e^{-a|x|}\|_{L^q}  \\
	&= |c| \left( \int_{\R^3} |x|^{-q\sigma} e^{-aq|x|} dx\right)^{\frac{1}{q}} \\
	&= |c| \left( 4\pi \int_0^\infty r^{2-q\sigma} e^{-aqr} dr \right)^{\frac{1}{q}} \\
	&= |c| \left[4\pi (aq)^{q\sigma-3} \Gamma(3-q\sigma)\right]^{\frac{1}{q}}
	\end{align*}
	which proves \eqref{norm-Lq-V}.
	
	We now compute 
	\[
	\|V\|_{\mathcal{K}} = \sup_{x\in \R^3} \int_{\R^3} \frac{|V(y)|}{|x-y|} dy.
	\]
	Consider
	\[
	\int \frac{|V(y)|}{|x-y|} dy = |c| \int \frac{e^{-a|y|}}{|y|^\sigma |x-y|} dy.
	\]
	In the case $x=0$, we have
	\[
	\int \frac{e^{-a|y|}}{|y|^{1+\sigma}} dy = 4\pi \int_0^\infty e^{-ar} r^{1-\sigma} dr = 4\pi a^{\sigma -2} \Gamma(2-\sigma).
	\]
	In the case $x \ne 0$, we write
	\begin{align*}
	\int \frac{e^{-a|y|}}{|y|^\sigma |x-y|} dy &= \int_0^\infty \int_{\Sb^2} \frac{e^{-ar}}{r^\sigma |x-r\theta|} r^2 dr d\theta \\
	&=\int_0^\infty e^{-ar} r^{1-\sigma} I(x,r) dr,
	\end{align*}
	where $r = |y|$ and
	\[
	I(x,r) = \int_{\Sb^2} \frac{1}{\left|\frac{x}{r} -\theta\right|} d\theta.
	\]
	Take $A \in O(3)$ such that $Ae_1 = \frac{x}{|x|}$ with $e_1=(1,0,0)$, we see that
	\[
	I(x,r) = \int_{\Sb^2} \frac{1}{\left| \frac{|x|}{r} Ae_1 - \theta \right|} d\theta = \int_{\Sb^2} \frac{1}{\left| \frac{|x|}{r} e_1 - \theta \right|} d\theta.
	\]
	By change of variables, we arrive
	\begin{align*}
	I(x,r) &= \int_{-1}^1 \int_{\sqrt{1-s^2} \Sb^1} \frac{d\eta}{\sqrt{\left(\frac{|x|}{r}-s \right)^2 +|\eta|^2}} \frac{ds}{\sqrt{1-s^2}} \\
	&= \int_{-1}^1 \int_{\Sb^1} \frac{\sqrt{1-s^2} d\zeta}{\sqrt{\left(\frac{|x|}{r}-s \right)^2 +1-s^2}} \frac{ds}{\sqrt{1-s^2}} \\
	&=|\Sb^1| \int_{-1}^1 \frac{ds}{\sqrt{\left(\frac{|x|}{r}-s \right)^2 +1-s^2}} \\
	&= 2\pi \frac{r}{|x|} \left( \frac{|x|}{r} +1 - \left| \frac{|x|}{r}-1 \right|\right) \\
	&= \left\{
	\begin{array}{cl}
	4\pi &\text{if } |x| \leq r, \\
	4\pi \frac{r}{|x|} &\text{if } |x|\geq r.
	\end{array}
	\right.
	\end{align*}
	It follows that
	\begin{align*}
	\int \frac{e^{-a|y|}}{|y|^\sigma |x-y|} dy &= \frac{4\pi}{|x|} \int_0^{|x|} e^{-ar} r^{2-\sigma} dr + 4\pi \int_{|x|}^\infty e^{-ar} r^{1-\sigma} dr \\
	&= 4\pi a^{2-\sigma} \Gamma(2-\sigma) + 4\pi \left( \frac{1}{|x|}\int_0^{|x|} e^{-ar} r^{2-\sigma} dr - \int_0^{|x|} e^{-ar} r^{1-\sigma} dr\right).
	\end{align*}
	Consider 
	\[
	f(\lambda) = \frac{1}{\lambda} \int_0^\lambda e^{-ar} r^{2-\sigma} dr - \int_0^\lambda e^{-ar} r^{1-\sigma} dr, \quad \lambda>0.
	\]
	We see that if $0<\sigma<2$, then
	\[
	\lim_{\lambda \rightarrow 0} f(\lambda)= 0.
	\]
	Moreover,
	\[
	f'(\lambda) =- \frac{1}{\lambda^2} \int_0^\lambda e^{-ar} r^{2-\sigma} dr <0, \quad \forall \lambda>0.
	\]
	This shows that $f$ is a strictly decreasing function, hence $f(\lambda) <0$ for all $\lambda>0$. Thus for $x\ne 0$,
	\[
	\int \frac{e^{-a|y|}}{|y|^\sigma |x-y|} dy < 4\pi a^{2-\sigma} \Gamma(2-\sigma).
	\]
	We conclude that
	\[
	\|V\|_{\mathcal{K}} = 4\pi |c| a^{2-\sigma} \Gamma(2-\sigma)
	\]
	which proves \eqref{norm-K-V}.
	\hfill $\Box$
	

	\begin{bibdiv}
		\begin{biblist}
		
		\bib{AN}{article}{
			author={Akahori, T.},
			author={Nawa, H.},
			title={Blowup and scattering problems for the nonlinear Schr\"{o}dinger
				equations},
			journal={Kyoto J. Math.},
			volume={53},
			date={2013},
			number={3},
			pages={629--672},
			issn={2156-2261},
		}
		\bib{BV}{article}{
			author={Banica, V.},
			author={Visciglia, N.},
			title={Scattering for NLS with a delta potential},
			journal={J. Differential Equations},
			volume={260},
			date={2016},
			number={5},
			pages={4410--4439},
			issn={0022-0396},
		}		
		
		\bib{Carles}{article}{
			author={Carles, R.},
			title={On semi-classical limit of nonlinear quantum scattering},
			language={English, with English and French summaries},
			journal={Ann. Sci. \'{E}c. Norm. Sup\'{e}r. (4)},
			volume={49},
			date={2016},
			number={3},
			pages={711--756},
			issn={0012-9593},
		}		
		
		\bib{Cazenave}{book}{
			author={Cazenave, T.},
			title={Semilinear Schr\"{o}dinger equations},
			series={Courant Lecture Notes in Mathematics},
			volume={10},
			publisher={New York University, Courant Institute of Mathematical
				Sciences, New York; American Mathematical Society, Providence, RI},
			date={2003},
			pages={xiv+323},
			isbn={0-8218-3399-5},
		}		
		
		\bib{CFX}{article}{
			author={Cazenave, T.},
			author={Fang, D.},
			author={Xie, J.},
			title={Scattering for the focusing energy-subcritical nonlinear
				Schr\"{o}dinger equation},
			journal={Sci. China Math.},
			volume={54},
			date={2011},
			number={10},
			pages={2037--2062},
			issn={1674-7283},
		}
		
		\bib{CKSTT}{article}{
			author={Colliander, J.},
			author={Keel, M.},
			author={Staffilani, G.},
			author={Takaoka, H.},
			author={Tao, T.},
			title={Global well-posedness and scattering for the energy-critical
				nonlinear Schr\"{o}dinger equation in $\Bbb R^3$},
			journal={Ann. of Math. (2)},
			volume={167},
			date={2008},
			number={3},
			pages={767--865},
			issn={0003-486X},
		}
		
		\bib{Dinh-repul}{article}{
			author={Dinh, V. D.},
			title={On nonlinear Schr\"odinger equations with repulsive inverse-power potentials},
			journal={Preprint},
			eprint={https://arxiv.org/abs/1812.08405},
		}		
		
		\bib{Dinh-INLS-poten}{article}{
			author={Dinh , V. D.},
			title={Global dynamics for a class of inhomogeneous nonlinear Schr\"odinger equations with potential},
			journal={Preprint},
			eprint={https://arxiv.org/abs/1909.12836},
		}		
		
		\bib{Dinh-DCDS}{article}{
			author={Dinh, V.  D.},
			title={A unified approach for energy scattering for focusing nonlinear Schr\"odinger equations},
			journal={Discrete Contin. Dyn. Syst.},
			volume={40},
			date={2020},
			number={11},
			pages={6441--6471},
		}

		\bib{DM}{article}{
			author={Dodson, B.},
			author={Murphy , J.},
			title={A new proof of scattering below the ground state for the 3D radial
				focusing cubic NLS},
			journal={Proc. Amer. Math. Soc.},
			volume={145},
			date={2017},
			number={11},
			pages={4859--4867},
			issn={0002-9939},
		}
		
		\bib{DM-nonrad}{article}{
			author={Dodson, B.},
			author={Murphy, J.},
			title={A new proof of scattering below the ground state for the
				non-radial focusing NLS},
			journal={Math. Res. Lett.},
			volume={25},
			date={2018},
			number={6},
			pages={1805--1825},
			issn={1073-2780},
		}		
		
		\bib{DHR}{article}{
			author={Duyckaerts, T.},
			author={Holmer, J.},
			author={Roudenko, S.},
			title={Scattering for the non-radial 3D cubic nonlinear Schr\"{o}dinger
				equation},
			journal={Math. Res. Lett.},
			volume={15},
			date={2008},
			number={6},
			pages={1233--1250},
			issn={1073-2780},
		}	
		
		\bib{DR}{article}{
			author={Duyckaerts, T.},
			author={Roudenko, S.},
			title={Threshold solutions for the focusing 3D cubic Schr\"{o}dinger
				equation},
			journal={Rev. Mat. Iberoam.},
			volume={26},
			date={2010},
			number={1},
			pages={1--56},
			issn={0213-2230},
		}	
		
		\bib{FV}{article}{
			author={Forcella, L.},
			author={Visciglia, N.},
			title={Double scattering channels for 1D NLS in the energy space and its
				generalization to higher dimensions},
			journal={J. Differential Equations},
			volume={264},
			date={2018},
			number={2},
			pages={929--958},
			issn={0022-0396},
		}	
		
		\bib{GV}{article}{
			author={Ginibre, J.},
			author={Velo, G.},
			title={Scattering theory in the energy space for a class of nonlinear
				Schr\"{o}dinger equations},
			journal={J. Math. Pures Appl. (9)},
			volume={64},
			date={1985},
			number={4},
			pages={363--401},
			issn={0021-7824},
		}	
		
		\bib{Goldberg}{article}{
			author={Goldberg, M.},
			title={Dispersive bounds for the three-dimensional Schr\"{o}dinger equation
				with almost critical potentials},
			journal={Geom. Funct. Anal.},
			volume={16},
			date={2006},
			number={3},
			pages={517--536},
			issn={1016-443X},
		}	
		
		\bib{Guevara}{article}{
			author={Guevara, C. D.},
			title={Global behavior of finite energy solutions to the $d$-dimensional
				focusing nonlinear Schr\"{o}dinger equation},
			journal={Appl. Math. Res. Express. AMRX},
			date={2014},
			number={2},
			pages={177--243},
			issn={1687-1200},
		}
		
		\bib{HI}{article}{
			author={Hamano, M.},
			author={Ikeda, M.},
			title={Global dynamics below the ground state for the focusing
				Schr\"{o}dinger equation with a potential},
			journal={J. Evol. Equ.},
			volume={20},
			date={2020},
			number={3},
			pages={1131--1172},
			issn={1424-3199},
		}		
		
		\bib{Hong}{article}{
			author={Hong, Y.},
			title={Scattering for a nonlinear Schr\"{o}dinger equation with a potential},
			journal={Commun. Pure Appl. Anal.},
			volume={15},
			date={2016},
			number={5},
			pages={1571--1601},
			issn={1534-0392},
		}
		
		\bib{HR}{article}{
			author={Holmer, J.},
			author={Roudenko, S.},
			title={A sharp condition for scattering of the radial 3D cubic nonlinear
				Schr\"{o}dinger equation},
			journal={Comm. Math. Phys.},
			volume={282},
			date={2008},
			number={2},
			pages={435--467},
			issn={0010-3616},
		}		
		
		\bib{KT}{article}{
			author={Keel, M.},
			author={Tao, T.},
			title={Endpoint Strichartz estimates},
			journal={Amer. J. Math.},
			volume={120},
			date={1998},
			number={5},
			pages={955--980},
			issn={0002-9327},
		}	
		
		\bib{KM}{article}{
			author={Kenig, C. E.},
			author={Merle, F.},
			title={Global well-posedness, scattering and blow-up for the
				energy-critical, focusing, non-linear Schr\"{o}dinger equation in the radial
				case},
			journal={Invent. Math.},
			volume={166},
			date={2006},
			number={3},
			pages={645--675},
			issn={0020-9910},
		}
		
		\bib{KMVZ}{article}{
			author={Killip, R.},
			author={Murphy, J.},
			author={Visan, M.},
			author={Zheng, J.},
			title={The focusing cubic NLS with inverse-square potential in three
				space dimensions},
			journal={Differential Integral Equations},
			volume={30},
			date={2017},
			number={3-4},
			pages={161--206},
			issn={0893-4983},
		}
		
		\bib{Lafontaine}{article}{
			author={Lafontaine, D.},
			title={Scattering for NLS with a potential on the line},
			journal={Asymptot. Anal.},
			volume={100},
			date={2016},
			number={1-2},
			pages={21--39},
			issn={0921-7134},
		}
		
		\bib{Lions}{article}{
			author={Lions, P.-L.},
			title={The concentration-compactness principle in the calculus of
				variations. The locally compact case. I},
			language={English, with French summary},
			journal={Ann. Inst. H. Poincar\'{e} Anal. Non Lin\'{e}aire},
			volume={1},
			date={1984},
			number={2},
			pages={109--145},
			issn={0294-1449},
		}
		
		\bib{LMM}{article}{
			author={Lu, J.},
			author={Miao, C.},
			author={Murphy, J.},
			title={Scattering in $H^1$ for the intercritical NLS with an
				inverse-square potential},
			journal={J. Differential Equations},
			volume={264},
			date={2018},
			number={5},
			pages={3174--3211},
			issn={0022-0396},
		}
		
		\bib{MZZ}{article}{
			author={Miao, C.},
			author={Zhang, J.},
			author={Zheng, J.},
			title={Nonlinear Schr\"odinger equation with coulomb potential},
			journal={Preprint},
			eprint={https://arxiv.org/abs/1809.06685},
		}	
		
		\bib{Mizutani}{article}{
			author={Mizutani, H.},
			title={Strichartz estimates for Schr\"{o}dinger equations with slowly
				decaying potentials},
			journal={J. Funct. Anal.},
			volume={279},
			date={2020},
			number={12},
			pages={108789},
			issn={0022-1236},
		}		
		
		\bib{Mizutani-PAMS}{article}{
			author={Mizutani , H.},
			title={Wave operators on Sobolev spaces},
			journal={Proc. Amer. Math. Soc.},
			volume={148},
			date={2020},
			number={4},
			pages={1645--1652},
			issn={0002-9939},
		}
		
		\bib{TVZ}{article}{
			author={Tao, T.},
			author={Visan, M.},
			author={Zhang, X.},
			title={The nonlinear Schr\"{o}dinger equation with combined power-type
				nonlinearities},
			journal={Comm. Partial Differential Equations},
			volume={32},
			date={2007},
			number={7-9},
			pages={1281--1343},
			issn={0360-5302},
		}
		
		\bib{XZZ}{article}{
			author={Xu, C.},
			author={Zhao, T.},
			author={Zheng, J.},
			title={Scattering for 3D cubic focusing NLS on the domain outside a convex obstacle revisited},
			journal={Preprint},
			eprint={https://arxiv.org/abs/1812.09445},
		}		
		
		\bib{Yukawa}{article}{
			author={Yukawa, H.},
			title={On the interaction of elementary particles I},
			journal={Proc. Physico-Math. Soc. Japan},
			volume={17},
			date={1935},
			pages={48--57},
		}
		
		\bib{Zheng}{article}{
			author={Zheng, J.},
			title={Focusing NLS with inverse square potential},
			journal={J. Math. Phys.},
			volume={59},
			date={2018},
			number={11},
			pages={111502, 14},
			issn={0022-2488},
		}

		\end{biblist}
	\end{bibdiv}
	
\end{document}